\theoremstyle{plain}
\newtheorem{theorem}[equation]{Theorem}
\newtheorem{proposition}[equation]{Proposition}
\newtheorem{lemma}[equation]{Lemma}
\newtheorem{corollary}[equation]{Corollary}
\newtheorem{conjecture}[equation]{Conjecture}
\theoremstyle{definition}
\newtheorem{definition}[equation]{Definition}
\theoremstyle{remark}
\newtheorem{remark}[equation]{Remark}
\newtheorem{claim}[equation]{Claim}
\renewcommand{\subsection}{\@startsection{subsection}{2}{0pt}{-3ex
plus -1ex minus -0.2ex}{-2mm plus -0pt minus
-2pt}{\normalfont\bfseries}} \makeatother
\numberwithin{equation}{subsection}
\newcommand{\Lmod}[1]{#1\text{-}{\mathsf{mod}}}
\newcommand{\hdot}{{\:\raisebox{3pt}{\text{\circle*{1.5}}}}}
\newcommand{\idot}{{\:\raisebox{1pt}{\text{\circle*{1.5}}}}}
\newcommand{\dd}[2]{\D_{#1,#2}}
\newcommand{\dul}{{\mathbf{DU}}}
\DeclareMathOperator{\Hom}{\mathrm{Hom}}
\DeclareMathOperator{\ssup}{{\mathrm{supp}}}
\DeclareMathOperator{\gr}{\mathtt{gr}}
\DeclareMathOperator{\Sym}{\mathrm{Sym}}
\newcommand{\supp}{\mathtt{spec}}
\DeclareMathOperator{\vol}{\mathrm{vol}}
\DeclareMathOperator{\Tr}{\mathrm{Tr}}
\DeclareMathOperator{\Lie}{\mathrm{Lie}}
\DeclareMathOperator{\ad}{\mathrm{ad}}
\newcommand{\dis}{\displaystyle}
\newcommand{\erem}{\hfill$\lozenge$\end{remark}}
\newcommand{\beq}{\begin{equation}\label}
\newcommand{\eeq}{\end{equation}}
\newcommand{\en}{\enspace}
\newcommand{\bo}{\mbox{$\bigotimes$}}
\renewcommand{\o}{\otimes}
\newcommand{\wh}{\widehat}
\renewcommand{\max}{{\mathrm{Max}}}
\newcommand{\wt}{\widetilde}
\DeclareMathOperator{\KZ}{\mathrm{KZ}}
\newcommand{\ccat}{{\scr C}_c(\X)}
\newcommand{\dcat}{{\mathbf{D}{\scr C}}_c(\X)}
\newcommand{\dcatchi}{{\mathbf{D}{\scr C}}_{\Th,c}(\X)}
\renewcommand{\th}{{\theta}}
\newcommand{\f}[1]{\mathfrak{#1}}
\newcommand{\scr}[1]{\mathscr{#1}}
\newcommand{\oper}{\operatorname}
\newcommand{\Om}{\Omega }
\DeclareMathOperator{\Spec}{\mathrm{Spec}}
\DeclareMathOperator{\pr}{pr}
\newcommand{\iso}{{\,\stackrel{_\sim}{\rightarrow}\,}}
\def\ccirc{{{}_{^{\,^\circ}}}}
\newcommand{\sminus}{\smallsetminus}
\renewcommand{\mid}{\enspace\big|\enspace}
\newcommand{\bop}{\mbox{$\bigoplus$}}
\newcommand{\too}{\,\,\longrightarrow\,\,}
\newcommand{\mto}{\mapsto}
\newcommand{\onto}{\twoheadrightarrow}
\newcommand{\cd}{\!\cdot\!}
\newcommand{\dash}{\mbox{-}}
\newcommand{\pb}{\noindent$\bullet\quad$\parbox[t]{140mm}}
\def\hp{\hphantom{x}}
\newcommand{\bmu}{{\boldsymbol{\mu}}}
\newcommand{\bnu}{{\boldsymbol{\nu}}}
\newcommand{\Don}{{{\mathbf{D}\mathsf{mon}}}}
\newcommand{\Mon}{{{\mathsf{Mon}}}}
\newcommand{\bff}{{\mathbf{f}}}
\newcommand{\fa}{{\mathfrak a}}
\renewcommand{\lll}{{\scr L}}
\newcommand{\vi}{${\sf {(i)}}\;$}
\newcommand{\vii}{${\sf {(ii)}}\;$}
\newcommand{\viii}{${\sf {(iii)}}\;$}
\newcommand{\syn}{{{\mathbb{S}}_n}}
\newcommand{\g}{{\mathfrak{g}}}
\newcommand{\hc}{{\mathsf{HC}}}
\newcommand{\chh}{{\mathsf{ch}}}
\newcommand{\hhc}{{\mathsf{hc}}}
\def\npb{\noindent$\bullet\quad$\parbox[t]{155mm}}
\newcommand{\dxc}{{\D_{\BX}^c}}
\newcommand{\Ga}{\Gamma }
\newcommand{\yy}{{\mathfrak B}}
\newcommand{\om}{{\varpi}}
\newcommand{\waff}{{W_{^{\oper{aff}}}}}
\newcommand{\inv}{^{-1}}
\newcommand{\oo}{{\mathcal O}}
\renewcommand{\part}{{\f P}}
\newcommand{\sv}{{\mathfrak{s}\mathfrak{l}}}
\newcommand{\arr}{\overset{{\,}_\to}}
\newcommand{\BT}{{\mathbb T}}
\newcommand{\bt}{{\mathfrak t}}
\def\C{{\mathbb{C}}}
\def\V{{\f G}}
\def\sln{{\mathfrak{s}\mathfrak{l}}_n(\C)}
\newcommand{\hh}{{\mathsf{H}}}
\newcommand{\RR}{R^{\!}}
\newcommand{\eps}{\epsilon}
\newcommand{\hcc}{{\Xi}}
\newcommand{\sset}{\subset}
\newcommand{\opp}{{\operatorname{op}}}
\newcommand{\D}{{\scr D}}
\renewcommand{\P}{{\mathbb{P}}}
\newcommand{\BI}{{\mathbb I}}
\newcommand{\X}{{\f X}}
\newcommand{\Th}{{\Theta}}
\newcommand{\into}{{\,\hookrightarrow\,}}
\newcommand{\Ug}{{\mathcal{U}\f g}}
\newcommand{\Z}{{\mathbb{Z}}}
\newcommand{\La}{\Lambda}
\newcommand{\LLa}{{\mathsf{\Lambda}}}
\newcommand{\Id}{\operatorname{Id}}
\newcommand{\FZ}{{\mathfrak{Z}}}
\newcommand{\xreg}{{\X^{\oper{reg}}}}
\newcommand{\treg}{{\BT^{\oper{reg}}}}
\newcommand{\baf}{{B^{\oper{aff}}_n}}
\newcommand{\hec}{{\scr H}_q }
\newcommand{\la}{{\lambda}}
\newcommand{\TV}{{\check\BT}}
\newcommand{\BH}{{\mathbb H}}
\newcommand{\sH}{{\mathsf H}}
\newcommand{\CE}{{\mathcal E}}
\newcommand{\CR}{{\mathcal R}}
\newcommand{\CB}{{\mathcal B}}
\newcommand{\CF}{{\mathcal F}}
\newcommand{\CG}{{\mathcal G}}
\newcommand{\CP}{{\mathcal P}}
\newcommand{\CJ}{{\mathcal V}}
\newcommand{\W}{{Z}}
\newcommand{\LL}{{\scr L}}
\newcommand{\CM}{{\mathcal M}}
\newcommand{\CN}{{\mathcal N}}
\newcommand{\CO}{{\mathcal O}}
\newcommand{\CU}{{\mathcal U}}
\newcommand{\CV}{{\mathcal V}}
\newcommand{\BC}{{\mathbb C}}
\newcommand{\BD}{{\mathbb D}}
\newcommand{\BX}{{\mathfrak X}}
\newcommand{\tB}{{\widetilde{\mathcal B}}}
\newcommand{\ch}{{\mathsf{CH}}}
\newcommand{\HC}{{\mathsf{HC}}}
\newcommand{\lr}{{\mathsf{lr}}}
\newcommand{\fH}{{\mathfrak H}}
\newcommand{\module}{{\operatorname{mod}}}
\newcommand{\Top}{{\operatorname{top}}}
\newcommand{\UU}{{\mathcal U}}
\newcommand{\BU}{{\mathbf U}}
\newcommand{\Ut}{{\mathcal U}\bt }
\begin{document}

\centerline{\Large{\textbf{On mirabolic D-modules}}}
\vskip 5mm

\centerline{\sc{Michael Finkelberg and Victor Ginzburg}}
\vskip 7mm
\centerline{\em To the memory of Israel Moiseevich Gelfand}
\vskip 7mm

\hskip 15mm\parbox[t]{130mm}{{\sc Abstract}.\
\footnotesize{Let  an algebraic group $G$ act
on  $X$, a connected algebraic manifold, with finitely many orbits.
For any Harish-Chandra pair $(\D,G)$ where
$\D$ is a sheaf  of twisted differential
operators on $X$, we form  a left ideal $\D{}^{\,}\g\sset\D$  generated
by the  Lie algebra $\g=\Lie G$.  Then,  $\D/\D{}^{\,}\g$ is a holonomic
$\D$-module, and its restriction to a unique  Zariski open
dense $G$-orbit in $X$ is a $G$-equivariant local system. We prove a 
criterion saying that the $\D$-module
$\D/\D{}^{\,}\g$ is isomorphic, under certain (quite restrictive) conditions,
to a direct image of that local system to $X$.
We apply this criterion in the special case of the group
$G=SL_n$ acting diagonally on $X=\CB\times\CB\times\P^{n-1},$ where
$\CB$ denotes the flag manifold for~$SL_n$.\newline
\hphantom{x}\ We further relate $\D$-modules on $\CB\times\CB\times\P^{n-1}$ 
to  $\D$-modules on the
cartesian product $SL_n\times\P^{n-1}$ via a pair $(\ch, \hc),$ of adjoint functors 
analogous to those used in Lusztig's theory of character sheaves.
A second important result of the paper provides an explicit
description of these functors, showing that the functor
$\hc$ gives an {\em exact} functor on the abelian
category of {\em mirabolic} ~$\D$-modules.}}

\bigskip

\centerline{\sf Table of Contents}
\vskip -1mm

$\hspace{20mm}$ {\footnotesize \parbox[t]{115mm}{
\hp${}_{\!}$\hp1.{ $\;\,$} {\tt Introduction}\newline
\hp2.{ $\;\,$} {\tt A result on holonomic $D$-modules}\newline
\hp3.{ $\;\,$} {\tt A triple flag variety}\newline
\hp4.{ $\;\,$} {\tt Mirabolic $D$-modules}\newline
\hp5.{ $\;\,$} {\tt Mirabolic Harish-Chandra $D$-module}\newline
\hp6.{ $\;\,$} {\tt Further properties of the  mirabolic Harish-Chandra
$D$-module}
}}

\section{Introduction} 
\subsection{} Let $X$ be
a smooth connected algebraic variety and 
 fix  $(\D,G)$, a Harish-Chandra pair  on $X$ in the
sense of \cite{BB3}, \S1.8.3. Thus, $G$ is
  an algebraic group  acting on $X,$ and
 $\D$ is a sheaf of (twisted) differential operators
on $ X$. One also has
a Lie algebra map
$\g:=\Lie G\to \D$ and  $\D{}^{\,}\g\sset\D$, the left
ideal generated by the image of this map.

Assume now that  $\dim X=\dim G$ and that $G$ is a reductive group
acting on $X$ with
finitely
many orbits. Let $U$ be a  unique
Zariski open dense
$G$-orbit in $X$.
Then, the quotient $\CJ:=\D/\D{}^{\,}\g$ is a holonomic $\D$-module on $ X,$
and $\CJ^\circ:=j^*\CJ$, the restriction of $\CJ$ via  the open
imbedding
 $j: U\into  X$,
is  a $G$-equivariant (twisted) local
system on $U$. 
The main result of section 2 (Theorem \ref{bthm})
 says that the $\D$-module
$\D/\D{}^{\,}\g$ is isomorphic, under certain  conditions
on the moment map $T^*X\to \g^*$ and certain bounds
on the roots of an associated $b$-function, cf. Remark \ref{masaki},
to either $j_*\CJ^\circ$ or $j_!\CJ^\circ$,
a direct image of of the local system $\CJ^\circ$ to $X$.

We are mostly interested in the special case where $G=SL_n$, 
and  $X=\CB\times\CB\times \P^{n-1}$.
Here, $\CB$ denotes the flag variety of the group $SL_n$,
so $X$ is a sort of `triple flag
manifold'.
It turns out  that the group $G$
 acts diagonally in   $X$ with
{\em finitely many}  orbits  (all cases of such an `unusual' phenomena
have been classified in \cite{MWZ}). Moreover, the 
 theorem from \S 2 applies in this case.

Our interest in $\D$-modules on  $X=\CB\times\CB\times \P^{n-1}$
is due to their close connection with {\em mirabolic character
$\D$-modules}.
\footnote{The name `mirabolic' comes from the combination
`miraculous parabolic', which is the parabolic 
subgroup $P\sset G$ that fixes a point in $\P^{n-1}$.
This parabolic was first considered in the
work by Gelfand and Kazhdan \cite{GK}, and it has some very 
 specific features. In our situation, considering
$G$-equivariant  $\D$-modules on the space $G\times \P^{n-1}$
is equivalent, essentially, to considering
$P$-equivariant  $\D$-modules on $G$. For this reason, we
use the name `mirabolic'.}
These are certain  $\D$-modules on the space $G\times \P^{n-1}$,
introduced in \cite{GG}, and studied further in \cite{FG}.
The category of mirabolic  $\D$-modules plays an important role in the study of category
$\CO$ for a rational Cherednik algebra, cf. \cite{GG}, \cite{FG},
and references therein.

One of our key  observations is 
 that (a derived version of) the category of mirabolic
$\D$-modules
is related to a (derived) category of  $\D$-modules on the space $\CB\times\CB\times \P^{n-1}$
via a pair $(\ch,\ \hc)$ of adjoint functors.
Here, the {\em character functor} $\ch$ is a `mirabolic counterpart' of
a similar functor used by Lusztig \cite{Lu} in his theory of {\em character
sheaves}
on the group $G$. The functor $\hc$ is a `mirabolic
counterpart' of the  {\em Harish-Chandra} functor considered in \cite{G}.

The second important result of the paper is 
a mirabolic analogue of a result from \cite{BFO}. 
Specifically, part (ii) of our Theorem \ref{hk}
 says
that the functor $\hc$ gives an exact functor
from the {\em abelian} category  of mirabolic
$\D$-modules to an appropriately defined abelian category of 
Harish-Chandra modules.

There is one especially important  mirabolic
$\D$-module, called  {\em Harish-Chandra  $\D$-module}.
To define it, write 
$\D^c$ for the sheaf of twisted differential operators
on $G\times\P^{n-1}$ with twist $c\in\C$ along the second factor.
Let $\Delta: \Lie G\to \D^c$ be the
map induced by the differential of the $G$-diagonal action on $G\times\P^{n-1},$ via
$g:\ (g',l)\mto (gg'g\inv, g(l)),$ and put $\g:=\Delta(\Lie G)\sset \D^c.$
Further, let $\D(G)^{G\times G}$ be the algebra of $G$-{\em bi-invariant}
differential operators on $G$. Thus $\D(G)^{G\times G}$ is a commutative algebra isomorphic
to
the center of $\UU(\Lie G)$, the universal enveloping algebra of the Lie
algebra $\Lie G$. 

Following \cite{GG}, \S 7.4,
for any maximal ideal
$\FZ_\th\sset \D(G)^{G\times G}$, one 
defines an associated {\em mirabolic
Harish-Chandra  $\D$-module} to be, cf. Definition \ref{hcmod} for more details,
\beq{hcmod_intro}
\CG^{\th,c}:=\D^c\big/(\D^c\,\g+
\D^c\, (\FZ_\th\o 1)).
\eeq

This is a $G$-equivariant, holonomic $\D$-module on $G\times \P^{n-1}$,
analogous to a $\D$-module on the group $G$ itself studied by Hotta-Kashiwara in \cite{HK1},
\cite{HK}.
In the last section of the paper, we use the functor $\ch$ and our general result 
from
section 2 to obtain a purely geometric construction of the
perverse sheaf that corresponds to  the mirabolic Harish-Chandra
$\D$-module
via the Riemann-Hilbert correspondence. We also study  Verdier duality
for the mirabolic 
Harish-Chandra
$\D$-module $\CG^{\th,c}$ and give a conjectural description of
the restriction of  $\CG^{\th,c}$ to an open dense subset
of $G\times\P^{n-1}$ (see Conjecture \ref{factor}).

\subsection{Acknowledgements}{\footnotesize
We are very much indebted  to  Roman Bezrukavnikov for communicating to
us his ideas, related to the results of
\cite{BFO}, before they were made public.
We are grateful to Sasha Beilinson for his kind
explanations concerning monodromic $\D$-modules.
We also thank an anonymous referee for his helpful comments. 
M.F. is  grateful to IAS, to the University of Chicago, 
and to Indiana University
at Bloomington for the hospitality and support. The work of M.F. was partially
supported by the RFBR grant 09-01-00242 and the Science Foundation of the
SU-HSE awards No.09-08-0008 and No.09-09-0009.
The work of V.G. was  partially supported by the NSF grant DMS-0601050.}

\section{A result on holonomic $D$-modules}
\subsection{}\label{TDO}
For any smooth variety $X$, write $\Om^p_X$ for the sheaf
of algebraic differential $p$-forms on $X$,
and  $\omega_X$ for the canonical line bundle of top degree differential
forms. Further, let $(\Omega_X^p)_\text{closed}
\sset\Om^p_X$ denote
the subsheaf of closed $p$-forms. 
Abusing the notation, we  write $\omega\in H^2(X,
(\Omega^1_X)_\text{closed})$
for the  Chern class of the canonical bundle $\omega_X$.

We refer the reader to \cite{BB3} for  generalities on
twisted  differential operators (TDO), and to ~\cite{K}, ~\S2, for a
survey of basic
functors on twisted $\D$-modules.

We say that  $\D$, a sheaf of TDO on $X$, is {\em locally trivial}
if, locally in \'etale topology, one
has an isomorphism $\D\cong\D_X$, where $\D_X$ stands for
the sheaf of (nontwisted)  algebraic differential operators  on $X$.
It is known that the sheaves of algebraic locally trivial
TDO are parametrized (up to
isomorphism) by elements of the group 
$H^1_{{\text{\'et}}}(X, (\Omega^1_X)_\text{closed})$, \cite{BB3}.

{\em All TDO considered in this paper are assumed,
without further notice, to be
locally trivial}.
Given  $\D_\chi$, the sheaf of twisted differential
operators  on $X$ associated with a class
$\chi\in H^2(X, (\Omega^1_X)_\text{closed})$, we write $\D_{-\chi}$ for the sheaf of differential
operators with the opposite twisting.

Given an associative algebra, resp. sheaf of 
algebras, $A$, write $A^\opp$ for the
opposite algebra. 
Thus, for any TDO $\D_\chi$,
one has the sheaf $\D_\chi^\opp:=(\D_\chi)^\opp$.
There is a canonical isomorphism of TDO's
$\D_\chi^\opp\cong\D_{\omega-\chi}$,
see ~\cite{K}, 2.7.

We will use the notion of {\em regular singularities}
for modules over (locally trivial) TDO.
Let $\D$ be such a  TDO on $X$ and
let $j: U\into X$ be an open imbedding.
Let $M$ be a holonomic $j^*\D$-module. Locally in
\'etale topology, one may view $j_*M$ as
a module over $\D_X$, the sheaf of nontwisted differential
operators on $X$.
We say that $M$ is {\em regular
at a point} $x\in X$ if the following holds:\
{\em For any smooth  curve $C\sset X$ containing $x$,
the restriction of $j_*M$ to $C$ is either supported at
the point $x$, or is smooth
at $x$, or else it has a regular singularity at $x$.}

\subsection{}
Let  a connected algebraic group $G$  act on $X$, a smooth 
connected variety.
 Let $U\sset X$ be a (unique) Zariski open dense $G$-orbit,
and assume
$X\sminus U$ is a  hypersurface,
not necessarily irreducible, in general.

Let ${\scr L}$  be a  $G$-linearization
of the line bundle $\CO_X(Y)$ where
$Y$ is a divisor with support equal to that
hypersurface. Thus, $\LL$ is a $G$-equivariant line bundle
 on $X$ and there exists a  regular section
 $s$, of $\LL,$ such that $U=X\sminus s\inv(0).$
The following result is well known.

\begin{lemma}\label{lemphi} There exists a   group homomorphism  $\phi: G\to\C^\times$
and a  $\phi$-semi-invariant regular section
 $s\in\Gamma(X,\LL),$ 
such that 
\begin{equation}\label{phi}
U=X\sminus  s\inv(0)\en\;\operatorname{and,\en we\en have}
\quad g^*(s)=\phi(g)\cdot s,\qquad\forall g\in G.
\end{equation}
\end{lemma}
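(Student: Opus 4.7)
Let $s$ be the canonical section of $\LL=\CO_X(Y)$ whose divisor equals $Y$; by hypothesis $s\inv(0) = X\sminus U$. The aim is to show that this particular section $s$ is already semi-invariant for some character $\phi$ of $G$. The strategy has three steps: show that $G$ preserves $Y$ as a divisor; use the equivariance datum on $\LL$ to produce a regular, nowhere-vanishing function on $G\times X$ that measures the failure of $s$ to be $G$-invariant; then apply Rosenlicht's theorem on units of a product to conclude this function depends only on $g\in G$.

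Since $G$ is connected and acts on the finite set of irreducible components of the hypersurface $X\sminus U$, it fixes each component setwise, so $g^*Y=Y$ as a Weil divisor for every $g\in G$. Consequently $g^*s$ and $s$ are two sections of $\LL$ with the same divisor, so their ratio extends uniquely from $U$ to a regular, nowhere-vanishing function on $X$. To globalize this, consider the action map $a\colon G\times X\to X$ and the projection $p_2\colon G\times X\to X$. The $G$-equivariance datum on $\LL$ is an isomorphism $a^*\LL\iso p_2^*\LL$, normalized so that its restriction to $\{e\}\times X$ is the identity. Under this isomorphism both $a^*s$ and $p_2^*s$ become sections of $p_2^*\LL$; by the preceding observation they share the divisor $G\times Y$, so their ratio $F:=a^*s/p_2^*s$ is a unit in $\CO(G\times X)$.

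Now invoke Rosenlicht's theorem applied to the connected varieties $G$ and $X$: any such unit decomposes as $F(g,x) = c\cdot u(g)\cdot v(x)$ with $u\in\CO(G)^\times$, $v\in\CO(X)^\times$, $c\in\C^\times$. The normalization $F(e,x)\equiv 1$ forces $v$ to be a constant, and so $F(g,x) = \phi(g)$ depends only on $g$, defining a morphism $\phi\colon G\to\C^\times$ with $\phi(e)=1$. The cocycle condition on the equivariance, namely $(g_1 g_2)^*s = g_2^*(g_1^*s)$, then translates, once $F$ is known to be independent of $x$, into $\phi(g_1 g_2) = \phi(g_1)\phi(g_2)$, so that $\phi$ is a character of $G$ and the identity $g^*s = \phi(g)\cdot s$ holds on all of $G\times X$. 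Together with $s\inv(0) = X\sminus U$ this is the asserted statement. The only substantive external input is Rosenlicht's theorem; the main delicate point is keeping track of pullback conventions so that the normalization $F|_{\{e\}\times X}\equiv 1$ genuinely holds.
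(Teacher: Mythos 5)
Your proof is correct, and it takes a genuinely different route from the paper's. The paper begins with \emph{any} regular section $s$ with zero locus $X\sminus U$, forms the algebraic $1$-cocycle $g\mapsto (g^*s)\cdot s^{-1}$ valued in $\CO(X)^\times$, and cites Proposition~5.1 of Knop--Kraft--Vust (\cite{KKV}) to the effect that every class in $H^1_{\text{alg}}(G,\CO(X)^\times)$ is represented by a character of the connected group $G$; it then corrects $s$ by the resulting unit $f\in\CO(X)^\times$ to obtain a semi-invariant section $f^{-1}s$. You instead work with the specific section of $\CO_X(Y)$ cutting out the divisor $Y$, globalize the cocycle into a single unit $F=a^*s/p_2^*s\in\CO(G\times X)^\times$, and apply Rosenlicht's unit theorem on products together with the normalization $F|_{\{e\}\times X}\equiv 1$ to conclude $F$ depends on $g$ alone. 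Your approach buys two things: it is essentially self-contained (Rosenlicht's theorem rather than the more specialized \cite{KKV} reference, the latter being itself usually deduced from Rosenlicht anyway), and it yields the slightly sharper conclusion that the canonical section with divisor $Y$ is \emph{already} semi-invariant, with no correction by a unit needed. You also make explicit the connectivity argument (that $G$, being connected, fixes each irreducible component of $X\sminus U$, hence $g^*Y=Y$ as divisors) which the paper's first sentence implicitly requires but does not spell out. The one point worth stating with a touch more care is that $a^{-1}(Y)=p_2^{-1}(Y)$ as Weil divisors (not merely set-theoretically), which is exactly what the $G$-stability of each component gives you, and which is what makes $F$ a genuine unit rather than just a nowhere-vanishing rational function.
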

\begin{proof} Let $s\in\Gamma(X,\LL)$ be any
 regular section  such that $U=X\sminus s\inv(0).$
Then, for any $g\in G$, we have
$(g^*s)\cdot s\inv\in\CO(X)^\times,$ is an invertible
regular function on $X$. The assignment
$g\mto (g^*s)\cdot s\inv$ gives an 
algebraic cocycle that defines a
cohomology class in $H^1_\text{alg}(G,\,\CO(X)^\times).$

Now, since $G$ is connected, from \cite{KKV},
Proposition 5.1, we deduce that any cohomology class in
  $H^1_\text{alg}(G,\,\CO(X)^\times),$ may be represented by the class of
 a character $\phi: G\to\C^\times.$
Therefore, there exists  a character $\phi$ and an invertible
function $f\in \CO(X)^\times$ such that we have
$(g^*s)\cdot s\inv=\phi(g)\cdot (g^*f)\cdot f\inv,$
for any $g\in G$. It follows that
$f\inv\cdot s$ is a $\phi$-semi-invariant section
with the required properties.
\end{proof}

 For any $x\in X,$ we let $G_x\sset G$ denote the isotropy
group of the point $x$. Write  $\g:=\Lie G$, resp. $\g_x:=\Lie G_x,\,\forall x\in X.$
The  $G$-equivariant structure on $\LL$
gives rise,
for any $x\in X,$ to  a group homomorphism $\chi_{\LL,x}:
G_x\to\C^\times$,
induced by the $G_x$-action in the fiber of $\LL$ at $x$.
Observe that, for any $x\in U$, we have $\phi|_{_{G_x}}=\chi_{\LL,x}.$
We keep the same notation, $\phi$ and $\chi_{\LL,x}$, for
  Lie algebra homomorphisms
corresponding to the group characters introduced above.

\subsection{} \label{psi}
Let $(\D, G)$ be a Harish-Chandra algebra on $X$, 
in the sense of \cite{BB3}, \S1.8.3; Put $\g:=\Lie G,$ and
let $\g\to \D,$
$u\mto\arr{u}$ be the corresponding
Lie algebra  homomorphism.

Given a $G$-equivariant
line bundle  $\LL$ on $X$ we introduce the following notation
$$\D_k:=\LL^{\o k}\o_{\CO_X}\D\o_{\CO_X}\LL^{\o (-k)},
\quad\CF(k):=\LL^{\o k}\o_{\CO_X}\CF,
\quad k\in\Z,
$$
for any $\CO_X$-module $\CF$.
It is clear that, for any $k\in\Z$, the pair $(\D_k, G)$ has a
structure of Harish-Chandra
algebra as well, in particular,
there is  a Lie algebra map $\g\to\D_k,\, u\mto\arr{u}.$

One also has
$(\D_k)^\opp$,
an opposite TDO, and there is a
natural isomorphism $(\D_k)^\opp\cong (\D^\opp)_{-k}$.
From now on, we will write $\D^\opp_{-k}:=(\D^\opp)_{-k}$.
For any  $\D$-module $\CF$, the sheaf
$\CF(k)$ has a natural $\D_k$-module structure.
Thus, a right $\D_k$-module is the same thing
as a left $(\D^\opp_{-k})$-module.

Given a 1-dimensional character $\psi: \g \to\C$,
let $\g^\psi\sset \D$ denote the image of $\g$
under the  Lie algebra morphism $u\mto\arr{u}-\psi(u)\cdot 1.$
Let $\D{}^{\,}\g^\psi\sset\D$ be the left ideal generated by
$\g^\psi$, 
and let  $\D/\D{}^{\,}\g^\psi$, be the corresponding left $\D$-module on~$X$.

Let  $U\sset X$ be a Zariski open dense $G$-orbit and
write $\jmath: U\into X$ for the  imbedding.
Fix  $k\in \Z$, and let
 $\jmath^*(\D_k/\D_k{}^{\,}\g^\psi),$ be
the restriction of
the $\D_k$-module $\D_k/\D_k{}^{\,}\g^\psi$ to $U$. It is clear that
the $G$-action on $\D_k$ makes $\jmath^*(\D_k/\D_k{}^{\,}\g^\psi)$
a $G$-equivariant locally free coherent
$\CO_U$-module. 
Furthermore, if  $\psi|_{\g_x}\ne0$ for some $x\in U$ then, one must
have $\jmath^*(\D_k/\D_k{}^{\,}\g^\psi)=0.$
Otherwise, $\jmath^*(\D_k/\D_k{}^{\,}\g^\psi)$ is a
line
bundle on $U$.

Let $\delta: \g\to\C,\, u\mto \Tr\ad u$ be the {\em modular character}
of the Lie algebra $\g$.

\begin{theorem}\label{bthm} Let a  connected {\em reductive}
 group $G$ act on a smooth variety $X$, such  that  $\dim X=\dim G$,
  with finitely many orbits. Let $(\D,G)$ be a
Harish-Chandra pair, where $\D$ is a
{\em locally trivial} TDO on $X$.
Then,   for any character
  $\psi:\g\to\C$ and any
$k\in\Z$, one has

\vi The   $\D_k$-module $\,\D_k/\D_k{}^{\,}\g^\psi$
is   holonomic  and regular at any point $x\in X$, cf. \S\ref{TDO};
furthermore,
there is a natural
isomorphism
$$R{{\scr H}_{\!}om}_{\D_k}(\D_k/\D_k{}^{\,}\g^\psi,\D_k)\cong
\D^\opp_{-k}/\D^\opp_{-k}{}^{\,}\g^{\delta-\psi}[-\dim X].
$$

\vii Let $U\sset X$ be a Zariski open dense $G$-orbit in $X$,
 let $\phi:G\to\C^\times$ be as in Lemma \eqref{lemphi} and
assume, in addition, that 
\begin{equation}\label{match}
\chi_{\LL,x}\neq \phi|_{\g_x},
\qquad\forall x\in X\sminus U.
\end{equation}

Then, one has canonical isomorphisms:
\begin{eqnarray}\label{concl_thm}
&\D_{k}/\D_{k}{}^{\,}\g^{\psi+{k}\cdot\phi}\ \iso \ \jmath_*\jmath^*(\D_{k}/\D_{k}
{}^{\,}\g^{\psi+k\cdot\phi}),\quad&\oper{for}\en k\ll0;\\
&\jmath_!\jmath^*(\D_{k}/\D_k{}^{\,}\g^{\psi+k\cdot\phi})\ \iso\ \D_{k}/\D_{k}{}^{\,}\g^{\psi+k\cdot\phi}
,\quad&\oper{for}\en k\gg0.\nonumber
\end{eqnarray}
\end{theorem}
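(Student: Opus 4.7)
For holonomicity, I note that the principal symbols of the generators $\arr u-\psi(u)$ of the ideal $\D_k{}^{\,}\g^\psi$ are exactly the components of the moment map $\mu\colon T^*X\to\g^*$, so $\sing(\D_k/\D_k{}^{\,}\g^\psi)\sset\mu^{-1}(0)$. Because $G$ acts with finitely many orbits and $\dim X=\dim G$, the zero fibre $\mu^{-1}(0)$ is the union of the conormal bundles to the orbits, a Lagrangian subvariety of $T^*X$; holonomicity follows. Regularity at every point is a consequence of $G$-equivariance combined with the finite-orbit hypothesis: every cohomology sheaf is $G$-equivariant, hence constant-rank along each orbit, and the orbit stratification is finite. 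The duality formula is a Koszul calculation: the Chevalley--Eilenberg complex $\D_k\otimes_\C\wedge^\bullet\g$ presents $M:=\D_k/\D_k{}^{\,}\g^\psi$, and applying $R{\scr H}\!om_{\D_k}(-,\D_k)$ yields the dual Koszul complex, whose only nonzero cohomology lies in top degree $\dim\g=\dim X$; the identification $\wedge^{\dim\g}\g^*\cong\C_{-\delta}$ as a one-dimensional $\g$-module accounts for the replacement of $\psi$ by $\delta-\psi$ and for the shift $[-\dim X]$.

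\textbf{Part (ii).} Since $G$ is reductive, $\delta=0$, and part~(i) identifies $\DD(\D_k/\D_k{}^{\,}\g^{\psi+k\phi})$ (up to cohomological shift) with $\D_{-k}/\D_{-k}{}^{\,}\g^{-\psi-k\phi}$. Because $\DD\circ\jmath_!\cong\jmath_*\circ\DD$, the two isomorphisms in \eqref{concl_thm} are Verdier dual under $(k,\psi)\mapsto(-k,-\psi)$, so it suffices to prove one of them — say $\jmath_!\jmath^*M_k\iso M_k$ for $k\gg 0$, where $M_k:=\D_k/\D_k{}^{\,}\g^{\psi+k\phi}$. This reduces to showing $\iota_O^*M_k=0$ for every non-open $G$-orbit $\iota_O\colon O\into X\sminus U$. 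Fix such an $O$ and a point $x\in O$ with stabilizer $\g_x$; $G$-equivariance of the twisted structure on $\D_k=\LL^k\o\D\o\LL^{-k}$ forces $\g_x$ to act on the fibre $M_k|_x$ by the scalar character
$$u\mto k(\chi_{\LL,x}-\phi)(u)-\psi(u),\qquad u\in\g_x.$$
Hypothesis \eqref{match} says $\chi_{\LL,x}-\phi|_{\g_x}\neq 0$, so this is a nontrivial affine function of $k$ which vanishes for at most one value of $k$; hence for all but finitely many $k$ the naive fibre at every boundary orbit is zero.

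\textbf{Main obstacle.} The genuinely non-formal step is upgrading the pointwise character vanishing to the vanishing of the derived restriction $\iota_O^*M_k$. My plan is to apply Luna's \'etale slice theorem at $x\in O$ to reduce to a $G_x$-equivariant $\D$-module on a linear transverse slice, then to analyze the monodromy of $M_k$ in normal coordinates to $O$ via a Bernstein--Sato $b$-function attached to the semi-invariant defining equation $s$ supplied by Lemma~\ref{lemphi}. This $b$-function has finitely many roots, and condition \eqref{match} guarantees that the effective monodromy parameter $k(\chi_{\LL,x}-\phi)-\psi$ leaves the (finite, orbit-dependent) root set as $k\to+\infty$, producing a uniform threshold $k_0$ — simultaneously valid for each of the finitely many non-open orbits — beyond which $M_k$ coincides with $\jmath_!\jmath^*\jmath^*M_k$. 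Controlling these $b$-function roots and their alignment with the shift by $k\cdot\phi$ is the central technical difficulty of the argument.
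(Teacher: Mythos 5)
Your sketch of part~(i) gets the holonomicity and the shape of the duality computation right, but two points are glossed over in a way that loses real content. First, the Chevalley--Eilenberg complex $\D_k\otimes_\C\wedge^\bullet\g$ is a \emph{resolution} of $\D_k/\D_k{}^{\,}\g^\psi$ only because the symbols of the generators form a regular sequence in $\gr\D_k$, equivalently because the moment map $\mu\colon T^*X\to\g^*$ is flat; this flatness is exactly where the hypotheses $\dim X=\dim G$ and the finiteness of the orbit set enter (Lemma~\ref{muflat}). Without that input your Koszul argument only produces a complex with the right Euler characteristic, not the claimed duality. Second, the regularity claim cannot be proved by the ``$G$-equivariant $\Rightarrow$ constant rank along orbits'' slogan: the module $\D_k/\D_k{}^{\,}\g^\psi$ is only weakly $G$-equivariant (it is $(G,\psi)$-monodromic), and even for genuinely equivariant modules constant rank along strata does not by itself give regular singularities. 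The paper's Lemma~\ref{mon} is doing real work here: it first checks that $\CO_G^\psi$ is regular on $G$ (this is where reductivity is used, via reduction to a torus), then runs an induction on the finitely many orbits using long exact sequences, following the scheme of \cite[Thm.~11.6.1]{HTT}.

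For part~(ii), you correctly identify $\delta=0$, correctly observe the Verdier duality between the two statements, and you correctly identify what the character $k(\chi_{\LL,x}-\phi)-\psi$ is doing pointwise. But as you yourself say, the passage from the naive fibre computation to the vanishing of the derived restriction along boundary orbits is precisely the hard step, and your proposal (Luna slices plus a $b$-function analysis of $s$ in normal coordinates, uniformized over the finitely many boundary orbits) is left as a plan, not a proof. In fact, the condition you would actually need is not $\iota_O^*M_k=0$ but the absence of sub/quotient modules supported on $X\sminus U$, which a fibre computation alone does not control. The paper's proof of the corresponding Proposition~\ref{thm_cycle}(ii) takes a substantially different and self-contained route: (a) a Kashiwara-style $b$-function argument (Lemma~\ref{bfunction}, Lemma~\ref{zabyl}) shows that for $k\ll0$ the canonical map $\D_k/\D_k{}^{\,}\g^{\psi+k\phi}\to\jmath_*\CE_k$ is \emph{surjective}; (b) one compares characteristic cycles (Lemma~\ref{SS}), using on one side $[\mu^{-1}(0)]$ coming from the commutative Koszul complex and on the other side Ginzburg's limit formula for $[SS(\jmath_*\CE_k)]$; (c) the hypothesis \eqref{match} is used through Lemma~\ref{cond_lem}(iii) to guarantee that $\bmu^{-1}(\phi)\cap\bnu^{-1}(1)=\Lambda$ is irreducible, which forces the two characteristic cycles to coincide; hence the kernel of the surjection in (a) has vanishing characteristic cycle and is zero. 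You should either execute your slice/$b$-function program in detail (which will likely recreate the same combination of ingredients) or adopt the characteristic-cycle strategy; as written the central step of part~(ii) is missing.
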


It would be interesting to apply this theorem to
spherical varieties, and also to some prehomogeneous
vector spaces.


The rest of this section is devoted to the proof of Theorem \ref{bthm}.

\subsection{Geometry of the moment map.}
\label{m-sec}
 In this section, we prove a few geometric
results which will enable us to reduce the proof of Theorem \ref{bthm}
to  Proposition \ref{thm_cycle} below.

Given an arbitrary smooth $G$-variety $X$ and
 a $G$-equivariant line bundle $\LL$, on $X$,
write  $\LL^*$ for the dual  line bundle.
Let $L\to X$ be  a principal $\C^\times$-bundle obtained by removing the
zero section in the total space of the line bundle $\LL^*$.
The  $\C^\times$-action on $L$  commutes with the
natural $G$-action, hence makes $L$ a  $G\times\C^\times$-variety.

Let $T^*L$ be the total space of the cotangent bundle on
$L$. One has a natural
 Hamiltonian  $G\times\C^\times$-action  on $T^*L,$
with  moment map $T^*L \to \g^*\times(\Lie\C^\times)^*$.
Observe that the action of the second factor $\C^\times\sset G\times\C^\times$
on $T^*L$ is free.
 Thus, the quotient $\W:=(T^*L)/\C^\times$ is a smooth variety,
and the moment map above descends to the quotient. The resulting map may be written
in the form of a cartesian product of two maps
$\bmu\times\bnu: \W \to \g^*\times\C$.

It is well known that
 the map $\bnu: \W\to\C$ is a  smooth morphism,
and for each $a\in\C$, the   fiber $\bnu\inv(a)$ is a
 symplectic manifold. Furthermore, for $a=0$, one has a canonical
isomorphism $\bnu\inv(0)\cong T^*X,$ such that
the restriction of the map $\bmu$ to $\bnu\inv(0)$
may be identified with 
$$\mu=\bmu|_{\bnu\inv(0)}:\ \bnu\inv(0)=T^*X\to\g^*,$$
the moment map for the  natural Hamiltonian
 $G$-action on $T^*X$.

There is also a (non-Hamiltonian) $\C^\times$-action on 
$T^*L$,  by dilations along the fibers.
This action, to be referred to as  `dot-action',
 descends to a  $\C^\times$-action $\C^\times\ni a: z\mto a\cdot z,$ on 
$\W=(T^*L)/\C^\times$, the quotient of
$T^*L$ by the Hamiltonian   $\C^\times$-action. There is also
a  `dot-action' on  $\g^*\times\C$, defined as the
 $\C^\times$-diagonal action  by dilations. The dot-actions of $\C^\times$
on $\W$ and on $\g^*\times\C$ are both
{\em contractions}, and the map
$\bmu\times\bnu: \W\to\g^*\times\C$ is clearly dot-equivariant.

Let $s\in\Gamma(X,\LL)$ be a $\phi$-semi-invariant section,
so that \eqref{phi} holds. Put
$U:=X\sminus s\inv(0)$, and let $\wt U$ denote the preimage of $U$ in
$L$, the total space of $\LL^*$.

The
section $s$ may (and will) be viewed as a regular
function on $L$. The graph of the 
 closed 1-form $d\log s=s\inv\cdot ds$ may be
viewed as a section of $T^*L$
over $\wt U$. 
Let $\La\sset \W$ be the image 
of the graph of $d\log s$ under the projection $T^*L\onto (T^*L)/\C^\times=\W.$

\begin{lemma}\label{cond_lem} Let 
$U \sset X$ be a Zariski open
$G$-orbit such that \eqref{phi} holds. Then,

\vi The set $\Lambda$ is a smooth, closed Lagrangian
submanifold of the symplectic manifold $\bnu\inv(1)$. Furthermore, we have
$$\Lambda\sset\bmu\inv(\phi)\cap\bnu\inv(1).
$$

\vii Assume also  that the following conditions \eqref{item1}-\eqref{item3}  hold:
\begin{align}
&
\text{$\bullet\en$ The intersection $\bmu\inv(\phi)\cap\bnu\inv(1)$ 
is reduced at any point of $\Lambda$;}
\label{item1}\\
&
\text{$\bullet\en$ The group $G$ acts on $X$ 
with finitely many orbits.}\label{item3}
\end{align}
Then, 
$\Lambda$ is  an irreducible component of the scheme
$\bmu\inv(\phi)\cap\bnu\inv(1)$.
\vskip 3pt

\viii If, in addition, condition~\eqref{match} holds
then $\bmu^{-1}(\phi)\cap\bnu^{-1}(1)=\La,$ is an irreducible scheme.
\end{lemma}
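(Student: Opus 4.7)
For part (i), my approach is to assemble three standard symplectic-geometry observations. First, the graph of any closed $1$-form in a cotangent bundle is automatically Lagrangian, so the graph $\Gamma \sset T^*L|_{\wt U}$ of $d\log s$ is a Lagrangian submanifold. Second, since $s$ has weight $1$ under the $\C^\times$-action on $L$, the form $d\log s$ is $\C^\times$-invariant with $d\log s(\partial_t) \equiv 1$; thus $\Gamma$ is $\C^\times$-stable, sits inside $\mu_{\C^\times}^{-1}(1)$, and descends under the free $\C^\times$-quotient to a Lagrangian $\Lambda$ in the symplectic manifold $\bnu^{-1}(1)$. Third, the $\phi$-semi-invariance of $s$ translates into $\hat u(s) = \phi(u)\,s$, hence $d\log s(\hat u) = \phi(u)$ for every $u \in \g$, placing $\Gamma$ in $\mu_G^{-1}(\phi)$ and thus $\Lambda$ in $\bmu^{-1}(\phi)$. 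Smoothness of $\Lambda$ follows from the isomorphism $\Lambda \iso U$ induced by the natural projection $\W \to X$; closedness can be verified in a local $\C^\times$-trivialization $L|_V \cong \C^\times \times V$, where $\Lambda$ becomes the graph of $d\log s_V$, a closed subset of $T^*V$ because $d\log s_V$ has poles along $V \sminus U$.

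For part (ii), the plan is to get a matching dimension bound on the scheme-theoretic intersection $\bmu^{-1}(\phi) \cap \bnu^{-1}(1)$ near $\Lambda$: combined with smoothness and connectedness of $\Lambda$ and the reducedness condition \eqref{item1}, this forces $\Lambda$ to be an irreducible component. Since $\C^\times$ acts freely on the fibers of $L \to X$, the orbits of $G \times \C^\times$ on $L$ are in bijection with the $G$-orbits on $X$, so \eqref{item3} yields finitely many $G \times \C^\times$-orbits on $L$. I then invoke the standard principle (a consequence of the conormal-bundle description showing $\mu^{-1}(0, 0)$ is Lagrangian of pure dimension $\dim L$, combined with $\C^\times$-homogeneity of the moment map and upper semi-continuity of fiber dimension) that in this setting every non-empty fiber of $\mu_{G\times\C^\times}: T^*L \to \g^* \times \C$ has pure dimension $\dim L = \dim X + 1$. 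Dividing by the free $\C^\times$, this gives pure dimension $\dim X$ for $\bmu^{-1}(\phi) \cap \bnu^{-1}(1) \subset \W$, matching $\dim \Lambda$.

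For part (iii), the remaining task is to exclude any component not equal to $\Lambda$. My plan is to treat the preimage of $U$ and the preimage of $X \sminus U$ separately. Over $U$, the moment map conditions determine $\xi$ uniquely: the map $\g \oplus \C \to T_\ell L$ given by $(u, c) \mapsto \hat u(\ell) + c\,\partial_t|_\ell$ is an isomorphism (injectivity comes from discreteness of $G_x$ for $x \in U$, surjectivity from the dimension count $\dim \g + 1 = \dim L$), and the values $\bmu = \phi$, $\bnu = 1$ pin $\xi$ down on this spanning set, so $d\log s|_\ell$ is the unique solution. Over a non-open orbit through $x$, for any $u \in \g_x$ the vector $\hat u(\ell)$ is vertical in $L|_x$ and proportional to $\partial_t|_\ell$ by a constant encoding $\chi_{\LL,x}(u)$, so the moment map equations reduce to the requirement that $\phi|_{\g_x}$ and $\chi_{\LL,x}$ coincide on $\g_x$ (the identity automatically holding over $U$). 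Assumption \eqref{match} states precisely that this identity fails outside $U$, so the intersection is set-theoretically contained in the preimage of $U$ and therefore equals $\Lambda$; the reducedness \eqref{item1} promotes this to scheme-theoretic equality, and $\Lambda \cong U$ is irreducible as a connected smooth variety.

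The main obstacle is the equidimensionality claim for $\mu_{G \times \C^\times}$ used in part (ii), which is the one genuinely geometric input; parts (i) and (iii) amount to careful bookkeeping with moment map formulas and an orbit-by-orbit analysis once the sign convention making $\phi|_{\g_x} = \chi_{\LL,x}$ hold over $U$ is respected.
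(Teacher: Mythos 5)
Your proof follows essentially the same route as the paper for all three parts, and the overall structure is sound. Part (i) matches the paper: graph of a closed $1$-form is Lagrangian, homogeneity of $s$ places the graph over $\bnu^{-1}(1)$ and makes it $\C^\times$-stable, and $\phi$-semi-invariance places it over $\bmu^{-1}(\phi)$. Part (ii) matches the paper's conormal-bundle dimension count combined with the contracting dot-action and the reducedness hypothesis. Part (iii) spells out more explicitly what the paper delegates to the reader (that the moment conditions force $\chi_{\LL,x}=\phi|_{\g_x}$ at any $x$ in the image of the intersection), which is a welcome expansion.

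Two places are stated more strongly than can actually be justified (and more strongly than is needed), and you should soften them. First, in part (ii) you assert that \emph{every} nonempty fiber of $\mu_{G\times\C^\times}$ has \emph{pure} dimension $\dim L$. Upper semi-continuity and the conormal-bundle calculation only give the upper bound $\dim(\text{fiber})\leq\dim L$; purity of arbitrary fibers would require flatness, which in this generality is not available (the paper only establishes flatness later, in Lemma \ref{muflat}, under the extra hypothesis $\dim X=\dim G$). Fortunately you only use the upper bound: since $\Lambda\sset\bmu^{-1}(\phi)\cap\bnu^{-1}(1)$ has dimension $\dim X$, the intersection has dimension exactly $\dim X$, and the reducedness hypothesis \eqref{item1} then makes the closed irreducible $\Lambda$ a component. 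Second, in part (iii) you claim the map $\g\oplus\C\to T_\ell L$, $(u,c)\mto\hat u(\ell)+c\,\partial_t|_\ell$, is an \emph{isomorphism}, citing discreteness of $G_x$ and the count $\dim\g+1=\dim L$. Lemma \ref{cond_lem} does not assume $\dim X=\dim G$, so $G_x$ need not be finite for $x\in U$ and the map need not be injective. What is true without that assumption — and what you actually need — is \emph{surjectivity}: the image is the tangent space to the $G\times\C^\times$-orbit of $\ell$, which is open in $L$ because $G\cdot x=U$ is already open in $X$ and the $\C^\times$-direction fills the fiber. Surjectivity alone forces the linear functional $\xi$ to be uniquely determined by the moment conditions, which is the content you use. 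With these two adjustments your argument is complete and agrees with the paper's.
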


\begin{proof} To prove (i) we observe that the
function $s$ on $L$ associated with any
 section  of $\LL$ is a degree 1 homogeneous function,
that is, we have $s(a\cdot \ell)=a\cdot s(\ell),$
for any $a\in\C^\times,\,\ell\in L$. For such a function,
the graph of $d\log s$ is  stable under the Hamiltonian
$\C^\times$-action on 
$T^*L$ and is contained in the fiber of the moment map over
the subset
$\g^*\times \{1\}\sset\g^*\times \C$.
Trivializing the line bundle $\LL$ locally,
one sees  (cf. \cite[\S5]{K}) that this graph is
 a smooth and
closed  subvariety of $T^*L$.
It follows that $\Lambda$, the quotient  of the graph 
by a free $\C^\times$-action, is  a smooth and
closed  subvariety of  $\bnu\inv(1)$.

Observe further, for a $\phi$-semi-invariant section
of $\LL$, one has $\arr{u}(s)=\phi(u)\cdot s,\,
\forall u\in\g$. This equation yields 
$\Lambda\sset \bmu\inv(\phi),$
and part (i) follows.

To prove (ii), let $S$ denote the {\em finite} set of all $G$-orbits in $X$.
It is well known that,  set-theoretically, we have that $
\mu\inv(0)=
\bigsqcup_{O\in S}\ T^*_{O}X,$ is 
the union  of the conormal bundles to $G$-orbits. Thus, we compute
\begin{equation}\label{dimension}
\dim\big(\bmu\inv(0)\cap\bnu\inv(0)\big)=
\dim\mu\inv(0)=
\dim\big(\bigsqcup_{O\in S} T^*_{O}X\big)=
\mbox{$\frac{1}{2}$}\cd\dim T^*X=\dim X.
\end{equation}

Moreover, the above shows that the dimension of
{\em any} irreducible component of the intersection
$\bmu^{-1}(0)\cap\bnu^{-1}(0)$ equals $\dim X$.
Further, the dot-action being a contraction,
we deduce an inequality
$\dim[\bmu^{-1}(\phi)\cap\bnu^{-1}(1)]\leq\dim[\bmu^{-1}(0)\cap\bnu^{-1}(0)]$.
Thus, using part (i), we obtain
$$\dim \Lambda\leq\dim[\bmu^{-1}(\phi)\cap\bnu^{-1}(1)]\leq
\dim[\bmu^{-1}(0)\cap\bnu^{-1}(0)]
=\dim X=\dim\Lambda.
$$
Thus, we have $\dim  \Lambda=\dim[\bmu^{-1}(1)\cap\bnu^{-1}(1)]$,
and part (ii) of the lemma follows from \eqref{item1}.

To prove (iii), consider the following diagram
\begin{equation}\label{diagram}
\xymatrix{
\bmu\inv(\C\cd\phi)\cap\bnu\inv(1)\;\ar[d]_<>(.5){\bmu}
\ar@{^{(}->}[rr]&&\,\bnu\inv(1)\ar@{->>}^<>(.5){\pr_{_X}}[drr]\ar[d]_<>(.5){\bmu}&&\\
\C\cd\phi\;\ar@{^{(}->}[rr]
&&\,\g^*&&\quad X\supset U.
}
\end{equation}
Part (ii)  of the lemma implies that we have
$\pr_X\inv(U)\cap\bmu^{-1}(\phi)\cap\bnu^{-1}(1)=\Lambda.$

We leave to the reader to verify that,
 for any point $x\in \pr_X(\bmu\inv(\phi)\cap\bnu\inv(1)),$
one must have $\chi_{\LL,x}= \phi|_{\g_x}$.
Hence, condition \eqref{match} insures that
$\pr_X(\bmu^{-1}(\phi)\cap\bnu^{-1}(1))\sset U$, and
(iii) is proved.
\end{proof}

\begin{lemma}
\label{muflat} 
Let the group
$G$ act on  $X$ with finitely many orbits. Then,
we have $\dim\mu\inv(0)=\dim X$. 
If, in addition,
 $\dim X=\dim G$, then the following holds:
\vskip 2pt

\vi Each of the two  maps $\mu: T^*X\to\g^*$ and  
$\bmu\times\bnu: \W\to\g^*\times\C$ is flat;

\vii Any fiber of the moment map $\mu$
is a complete intersection in $T^*X$;

\viii The two conditions of Lemma \ref{cond_lem}(ii) hold.

\end{lemma}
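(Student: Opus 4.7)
The plan is to establish the initial dimension estimate from the orbit decomposition, then promote it to flatness of both moment maps via a $\C^\times$-contraction argument plus miracle flatness, and finally read off parts (ii) and (iii).

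For the opening claim, I would invoke the set-theoretic identity $\mu^{-1}(0) = \bigsqcup_{O \in S} T^*_O X$ already used in \eqref{dimension}: since each conormal bundle has dimension $\dim X$ and $S$ is finite, $\dim \mu^{-1}(0) = \dim X$ immediately.

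Granting $\dim X = \dim G$, I would prove (i) by miracle flatness. Both $T^*X$ and $\g^*$ are smooth, so it suffices to show that the fibers of $\mu$ are equidimensional of dimension $\dim T^*X - \dim \g^* = \dim X$. The $\C^\times$-equivariance of $\mu$ (fiber dilations on $T^*X$ matched with scalar multiplication on $\g^*$) yields $\mu^{-1}(\xi)\cong\mu^{-1}(a\xi)$ for all $a\in\C^\times$; letting $a\to 0$ and combining upper semi-continuity of fiber dimension with $\dim\mu^{-1}(0)=\dim X$ gives $\dim\mu^{-1}(\xi)\le\dim X$ for every $\xi$. The opposite inequality follows from dominance of $\mu$, which is itself forced by the upper bound (non-dominance would produce a fiber of excess dimension). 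The identical argument, run with the dot-action of $\C^\times$ on $\W$ and on $\g^*\times\C$, gives flatness of $\bmu\times\bnu$; the zero fiber is $(\bmu\times\bnu)^{-1}(0,0)=\mu^{-1}(0)$ via the canonical identification $\bnu^{-1}(0)=T^*X$. Part (ii) is then immediate: flatness of $\mu$ with smooth target means local coordinates on $\g^*$ pull back to a regular sequence, exhibiting each fiber $\mu^{-1}(\xi)$ as a complete intersection in $T^*X$.

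For (iii), the hypothesis supplies \eqref{item3}. To establish \eqref{item1}, I would observe that $\La$ lies entirely in $\pr_X^{-1}(U)$: by construction $\La$ is the image under $T^*L\to\W$ of the graph of $d\log s$ over $\wt U$, and $\wt U$ projects to $U$. On the open subset $\pr_X^{-1}(U)\cap\bnu^{-1}(1)$, which is symplectic, the $G$-action has only finite stabilizers, since the stabilizer of a point lying over $x\in U$ is contained in $G_x$, and $G_x$ is finite as $\dim G=\dim U$. By the standard moment-map computation (image of the differential equals the annihilator of the stabilizer Lie algebra), local freeness of the action forces the restricted moment map to be submersive, so its fiber $\bmu^{-1}(\phi)\cap\bnu^{-1}(1)\cap\pr_X^{-1}(U)$ is smooth and in particular reduced at every point of $\La$.

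The main obstacle is assembling the flatness argument, where one must juggle the $\C^\times$-contraction to bound fiber dimension from above, deduce dominance (and thus the matching lower bound), and invoke the correct version of miracle flatness; the subsequent steps---the complete-intersection property from flatness, and the reducedness from the submersiveness of the restricted moment map---are then routine.
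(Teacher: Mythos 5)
Your argument for the opening dimension count, for flatness via the $\C^\times$-contraction plus miracle flatness, and for the complete-intersection property in (ii) tracks the paper's proof closely; the only cosmetic difference is that you spell out why the fiber dimension cannot drop below $\dim T^*X-\dim\g^*$ (the paper leaves this implicit, and in fact the lower bound is automatic for nonempty fibers by Krull, whether or not one first establishes dominance).

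Where you genuinely depart from the paper is in the verification of \eqref{item1}. You argue directly: $\La\sset\pr_X^{-1}(U)$, the $G$-action on the symplectic manifold $\pr_X^{-1}(U)\cap\bnu^{-1}(1)$ has finite stabilizers because stabilizers in $\W$ inject into stabilizers in $X$, and then the standard identity $\im(d\bmu_m)=\g_m^\perp$ shows $\bmu$ is submersive there, so the relevant fiber is smooth and hence reduced along $\La$. The paper instead runs a degeneration argument: it first observes that the $a=0$ fiber $\bmu^{-1}(0)_U\cap\bnu^{-1}(0)$ is the reduced zero section $U\sset T^*U$ (again because stabilizers on $U$ are finite), then uses flatness of $\bnu$ on $\bmu^{-1}(\C\cdot\phi)_U$ together with openness of geometric reducedness to conclude that the general fiber is reduced, and finally uses the dot-$\C^\times$-action to identify all nonzero fibers with the general one. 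Both arguments are correct, and both ultimately rest on finiteness of the stabilizers over $U$. Yours is arguably more direct, as it avoids the appeal to openness of reducedness in a flat family and to the $\C^\times$-homogeneity of the nonzero fibers; the paper's version has the virtue of reusing the already-established structural facts (flatness of $\bmu\times\bnu$, dot-equivariance) and of producing reducedness of the $a=0$ fiber as a stepping stone, which is a formulation that recurs elsewhere in arguments of this type.
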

\begin{proof} The equation $\dim\mu\inv(0)=\dim X$ is clear from
\eqref{dimension}.

Observe next that each of the maps $\mu$ and $\bmu\times\bnu$,
is an
equivariant
morphism between smooth varieties with contracting
$\C^\times$-actions, the `dot-actions'. In such a case, 
the dimension of any fiber of the morphism
is less than or
equal to 
the dimension of the zero fiber. 
Thus,  any fiber of either
 $\mu$ or $\bmu\times\bnu$, has dimension
less than or
equal to $\dim X.$ Now, the assumption that
$\dim X=\dim G$ implies that the fiber dimension
is equal to $\dim T^*X-\dim \g^*.$
We conclude that each of the maps $\bmu\times\bnu$ and $\mu$ is flat.
This yields part (i). 

Part (ii) also follows,
since $T^*X$ and $\g^*$ are
smooth schemes and any fiber of a flat morphism of smooth schemes
is a complete intersection. 

We now prove (iii). Condition \eqref{item3} is clear.
To prove ~\eqref{item1}, we use diagram \eqref{diagram},
and set $\bmu^{-1}(\BC\cdot\phi)_U:=\pr_X\inv(U)\cap\bmu^{-1}(\BC\cdot\phi).$
This is an open subset in $\bmu^{-1}(\BC\cdot\phi)$.
Observe that, since the group $G$ acts transitively
 on
$U$  with finite stabilizers, it follows that
the scheme-theoretic intersection $\bmu^{-1}(0\cdot\phi)_U\cap\bnu^{-1}(0)$
is the {\em reduced} zero section $U\subset T^*U$.
Hence, the  general fiber
of the map $\bnu: \
\bmu^{-1}(\BC\cdot\phi)_U\to \BC\cdot\phi$ is reduced as well.
But {\em any} nonzero fiber of this map
may be viewed as `general', due to
the $\C^\times$-action. Thus, the fiber
$\bmu^{-1}(\BC\cdot\phi)_U\cap\bnu^{-1}(1)\supset\Lambda$ is reduced,
and ~\eqref{item1} is proved.
\end{proof}

\subsection{Reduction of the proof of  Theorem \ref{bthm}.}
We need to review some basic definitions concerning
$G$-monodromic $\D$-modules. 

Let $\psi:\g\to\C$ be a 1-dimensional character.
 First of all,
we introduce a flat connection on $\oo_G$,
a rank one trivial line bundle on $G$, defined by the formula
$\nabla_u(f):=u(f)-\psi(u)\cdot f$, for any
$f\in\oo_G$ and any left  invariant vector field
$u$ on $G$ identified with the corresponding element of 
the Lie algebra $\g$.
This connection makes the structure sheaf $\oo_G$
a $\D_G$-module, to be denoted $\oo^\psi_G$.

Now,
let $X$ be an arbitrary  smooth $G$-variety,
with the action map $a: G\times X\to X$.
In the setting of  \S\ref{psi}, 
let $(\D,G)$ be  a Harish-Chandra pair on $X$.
By \cite[\S1.4.5(ii)]{BB3}, one has a natural isomorphism
$a^*\D\cong \D_G\boxtimes\D$, of TDO.

Let $M$ be a $\D$-module on $X$. We say
that $M$ is $(G,\psi)$-{\em monodromic}
if $M$ is a {\em weakly} $G$-equivariant
$\D$-module (i.e. a `weak $(\D,G)$-module'
in the sense of \cite[\S1.8.5]{BB3}) and
there is an isomorphism $a^*M\cong \oo^\psi_G\boxtimes M$,
of $\D_G\boxtimes\D$-modules, such that the natural
cocycle condition holds.

The following result is an extention of
 \cite[VII, \S12.11]{Bo}, where a similar result
was proved for (strongly) $G$-equivariant $\D$-modules.

\begin{lemma}\label{mon} Let $G$, a reductive group,
 act on $X$ with finitely many orbits.
Let $(\D,G)$ be a Harish-Chandra pair, where $\D$ is
a locally trivial TDO on $X$.
Then, any  $(G,\psi)$-monodromic
$\D$-module is regular at every point $x\in X$.
\end{lemma}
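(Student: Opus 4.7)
The plan is to extend the proof of the strongly equivariant case \cite[VII, \S12.11]{Bo} to the $(G,\psi)$-monodromic setting by dévissage along the $G$-orbit stratification. Regularity at $x$ is a local property, so by the local triviality of $\D$ we may pass to an étale neighborhood in which $\D = \D_X$; under the finite-orbit hypothesis a weakly $G$-equivariant $\D$-module is automatically holonomic (its characteristic variety is contained in the union of conormal bundles to $G$-orbits), so the notion of regularity applies to $M$.

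Order the orbits $O_1, \ldots, O_N$ so that each $X_i := O_1 \cup \cdots \cup O_i$ is closed and $G$-stable, and consider the support filtration $M_i := \G_{X_i}(M) \sseq M$. By Kashiwara's theorem each successive quotient $F_i := M_i/M_{i-1}$ is a $(G,\psi)$-monodromic $\D_X$-module supported on $\overline{O_i}$, and by construction $F_i$ has no nonzero subsheaf supported on $X_{i-1}$. Its restriction $\CL_i := F_i|_{O_i}$ is a weakly $G$-equivariant $(G,\psi)$-monodromic $\D_{O_i}$-module on the homogeneous space $O_i \cong G/G_{x_i}$; any such object is forced to be $\oo_{O_i}$-coherent and corresponds to a finite-dimensional algebraic representation of the stabilizer $G_{x_i}$ whose differential agrees with $\psi|_{\g_{x_i}}$ (in particular $\CL_i = 0$ unless this character integrates).

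By Deligne's theorem on the regularity of algebraic connections, each $\CL_i$ is regular holonomic on $O_i$, and hence $(\jmath_i)_*\CL_i$ is regular holonomic on $X$, where $\jmath_i : O_i \hookrightarrow X$ denotes the locally closed embedding. The adjunction map $F_i \to (\jmath_i)_*\CL_i$ is injective (its kernel would be a subsheaf of $F_i$ supported on $\overline{O_i} \setminus O_i \sseq X_{i-1}$, contradicting the vanishing above), so $F_i$ embeds into a regular holonomic module and is therefore itself regular holonomic. Since regularity is stable under extensions, $M$ is regular at every point of $X$.

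The main obstacle is the assertion in the second paragraph: that a weakly $G$-equivariant $(G,\psi)$-monodromic $\D$-module on a homogeneous space $G/H$ is automatically $\oo$-coherent and arises from an algebraic $H$-representation with differential $\psi|_{\Lie H}$. This is the monodromic analogue of the classical equivalence for strongly equivariant $\D$-modules (\cite[\S1.8]{BB3}); the extension can be carried out by descent along the $H$-torsor $G \to G/H$, matching $(G,\psi)$-monodromic structures on the $\D$-module $\oo^\psi_G$ with characters of $H$ extending $\psi|_{\Lie H}$, and then invoking the standard equivalence for the residual strongly equivariant category.
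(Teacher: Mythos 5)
Your dévissage via the support filtration $M_i=\Gamma_{X_i}(M)$ is a legitimate alternative to the paper's induction on a closed orbit using $i^!M$ and a long exact sequence, and the embedding $F_i\hookrightarrow(\jmath_i)_*\CL_i$ (via vanishing of $\Gamma_{X_{i-1}}(F_i)$) is a clean way to reduce to the single-orbit case. The step that corresponds to a finite-dimensional $G_{x_i}$-representation, giving $\oo$-coherence of $\CL_i$, is also fine.

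However, there is a genuine gap at the single-orbit step. You justify the regularity of $\CL_i$ on $O_i$ by appealing to ``Deligne's theorem on the regularity of algebraic connections.'' No such theorem exists: not every algebraic integrable connection is regular (for instance $d+dz$ on $\BA^1$ is irregular at $\infty$), and Deligne's result concerns the Riemann--Hilbert correspondence \emph{for} regular connections, not a statement that all connections are regular. This is precisely where the hypothesis that $G$ be \emph{reductive} must enter, and it never appears in your argument. Indeed, for $G=\mathbb{G}_a$ acting on itself, $\oo^\psi_G$ is the connection $d+\psi\,dz$, which is irregular at infinity whenever $\psi\neq 0$, so the lemma would fail. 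The paper handles this by first proving that $\oo^\psi_G$ is regular when $G$ is reductive (checking the torus case directly and reducing the general reductive case to a torus), and then deducing regularity of a $(G,\psi)$-monodromic module on a single orbit $G/K$ from the regularity of $\oo^\psi_G$. Your proof needs exactly this input in place of the appeal to ``Deligne's theorem'': you must show that the $G$-equivariant $(G,\psi)$-monodromic connection $\CL_i$ on $O_i\cong G/G_{x_i}$ is regular, using that $G$ is reductive, e.g.\ by pulling back to $G$ where it becomes (up to the unipotent part) $\oo^\psi_G\otimes V$ and invoking the torus reduction.
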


\begin{proof} We follow the same strategy as in the proof of 
\cite[Theorem 11.6.1]{HTT},
cf. also \cite[VII, \S12.11]{Bo}. First of all, 
 we observe that   $\oo^\psi_G$ is a regular   $\D_G$-module
in the usual sense
(i.e. for any completion $j:G\into\overline{G}$,
the $\D_{\overline{G}}$-module $j_*\oo^\psi_G$ is regular  at
any point $g\in \overline{G}$),
 provided the group
$G$ is reductive. This is verified directly in the case where $G$ is a
complex torus;
the case of a general reductive group $G$ can be easily reduced
to the case of a torus.

Now, let $Y$ be an arbitrary smooth $G$-variety and let
 $(\D,G)$ be a Harish-Chandra pair, where $\D$ is
a locally trivial TDO on $Y$ and $G$ is a reductive group.
Let $\imath: X\into Y$ be an imbedding
of a $G$-stable smooth locally closed subvariety.
There is a well defined pull-back $\imath^*\D$, a TDO  on $X$,
cf. \cite[\S1.4]{BB3}.

We claim the following: {\em If 
$X$ is a finite union of $G$-orbits, then any  $(G,\psi)$-monodromic
$\imath^*\D$-module $M$, on $X$, is regular at any point
$y\in Y$} (this statement is vacuous unless $y$ is contained in
 the closure of $X$). 
In the special case where  $X=G/K$ is a single $G$-orbit,
the claim
is a straightforward consequence of the
regularity of the $\D_G$-module  $\oo^\psi_G$.

We now prove the claim  in the general case.
Let $O$ be a closed $G$-orbit in $X$.
Write $i: O\into X$ for the imbedding,
and write $a_O$, resp. $a_X$, for the $G$-action
morphism on $O$, resp. on $X$.
Let $i^!M$ be the (derived) restriction of $M$,
a $(G,\psi)$-monodromic
$\D$-module on $X$, to $O$. 
Thus,  $i^!M$ is a complex
and each cohomology group ${\scr H}^p(i^!M)$, of that complex,
is a holonomic $i^*\D$-module on $O$.
Furthermore, we observe that  ${\scr H}^p(i^!M)$
is  a $(G,\psi)$-monodromic
$i^*\D$-module. This follows easily by equating compositions of
derived restriction functors induced by the
following equal composite maps
$i\ccirc a_O=a_X\ccirc(\Id_G\times i):\
G\times O \to X.$

We deduce, using the result in the case of one orbit, that
 each cohomology group ${\scr H}^p(i^!M)$
is regular at any point $y\in Y$.
The proof of the claim is now
completed  by induction
on the number of $G$-orbits in $X$.
The argument is based
on a long exact sequence (the latter works
for locally trivial TDO similarly to
the usual case) in the same way as in the proof of
\cite[Theorem 11.6.1]{HTT}.

Finally, applying the claim in the case $X=Y$
yields the statement of the lemma.
\end{proof}
 
We now return to the setting of   Theorem \ref{bthm}.
It is immediate from definitions that
$\D_k/\D_k{}^{\,}\g^\psi$ is a $(G,\psi)$-monodromic
$\D_k$-module.
The number of $G$-orbits on $X$ being finite and the group $G$
being reductive,
we deduce from Lemma \ref{mon}
that $\D_k/\D_k{}^{\,}\g^\psi$ is a holonomic
$\D_k$-module which is regular at every  $x\in X$.
At this point,
it is clear that Theorem \ref{bthm}
follows from Lemma  \ref{cond_lem} and
 the following more general result.

\begin{proposition}\label{thm_cycle} Let $G$ act on $X$ with finitely
many orbits. Then,

\vi  If $\dim X=\dim G$ then, for any  character $\psi: \g\to\C$, 
there is a natural isomorphism
$$R{{\scr H}_{\!}om}_{\D_k}(\D_k/\D_k{}^{\,}\g^\psi,\D_k)\cong
\D^\opp_{-k}/\D^\opp_{-k}{}^{\,}\g^{\delta-\psi}[-\dim X],\qquad\forall k\in\Z.
$$

\vii Assume that conditions \eqref{item1}-\eqref{item3} as well as
~\eqref{match} hold and, moreover, that $\D$ is
a locally trivial TDO.
 Then,  for all sufficiently  negative integers $k\ll0$, 
the canonical map in \eqref{concl_thm} is an isomorphism.
\end{proposition}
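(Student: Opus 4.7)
The plan is to prove (i) by a Chevalley--Eilenberg/Koszul resolution built from the flatness of the moment map, and then to deduce (ii) by a local-monodromy analysis at each boundary stratum, with Lemma~\ref{cond_lem}(iii) providing the crucial geometric input.

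For (i), the hypothesis $\dim X=\dim G$ together with Lemma~\ref{muflat}(i,ii) implies that the moment map $\mu:T^*X\to\g^*$ is flat and its fibers are complete intersections; consequently the principal symbols $\{\sigma(\arr{u})\}_{u\in\g}$ form a regular sequence in $\gr\D_k$ near $\mu\inv(0)$, and since the shift by $\psi$ contributes only zeroth-order terms, the same conclusion lifts to $\D_k$ itself. A standard filtered spectral-sequence argument then shows that the Chevalley--Eilenberg complex
\[
K^\bullet(\psi):\quad 0\to\D_k\o\bigwedge{}^{n}\g\to\cdots\to\D_k\o\g\to\D_k\to 0,\qquad n=\dim G,
\]
with differential attached to the Lie-algebra map $u\mto\arr{u}-\psi(u)$, is a locally-free resolution of $\D_k/\D_k\g^\psi$. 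The complex $\Hom_{\D_k}(K^\bullet(\psi),\D_k)$ therefore computes $R{\scr H}om_{\D_k}(\D_k/\D_k\g^\psi,\D_k)$; using the canonical identification $\bigwedge^n\g\cong\C_\delta$ of $\g$-modules (passage to top exterior powers introducing the modular character $\delta=\Tr\ad$) and accounting for the length-$n$ homological shift, the resulting dual complex is identified with a resolution of $\D_{-k}^\opp/\D_{-k}^\opp\g^{\delta-\psi}$ placed in cohomological degree $\dim X$, establishing (i).

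For (ii), write $M_k:=\D_k/\D_k\g^{\psi+k\phi}$ and $\imath:X\sminus U\into X$. The canonical triangle $\imath_*\imath^!M_k\to M_k\to\jmath_*\jmath^*M_k\to$ reduces the claim to the vanishing $\imath^!M_k=0$ for $k\ll 0$. By Lemma~\ref{mon}, $M_k$ is regular holonomic and $(G,\psi+k\phi)$-monodromic; by $G$-equivariance and finiteness of orbits, this vanishing can then be checked at the generic point $\eta_\alpha$ of each boundary $G$-orbit $Z_\alpha\sset X\sminus U$. The content of Lemma~\ref{cond_lem}(iii)---that under~\eqref{match} the Lagrangian $\La$ lies strictly over $U$---together with the monodromic formalism of~\S\ref{m-sec}, pins down the singular support of $M_k$ near $\eta_\alpha$ to consist only of the conormal $T^*_{Z_\alpha}\!X$. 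In \'etale-local coordinates with $s_\alpha$ a transverse defining equation for $Z_\alpha$, the module $M_k$ therefore takes, on a transverse slice, the form $\D/\D(s_\alpha\partial_{s_\alpha}-\la_\alpha(k))$, where $\la_\alpha(k)$ is an affine function of $k$ whose slope is controlled by the discrepancy $\chi_{\LL,\eta_\alpha}-\phi|_{\g_{\eta_\alpha}}$---nonzero by~\eqref{match}.

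A standard $b$-function computation for such rank-one monodromic modules shows $\imath^!M_k=0$ at $\eta_\alpha$ precisely when $\la_\alpha(k)\notin\Z_{\geq 0}$; the nonzero-slope affineness of $\la_\alpha(k)$ then makes this condition satisfied uniformly in $\alpha$ for all $k$ sufficiently negative, completing the proof. The main technical obstacle will be the explicit identification, at each boundary orbit, of the local-monodromy eigenvalue $\la_\alpha(k)$ in terms of the stabilizer characters and the parameter $k$; this requires a precise dictionary between the monodromic realization of $M_k$ on the principal $\C^\times$-bundle $L\to X$ of~\S\ref{m-sec} and the $\D_k$-module structure on $X$, and is exactly the point where the passage from the coarse geometric input of Lemma~\ref{cond_lem} to the $\D$-module-level vanishing requires real care.
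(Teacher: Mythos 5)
Your treatment of part (i) agrees with the paper's: the paper reduces (i) to Lemma~\ref{duality_lem}, which is exactly the Chevalley--Eilenberg/Koszul duality computation you describe, with the flatness of the moment map (Lemma~\ref{muflat}) guaranteeing that the complex is a resolution and $\bigwedge^{\dim\g}\g$ introducing the modular shift $\delta$.

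Your plan for part (ii), however, diverges from the paper and has a genuine gap. The paper's argument is \emph{global}: it first proves surjectivity of $\D_k/\D_k\g^{\psi+k\phi}\to\jmath_*\CE_k$ for $k\ll0$ by the $b$-function Lemma~\ref{bfunction} (the generator $s^k\cdot e$ lies in the image), and then proves that the kernel vanishes by comparing characteristic \emph{cycles}: the inequality $[SS(\D_k/\D_k\g^{\psi+k\phi})]\leq[\mu^{-1}(0)]$ from the associated-graded surjection, the limit formula $[\mu^{-1}(0)]=\lim_{a\to0}[\bmu^{-1}(a\phi)\cap\bnu^{-1}(a)]$, and Ginzburg's theorem identifying this limit with $[SS(\jmath_*\CE_k)]$. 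Equality of cycles then forces the kernel to vanish. Your proposal instead reduces $\imath^!M_k=0$ to an orbit-by-orbit check and models $M_k$ on a transverse slice as $\D/\D(s_\alpha\partial_{s_\alpha}-\la_\alpha(k))$.

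The gap is the treatment of orbits of codimension at least two. Your reduction correctly observes that if $\imath^!M_k\neq0$ then its support contains some $G$-orbit $Z_\alpha\subset X\sminus U$, so it suffices to check vanishing at the generic point of \emph{every} boundary orbit. But your rank-one local model $\D/\D(s_\alpha\partial_{s_\alpha}-\la_\alpha(k))$ applies only when $Z_\alpha$ has codimension one (there must be a single transverse defining equation $s_\alpha$). At a codimension-$\geq2$ orbit $Z_\alpha$, the transverse slice is higher-dimensional, the characteristic variety of $M_k$ near $\eta_\alpha$ may contain conormals to several orbits passing through $\overline{Z_\alpha}$, and the local structure is no longer a one-parameter monodromic model. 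Your argument therefore cannot rule out a nonzero $\imath^!M_k$ supported entirely in codimension $\geq 2$ inside $X\sminus U$. Beyond this, you flag (correctly) that the identification of the eigenvalue $\la_\alpha(k)$ with the stabilizer-character discrepancy is nontrivial; the paper's characteristic-cycle argument sidesteps exactly that computation and also automatically supplies the surjectivity you do not establish.
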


Part (i) of the proposition is an immediate consequence
of Lemma \ref{muflat}, and of  Lemma \ref{duality_lem} of section
\ref{le}
below.
The proof  of
part (ii) is based on some results of Kashiwara and will be given in the following subsection.

\subsection{An application of the $b$-function.}
We begin with
 the following general result whose proof is
obtained by a standard application of the theory of $b$-functions,
cf. \cite{K1}. Let ${\mathsf X}$ be a manifold,
$f:\ {\mathsf X}\to\BC$ be a regular function, and 
${\mathsf U}:={\mathsf X}\sminus
f^{-1}(0)\stackrel{j}{\hookrightarrow}{\mathsf X}$.
Let $\D$ be a
 locally trivial TDO on ${\mathsf X}$, and
write  
$\D_{\mathsf U}$ for the restriction of  $\D$ to
the open set ${\mathsf U}$.  
\begin{lemma}\label{bfunction}
Let $\CE=\D_{\mathsf U}\cd e$ be a cyclic, 
holonomic $\D_{\mathsf U}$-module generated by
an element $e\in \CE$. Then, for any $k\ll0$, the element
$f^k\cd e$ is a generator for the $\D$-module
$j_*\CE,$ that is, we have $j_*\CE=\D\cd(f^k\cd e).$ \qed
\end{lemma}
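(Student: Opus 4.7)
The plan is to invoke Kashiwara's theory of $b$-functions for cyclic holonomic $\D$-modules \cite{K1}. Since the claim is local on ${\mathsf X}$ and $\D$ is locally trivial, we may reduce to the case $\D=\D_{\mathsf X}$ (the sheaf of ordinary differential operators). Because $j:{\mathsf U}\into{\mathsf X}$ is the embedding of the complement of the hypersurface $f^{-1}(0)$, the morphism $j$ is affine, the direct image $j_*\CE$ is a holonomic $\D$-module on ${\mathsf X}$, and $e$ extends to a global section of $j_*\CE$; in particular $f^k\cd e$ is a well-defined element of $j_*\CE$ for every $k\in\BZ$.

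Applied to the cyclic holonomic module $j_*\CE$ with generator $e$ and the regular function $f$, Kashiwara's theorem produces a nonzero polynomial $b(s)\in\BC[s]$ and a differential operator $P(s)\in\D[s]$ satisfying the Bernstein--Sato functional equation
\[
b(s)\cd f^s e \;=\; P(s)\cd f^{s+1} e
\]
in the appropriate $\D[s]$-module generated by the symbol $f^s e$.

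Put $M_k:=\D\cd(f^k e)\sseq j_*\CE$ for $k\in\BZ$. The identity $f^k e=f\cd f^{k-1} e$ always yields the inclusion $M_k\sseq M_{k-1}$. Specializing the functional equation at $s=k-1$ shows that $f^{k-1} e\in M_k$, hence $M_{k-1}\sseq M_k$, whenever $b(k-1)\neq 0$. Because $b$ has only finitely many integer roots, there exists $k_0\in\BZ$ such that $b(k)\neq 0$ for all integers $k<k_0$; for all such $k$ the chain stabilizes, i.e.\ $M_{k_0}=M_{k_0-1}=M_{k_0-2}=\ldots$.

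To conclude, the affineness of $j$ together with the cyclicity $\CE=\D_{\mathsf U}\cd e$ implies that every section of $j_*\CE$ is of the form $D\cd f^{-N} e$ for some $N\ge 0$ and some $D\in\D$; consequently $j_*\CE=\bigcup_{N\ge 0} M_{-N}$. Combined with the stabilization of the chain $\{M_k\}_{k\le k_0}$, this forces $M_{k_0}=j_*\CE$, which is precisely the claim. The essential input is the existence of a Bernstein--Sato functional equation for a cyclic holonomic module \textemdash\ this is Kashiwara's theorem from \cite{K1} and is the only non-formal step; the remaining monotonicity and stabilization arguments are routine bookkeeping about chains of cyclic submodules.
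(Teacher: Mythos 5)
Your proof is correct and follows exactly the route the paper indicates: the paper states Lemma \ref{bfunction} without proof, remarking only that it is ``obtained by a standard application of the theory of $b$-functions, cf. \cite{K1},'' and your argument is precisely that standard application, using Kashiwara's Bernstein--Sato functional equation, the stabilization of the chain $M_k=\D\cdot(f^k e)$ once $k$ is below all integer roots of $b$, and the identification $j_*\CE=\bigcup_{N\ge 0}M_{-N}$ coming from affineness of $j$ and cyclicity of $\CE$.
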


We return now to the setup of Proposition  \ref{thm_cycle} and put
 $\CE_k:=\jmath^*(\D_k/\D_k{}^{\,}\g^{\psi+k\cdot\phi}),$ so $\CE=\CE_0$.
\begin{lemma} 
\label{zabyl}
For all $k\ll0,$ the canonical map
$\D_k/\D_k{}^{\,}\g^{\psi+k\cdot\phi}\to\jmath_{*}\CE_k$ 
is surjective.
\end{lemma}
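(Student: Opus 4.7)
The plan is to reduce the claim to Lemma~\ref{bfunction} by covering $X$ by affine opens on which the line bundle $\LL$ is trivial, and then to assemble the local surjectivities using the finiteness of the cover.

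First, I would choose a finite affine open cover $\{V_1,\ldots,V_N\}$ of $X$ with the property that $\LL|_{V_i}$ admits a trivializing section $\sigma_i\in\Gamma(V_i,\LL)$ for every $i$; such a cover exists because $X$ is quasi-compact. Writing $s|_{V_i}=f_i\cdot\sigma_i$ with $f_i\in\CO(V_i)$, we have $V_i\cap U=V_i\sminus f_i\inv(0)$, so that $f_i$ plays the role of the function in the setting of Lemma~\ref{bfunction}. The trivialization $\sigma_i$ induces an isomorphism of TDO $\D_k|_{V_i}\iso\D|_{V_i}$; under this isomorphism the Lie algebra map $u\mto\arr{u}-(\psi(u)+k\phi(u))$ defining $\D_k\,\g^{\psi+k\phi}$ is conjugated into a map of the form $u\mto\arr{u}-\psi(u)-k\chi_i(u)$, where $\chi_i:\g\to\CO(V_i)$ records the infinitesimal action of $\g$ on $\sigma_i$ (depending on $\sigma_i$, but not on $s$).

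Next, using the semi-invariance $\arr{u}(s)=\phi(u)\cdot s$, which translates into the equation $\arr{u}(f_i)=(\phi(u)-\chi_i(u))\cdot f_i$ on $V_i$, I would verify that, in the trivialized picture, the canonical cyclic generator of $\D_k/\D_k\,\g^{\psi+k\phi}|_{V_i}$ restricts on $V_i\cap U$ to $f_i^{\,k}\cdot e_i$, where $e_i$ is the cyclic generator of the restriction of $\CE=\jmath^*(\D/\D\,\g^\psi)$ to $V_i\cap U$ obtained by trivializing $\LL|_{V_i\cap U}$ via $s$. Applying Lemma~\ref{bfunction} to the function $f_i$ and to the cyclic holonomic $\D|_{V_i\cap U}$-module $\CE|_{V_i\cap U}$ then produces an integer $k_i$ such that, for every $k\le k_i$, the element $f_i^{\,k}\cdot e_i$ generates $(\jmath|_{V_i})_*\CE|_{V_i\cap U}$ as a $\D|_{V_i}$-module, which is precisely the surjectivity of the canonical map on $V_i$.

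Setting $k_0:=\min_{1\le i\le N}k_i$ then yields the desired surjectivity on all of $X$ for every $k\le k_0$. The main obstacle is the bookkeeping in the second paragraph: one must match carefully, on the overlap $V_i\cap U$, the two natural trivializations of $\LL$ (by $\sigma_i$ on $V_i$ and by $s$ on $V_i\cap U$), so that the shift of the Lie algebra action by $k\phi$ is precisely absorbed by the change-of-trivialization factor $f_i^{\,k}$. Once this identification is pinned down, the $b$-function lemma together with the quasi-compactness of $X$ finishes the argument.
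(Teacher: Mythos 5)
Your proposal is essentially the paper's own argument: both reduce the claim to Lemma~\ref{bfunction} by observing that, after twisting by the $k$-th power of the defining section/function of $X\sminus U$, the canonical generator of $\D_k/\D_k{}^{\,}\g^{\psi+k\phi}$ over $U$ becomes the ``shifted'' generator $f^k\cdot e$, to which the $b$-function lemma applies. The only genuine difference is presentational: the paper works globally, directly defining $e_k:=s^k\cdot e$ (with $s$ a section of $\LL$ rather than a function) and invoking Lemma~\ref{bfunction} tersely, then checking that $e_k$ is killed by $\g^{\psi+k\phi}$ so that $1\mto e_k$ is the canonical map; your version unpacks the implicit local content by trivializing $\LL$ on a finite affine cover so that $s$ becomes a genuine function $f_i$ to which Lemma~\ref{bfunction} literally applies, and then uses quasi-compactness to take $k_0=\min k_i$. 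One small bookkeeping slip: with $\chi_i(u)=L_u(\sigma_i)/\sigma_i$ (as your identity $\arr{u}(f_i)=(\phi(u)-\chi_i(u))f_i$ forces), the $\sigma_i^k$-conjugate of $u\mto\arr{u}-\psi(u)-k\phi(u)$ is $u\mto\arr{u}-\psi(u)-k\big(\phi(u)-\chi_i(u)\big)=\arr{u}-\psi(u)-k\,\arr{u}(f_i)/f_i$, not $\arr{u}-\psi(u)-k\chi_i(u)$; the conclusion that the generator restricts to $f_i^{\,k}\cdot e_i$ is nonetheless correct.
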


\begin{proof}
By adjunction, there is a  canonical map
$$
\D/\D\g
\to\jmath_*\jmath^*(\D/\D\g)=:\jmath_*\CE.
$$
Let $e=e_0\in\jmath_*\CE$ be the image of the class of $1\in
\D$. Clearly, we have $\D_U\cd e=\CE$.

For any  $k\in\Z$, let $e_k:=s^k\cd e\in\jmath_*\CE_k $. 
Since $\D_U\cd e_0=\CE,$
 Lemma~\ref{bfunction} implies that
there exists $k_0<0$ such that, for all $k<k_0$, 
one has $\D_k\cd
e_k=\jmath_*\CE_k .$

The element $e_k$ is clearly annihilated
by the action of the ideal $\D_k{}^{\,}\g^{\psi+k\cdot\phi}\subset\D_k$. Therefore, the
assignment $1\mto e_k$ gives  a well defined map
$$
q_k: \
\D_k/\D_k{}^{\,}\g^{\psi+k\cdot\phi}\too\jmath_*\CE_k .
$$

But, $e_k$ is a generator of the $\D_k$-module $\jmath_*\CE_k$.
Therefore, the map $q_k$ is surjective, and we are done.
\end{proof}

For any $k\in\Z,$ the TDO $\D_k$ comes equipped with an increasing filtration
such that $\gr\D_k\cong p_*\CO_{T^*X},$ where
$p: T^*X\to X$ is the bundle projection.
Let  $[SS(\CM)]$ denote
the characteristic {\em cycle} of a holonomic
$\D_k$-module, resp. $[\ssup(\CF)]$ denote the support cycle
of a coherent sheaf on ~$T^*X$.

\begin{lemma}
\label{SS} 
Conditions \eqref{item1}-\eqref{item3} imply that 
$\D_k/\D_k{}^{\,}\g^{\psi+k\cdot\phi}$ is a  holonomic
$\D_k$-module and one has
an equality of Lagrangian cycles
$$[SS(\D_k/\D_k{}^{\,}\g^{\psi+k\cdot\phi})]=[SS(\jmath_*\CE_k)],\qquad\forall k\in\Z.$$
\end{lemma}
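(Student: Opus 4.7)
The plan is to show both characteristic cycles coincide with $[\mu^{-1}(0)]$, the Lagrangian cycle of the zero fiber of the moment map $\mu\colon T^*X\to\g^*$, which is a well-defined scheme by Lemma~\ref{muflat}(ii).

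For the left-hand side I would equip $\D_k$ with the order filtration, so that $\gr\D_k\cong p_*\CO_{T^*X}$. The principal symbols of the generators $\arr{u}-(\psi+k\cd\phi)(u)\cd\eu$ of the left ideal $\D_k\cd\g^{\psi+k\cd\phi}$ are precisely the components of $\mu$ viewed as functions on $T^*X$; the order-zero shifts $(\psi+k\cd\phi)(u)$ are invisible at the symbol level. By Lemma~\ref{muflat} the map $\mu$ is flat and $\mu^{-1}(0)$ is a complete intersection in $T^*X$, so a basis of $\g$ forms a regular sequence of functions on $T^*X$. The associated Koszul complex then lifts to a finite free resolution of $\CM_k:=\D_k/\D_k\cd\g^{\psi+k\cd\phi}$ that is strictly compatible with the order filtration; passing to $\gr$ yields $\gr\CM_k\cong p_*\CO_{\mu^{-1}(0)}$, so $[SS(\CM_k)]=[\mu^{-1}(0)]$ for every $k\in\Z$. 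In particular $\CM_k$ is holonomic, which settles the first assertion of the lemma.

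For the right-hand side, $\jmath_*\CE_k$ is holonomic (direct image under an open embedding) and, as the direct image of a monodromic module, is itself $(G,\psi+k\cd\phi)$-monodromic; thus $[SS(\jmath_*\CE_k)]$ is supported on $\bigcup_O\overline{T^*_OX}=\mu^{-1}(0)_{\text{red}}$, with multiplicity $\rk\CE_k=1$ along the open orbit, matching $[\mu^{-1}(0)]$ there. To pin down the boundary multiplicities, I would combine the canonical map $q_k\colon\CM_k\to\jmath_*\CE_k$, surjective for $k\ll 0$ by Lemma~\ref{zabyl}, with the duality of Proposition~\ref{thm_cycle}(i). Applying Verdier duality to $q_k$ yields an injection $\jmath_!(\CE_k\ve)\hookrightarrow\D^\opp_{-k}/\D^\opp_{-k}\cd\g^{\delta-\psi-k\cd\phi}$, whose target again has characteristic cycle $[\mu^{-1}(0)]$ by the Step~1 argument applied to the opposite TDO. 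Additivity of characteristic cycles on the four-term sequence $0\to K_k\to\CM_k\to\jmath_*\CE_k\to C_k\to 0$ (with $K_k,C_k$ supported on $X\sminus U$) and its Verdier dual, together with the interchange of $\jmath_*$ and $\jmath_!$ under Verdier duality, then forces $[SS(\jmath_*\CE_k)]=[\mu^{-1}(0)]$.

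The genuine difficulty is the second step. A priori the boundary multiplicities of $[SS(\jmath_*\CE_k)]$ could depend on $k$ through the monodromy of $\CE_k$, whereas Step~1 shows that $[\mu^{-1}(0)]$ is $k$-independent; reconciling the two without circularly invoking Theorem~\ref{bthm}(ii) (the very statement this lemma is used to prove) is the heart of the matter. The essential ingredient for bridging this gap is the interplay between Proposition~\ref{thm_cycle}(i), which identifies the $\D$-module dual of $\CM_k$ with its opposite-TDO twin carrying character $\delta-\psi-k\cd\phi$, and Verdier duality, which swaps $\jmath_*$ with $\jmath_!$; together they reduce the equality to a bookkeeping on the characteristic cycles of the torsion pieces $K_k,C_k$ and their Verdier duals, which must cancel because the total cycles on both ends agree.
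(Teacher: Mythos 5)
Your Step~1 is correct and, in fact, is a slight sharpening of what the paper writes down: you invoke Lemma~\ref{holl} (via Lemma~\ref{muflat}(ii), the complete‑intersection property of $\mu^{-1}(0)$) to conclude that the canonical surjection $\gr\D_k/(\gr\D_k)\g\onto\gr(\D_k/\D_k{}^{\,}\g^{\psi+k\phi})$ is actually a bijection, so $[SS(\CM_k)]=[\mu^{-1}(0)]$ for every $k$. The paper contents itself with the inequality $[SS(\CM_k)]\leq[\mu^{-1}(0)]$ at this stage, and that is all it ends up needing.

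Your Step~2 contains a genuine gap. The duality bookkeeping you describe does not pin down the boundary multiplicities of $[SS(\jmath_*\CE_k)]$, because it only produces a tautology. Applying additivity of characteristic cycles to the four‑term sequence $0\to K_k\to\CM_k\to\jmath_*\CE_k\to C_k\to 0$ gives $[SS(\CM_k)]-[SS(\jmath_*\CE_k)]=[SS(K_k)]-[SS(C_k)]$; applying it to the Verdier‑dual sequence and using that $\BD$ preserves characteristic cycles gives $[SS(\jmath_!\BD\CE_k)]-[SS(\CM_k)]=[SS(C_k)]-[SS(K_k)]$; summing, the torsion pieces and $[SS(\CM_k)]$ cancel and you are left with $[SS(\jmath_!\BD\CE_k)]=[SS(\jmath_*\CE_k)]$, which is just the statement that Verdier duality preserves $SS$. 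No constraint on the boundary multiplicities of $[SS(\jmath_*\CE_k)]$ has been extracted; in particular one cannot rule out $K_k\neq 0$ this way. Likewise, the injection $\jmath_!\BD\CE_k\hookrightarrow\BD\CM_k$ for $k\ll 0$ gives yet another copy of the upper bound $[SS(\jmath_*\CE_k)]\leq[\mu^{-1}(0)]$, which you already had from the surjection $\CM_k\onto\jmath_*\CE_k$, not the required lower bound.

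The ingredient you are missing is the one the paper actually uses: Ginzburg's theorem \cite[Theorem~6.3]{Gi}, which identifies $[SS(\jmath_*\CE_k)]$ with the limit cycle $\lim_{a\to 0}\bigl[\bmu^{-1}(a\cdot\phi)\cap\bnu^{-1}(a)\bigr]$. Combined with the flatness of $\bmu\times\bnu$ (Lemma~\ref{muflat}(i)), this limit equals $[\bmu^{-1}(0)\cap\bnu^{-1}(0)]=[\mu^{-1}(0)]$, giving $[SS(\jmath_*\CE_k)]=[\mu^{-1}(0)]$ for every $k$; this is where the hypotheses \eqref{item1} and \eqref{item3}, through Lemma~\ref{cond_lem}, actually enter. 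The paper then closes the argument by producing the opposite inequality from the surjection of Lemma~\ref{zabyl}. Your approach cannot bypass this specialization‑of‑cycles computation by appealing to $\D$‑module duality alone.
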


\begin{proof} Given a pair of Lagrangian algebraic cycles
$Y,Y'\sset T^*X$, we write $Y\leq Y'$ whenever
the cycle $Y'-Y$ is a nonnegative integer combination
of irreducible Lagrangian subvarieties.

 The filtration on $\D_k$ induces a natural filtration
on the $\D_k$-module $\D_k/\D_k{}^{\,}\g^{\psi+k\cdot\phi}$.
One has a canonical {\em surjection} of  graded $\CO_X$-modules:
 $\gr\D_k/(\gr\D_k)\g\onto \gr(\D_k/\D_k{}^{\,}\g^{\psi+k\cdot\phi}).$
Furthermore,  there is a natural isomorphism
$\gr\D_k/(\gr\D_k)\g\cong p_\idot(\oo_{T^*X}|_{\mu\inv(0)})$.
Thus, we deduce 
$$[SS(\D_k/\D_k{}^{\,}\g^{\psi+k\cdot\phi})]\leq
[\ssup(\oo_{T^*X}|_{\mu\inv(0)})]=[\mu^{-1}(0)].
$$

Hence, Lemmas \ref{cond_lem} and \ref{muflat} imply
 that $\D_k/\D_k{}^{\,}\g^{\psi+k\cdot\phi}$ is a  holonomic
$\D_k$-module and one has
\begin{equation}\label{ineq}
[SS(\D_k/\D_k{}^{\,}\g^{\psi+k\cdot\phi})]\leq [\mu\inv(0)]=[\bmu\inv(0)\cap\bnu\inv(0)]=
 \underset{a\to0}\lim\, [\bmu\inv(a\cdot\phi)\cap\bnu\inv(a)].
\end{equation}

Next recall that, according to \cite{Gi}, Theorem~6.3, one has 
 an equality of Lagrangian cycles 
$[SS(\jmath_*\CE_k)]=\underset{a\to0}\lim\, 
[\bmu\inv(a\cdot\phi)\cap\bnu\inv(a)]$. 
Thus, from \eqref{ineq} we deduce 
$$[SS(\D_k/\D_k{}^{\,}\g^{\psi+k\cdot\phi})]\leq[SS(\jmath_*\CE_k)].$$

On the
other hand,  Lemma~\ref{zabyl} yields an opposite inequality, and we are
done.
\end{proof} 

To complete the proof of Proposition \ref{thm_cycle}, we observe
that the
characteristic cycle of the kernel of the
surjective $\D_k$-module
map
$\D_k/\D_k{}^{\,}\g^{\psi+k\cdot\phi}
\twoheadrightarrow\jmath_*\CE_k$ (for $k\ll0$) is
equal to zero,
due to Lemma \ref{SS}.
Hence the kernel itself is zero, and we are done.
\qed

\section{A triple flag variety.} 
\subsection{Horocycle spaces}
Let $G$ be a connected complex semisimpe group. 
Let $\BT$ be  the {\em abstract} Cartan torus of $G$, and
$W$  the corresponding abstract Weyl group of $G$.
Fix a Borel subgroup $B\sset G$, with the unipotent
radical $N$. Thus, we have $\BT=B/N$.

Let $\CB=G/B$ be the flag variety of $G$.
There is a canonical $G$-equivariant $\BT$-torsor
$\tB\to\CB,$ where $\tB=G/N$ is the {\em base affine
space}, and where $\BT$ acts on $G/N$ on the right.
The $G$- and $\BT$-actions on  $\tB$ commute, hence, make $\tB$
 a $G\times\BT$-variety.

Given elements $x,y\in W$, let $\BT_{x,y}$ be the image of
the torus imbedding
$\BT\into \BT\times \BT,\ t\mto x(t)\times
y(t)$.
The {\em horocycle space} associated with the pair $(x,y)$ is defined to be
 $\tB_{x,y}:=(\tB\times\tB)/\BT_{x,y}.$
There are two natural projections $\pr_x,\ \pr_y:\ \tB_{x,y}\onto\CB$,
on the first and second factor, respectively.
The left $G\times G$-action  and  the right 
$\BT\times\BT$-action make 
 $\tB_{x,y}$ a smooth $G\times G\times\BT\times
\BT$-variety. 
The right $\BT\times\BT$-action clearly
 factors through 
 the torus  $(\BT\times
\BT)/\BT_{x,y}$.
This makes the map
$\pr_x\times\pr_y:\ \tB_{x,y}\onto \CB\times\CB$ a $G\times G$-equivariant
$(\BT\times
\BT)/\BT_{x,y}$-torsor.

\subsection{}\label{hor} Write $\g,\bt$ for the Lie algebras of
the groups $G$
and $\BT,$
respectively. 
Let $\Ug$ and $\Ut$ be the corresponding enveloping algebras.
Let $\FZ$ denote the center of $\Ug$. We may (and will)
view $\Ut$ as a $\FZ$-module via  the Harish-Chandra
homomorphism $\hcc:\ \FZ\to\Ut$.

The differential of the $G\times\BT$-action on $\tB$
induces an algebra
map
$\kappa:\ \Ug\o\Ut\to\Gamma(\tB, \D_\tB)$.
It is clear that the image of $\kappa$ is contained
in $\Gamma(\tB, \D_\tB)^\BT$, the subalgebra 
of right $\BT$-invariant differential operators.
It is also easy to see that, for any  $z\in \FZ,$ one has
$\kappa(z\otimes1)=\kappa(1\otimes\hcc(z)).$
 Furthermore, it was shown in \cite{BoBr} that
 the  map $\kappa$ gives rise to an
algebra isomorphism
\beq{bobr}
\kappa:\
\Ug\o_\FZ\Ut\ \iso \ \Gamma(\tB, \D_\tB)^\BT.
\eeq

Let $\varpi_y$ be the pull-back of  $\om_\CB$, the
canonical bundle on the flag manifold $\CB$, via the projection $\pr_y:
\tB_{x,y}\onto \CB$, and define

\beq{ddy}\dd{x}{y}:=\varpi_y\o_{\oo_{\tB_{x,y}}}\D_{\tB_{x,y}}\o_{\oo_{\tB_{x,y}}}
\o\varpi_y^{-1}.
\eeq 

It is clear that $\dd{x}{y}$ is 
a TDO on $\tB_{x,y},$ moreover,
the pair $(\dd{x}{y},\ G\times G\times\BT\times\BT)$ is
 a
Harish-Chandra algebra.
Thus, one has
a canonical  algebra
map
$\Ug\o\Ug\o\Ut\o\Ut\to\Gamma(\tB_{x,y},\ \dd{x}{y})$.
We let ${\mathfrak T}_{x,y}$ denote an ideal of the algebra $\Ut\o\Ut=\UU(\bt\oplus\bt)$
generated by $\bt_{x,y}:=\Lie \BT_{x,y}$, 
a vector subspace of $\bt\oplus\bt\sset \UU(\bt\oplus\bt).$

From the  Borho-Brylinski isomorphism \eqref{bobr}
one derives an algebra isomorphism
\beq{bobr2}\kappa:\ (\Ug\o\Ug)\ \bo_{\FZ\o\FZ}\
\big[(\Ut\o\Ut)/{\mathfrak T}_{x,y}\big]\
\iso\
\Gamma(\tB_{x,y},\
\dd{x}{y})^{\BT\times\BT}.
\eeq

A $\D_{x,y}$-module
 is said to be {\em monodromic}
provided it is a  holonomic
$\D$-module which is, moreover,
$G$-equivariant with respect to
the  $G$-diagonal left action on $\tB_{x,y}$ and is also
monodromic with respect to
the right  $\BT\times\BT$-action on $\tB_{x,y}$.
\vskip 2pt

\npb{Let $\Mon(\tB_{x,y})$  denote the full abelian subcategory
of $\Lmod{\D_{x,y}}$ whose objects
are monodromic modules.}
\vskip 2pt

 A {\em complex} $\CM^\hdot\in D^b(\Lmod{\D_{x,y}})$
 is said to be  monodromic provided,
for  each integer $\ell\in \Z$, the cohomology sheaf
${\mathcal H}^\ell(\CM^\hdot)$ is a  monodromic $\D_{x,y}$-module.
\vskip 2pt
Write $\Lmod{A}$, resp. $D^b(\Lmod{A})$, for the abelian category,
resp. bounded derived category, of left modules
over $A$, an associative algebra or a sheaf of associative algebras.
\vskip 2pt

\npb{Let $\Don(\tB_{x,y})$  denote the full triangulated subcategory
of $D^b(\Lmod{\D_{x,y}})$ whose objects
are monodromic complexes.}

\subsection{}\label{sec_lamunu} {\bf In the rest of the paper we let
$G=SL_n.$}
We put  $\g:=\Lie G=\sv_n.$

The group $G$ acts naturally on the projective space $\P=\P(\C^n)$,
 and we let
$G$ act diagonally on  $\CB\times\CB\times\P$,  a `triple flag manifold'.
There is a  moment map 
 associated with the induced Hamiltonian  $G$-action on
$T^*(\CB\times\CB\times\P),$
 the total space of the
cotangent bundle.

The crucial geometric properties of 
the triple flag manifold are summarized in the following 
proposition.

\begin{proposition}\label{ein moment} \vi The group
 $G$ acts diagonally on $\CB\times\CB\times\P$ with finitely many orbits.
The orbits are parametrized by the set
of pairs $(w,\sigma),$ where $w\in \syn$ is a permutation 
 and $\sigma$ is a decreasing subsequence of 
the sequence of integers $w(1),\ldots,w(n)$.
\vskip 2pt

\vii The  moment map 
$\dis\,\mu:\
T^*(\CB\times\CB\times\P) \to \g^*\,$ is {\em flat}.
\end{proposition}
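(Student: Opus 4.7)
The plan is to deduce (ii) from (i) via Lemma \ref{muflat}, so the main work lies in the orbit classification in (i). First I check the dimension hypothesis needed for that lemma: $\dim(\CB\times\CB\times\P) = 2\binom{n}{2} + (n-1) = n^2-1 = \dim \sv_n = \dim G$, so once (i) is established (ii) will follow at once.

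For part (i), I proceed in two stages. The Bruhat decomposition identifies the $G$-orbits on $\CB\times\CB$ with $W=\syn$ via the relative position map. Fix a representative $(B_1,B_2)$ in position $w\in\syn$ and write $H_w := B_1\cap B_2$ for its (connected) diagonal stabilizer. Then the $G$-orbits on $\CB\times\CB\times\P$ projecting onto the $w$-Bruhat cell are in natural bijection with the $H_w$-orbits on $\P$, so it suffices to classify the latter for each $w$, checking both finiteness and the claimed combinatorial description.

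Next, I choose a basis $e_1,\ldots,e_n$ of $\C^n$ adapted to $(B_1,B_2)$, so that $B_1$ preserves the standard flag $F^i_1 = \langle e_1,\ldots,e_i\rangle$ while $B_2$ preserves $F^i_2 = \langle e_{w(1)},\ldots,e_{w(i)}\rangle$. Then $H_w$ is generated by the diagonal torus $T$ together with the one-parameter root subgroups $U_\alpha$ for $\alpha = \epsilon_i-\epsilon_j$ with $i<j$ and $w^{-1}(i)<w^{-1}(j)$. For $\ell = \langle \sum c_k e_k\rangle$ with support $S = \{k : c_k\neq 0\}$, a direct computation shows that the $U_\alpha$-action can eliminate an index $i\in S$ whenever there exists $j\in S$ with $j>i$ and $w^{-1}(j)>w^{-1}(i)$. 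Iterating brings $\ell$ into a minimal form with support $S=\{i_1<\ldots<i_k\}$ characterized by $w^{-1}(i_1)>\ldots>w^{-1}(i_k)$; setting $t_l := w^{-1}(i_{k-l+1})$ we get $t_1<\ldots<t_k$ with $w(t_1)>\ldots>w(t_k)$, so $\sigma = (w(t_1),\ldots,w(t_k))$ is precisely a decreasing subsequence of $w(1),\ldots,w(n)$. The torus $T$ then acts transitively on the set of lines with any fixed minimal support (after quotienting by the scaling of $\ell$), yielding a single $H_w$-orbit per minimal support. Distinctness of orbits is established by using the invariants $\dim(\ell\cap F^i_1\cap F^j_2)$ for varying $(i,j)$, which are $G$-equivariant and suffice to separate any two minimal supports.

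Once (i) is in place, part (ii) follows immediately: $G$ acts on $X=\CB\times\CB\times\P$ with finitely many orbits and $\dim X=\dim G$, so Lemma \ref{muflat}(i) gives the flatness of $\mu$. The main technical obstacle is thus the combinatorial analysis of $H_w$-orbits on $\P$, and within that, the verification that distinct minimal supports give genuinely distinct $H_w$-orbits; the flatness statement in (ii) comes for free once the finite-orbit statement has been secured.
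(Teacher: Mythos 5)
The paper handles part (i) by simply citing \cite{MWZ}, 2.11, and handles part (ii) exactly as you do, via the dimension count $\dim(\CB\times\CB\times\P)=n^2-1=\dim G$ and Lemma \ref{muflat}. Your self-contained proof of the orbit classification is therefore a genuinely different (and more informative) route: reduce along the Bruhat decomposition to classifying $H_w$-orbits on $\P$ for $H_w=B_1\cap B_2$, produce a normal form by unipotent eliminations, and then separate normal forms by invariants. The reduction and normal-form steps are correct: $H_w$ contains $U_{\epsilon_i-\epsilon_j}$ exactly for $i<j$ with $w^{-1}(i)<w^{-1}(j)$, the elimination rule you describe follows, and the torus makes all lines with a fixed support into one $H_w$-orbit. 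This does recover the pairs $(w,\sigma)$ with $\sigma$ a decreasing subsequence.

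However, the distinctness step as written has a real gap. You propose separating minimal supports by the invariants $\dim(\ell\cap F^i_1\cap F^j_2)$. For a line $\ell$ this intersection is at most one-dimensional, and it is nonzero iff $S\subset\{1,\ldots,i\}$ and $w^{-1}(S)\subset\{1,\ldots,j\}$, i.e.\ iff $\max S\leq i$ and $\max w^{-1}(S)\leq j$. For a normal form $S=\{i_1<\cdots<i_k\}$ with $w^{-1}(i_1)>\cdots>w^{-1}(i_k)$, this data only records the pair $(\,i_k,\ w^{-1}(i_1)\,)$ and forgets the rest of $S$. Concretely, take $n=4$ and $w=w_0$ the longest element, so every subset is a minimal support. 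Then $S=\{1,4\}$ and $S'=\{1,2,4\}$ give lines $\ell=\langle e_1+e_4\rangle$ and $\ell'=\langle e_1+e_2+e_4\rangle$ which lie in distinct $H_{w_0}$-orbits, yet $\dim(\ell\cap F^i_1\cap F^j_2)=\dim(\ell'\cap F^i_1\cap F^j_2)$ for all $(i,j)$ (each is $1$ exactly when $i\geq 4$ and $j\geq 4$). So these invariants do not suffice. The fix is to use instead the invariants $\dim\bigl(\ell\cap(F^i_1+F^j_2)\bigr)$ (equivalently $\dim(F^i_1+F^j_2+\ell)$). For the normal form, $\ell\subset F^i_1+F^j_2$ iff for every $m$ one has $i_m\leq i$ or $w^{-1}(i_m)\leq j$; scanning $i$ and $j$, the boundary of the region where this holds reads off the full set $\{(i_m,\,w^{-1}(i_m))\}_{m=1}^{k}$, hence recovers $S$. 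With that replacement your argument gives a complete, self-contained alternative to the citation of \cite{MWZ}.
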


\proof Part (i) is proved in \cite{MWZ}, 2.11.
Part (ii) follows from  Lemma \ref{muflat}(i)-(ii) and the equation
$$\dim(\CB\times\CB\times\P)=
\frac{n(n-1)}{2}+\frac{n(n-1)}{2}+(n-1)=n^2-1=\dim G.\eqno\Box$$
\smallskip

Next, we fix a pair of elements $x,y\in W$ and put $\yy_{x,y}=\tB_{x,y}\times\P$.
Below, we will be interested in the left $G$-diagonal action on
 $\yy_{x,y}$ as well as in the right $\BT\times\BT$-action
induced by the one on  $\tB_{x,y}$, the first factor, and  trivial along 
$\P$, the second factor. 
 The right $\BT\times\BT$-action clearly factors through
a $(\BT\times\BT)/\BT_{x,y}$-action. It follows that
 the  left  $G$-action  
factors through the adjoint group $G/\text{Center}(G).$
These left and right actions make $\yy_{x,y}$ a
$G\times\BT\times\BT$-variety, and there is an
 induced Hamiltonian  $G\times\BT\times\BT$-action on
the cotangent bundle
$T^*\yy_{x,y}$. 

Let $\bt_{x,y}^\perp\sset \bt^*\oplus\bt^*$ denote the
annihilator of the subspace $\bt_{x,y}\sset \bt\oplus\bt.$
It is immediate to see that the image of an associated moment map
is automatically contained in the subspace $\g^*\times\bt_{x,y}^\perp\sset \g^*\times
\bt^*\times\bt^*.$

From Proposition \ref{ein moment} one easily deduces the following

\begin{corollary}\label{zwei}
\vi The group
 $G\times\BT\times\BT$ acts on $\yy_{x,y}$ with finitely many orbits.

\vii The  moment map 
$\dis\,
\mu:\
T^*\yy_{x,y}\too\g^*\times\bt_{x,y}^\perp
\en$ is {\em flat}.\qed
\end{corollary}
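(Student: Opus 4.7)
The plan is to reduce both statements to Proposition~\ref{ein moment} and to apply Lemma~\ref{muflat}. The key observation is that the quotient map $\pi:\ \yy_{x,y}=\tB_{x,y}\times\P\ \onto\ \CB\times\CB\times\P$ (induced by $\tB\to\CB$ on each factor of $\tB\times\tB$) is a $G$-equivariant torsor under the quotient torus $T':=(\BT\times\BT)/\BT_{x,y}$, because the subtorus $\BT_{x,y}\sset\BT\times\BT$ already acts trivially on $\yy_{x,y}$ by definition of $\tB_{x,y}$ and since $\BT\times\BT$ acts trivially on the factor $\P$.

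For part (i), the $G$- and $T'$-actions commute, so the torsor $\pi$ sends $G\times T'$-orbits in $\yy_{x,y}$ surjectively onto $G$-orbits in $\CB\times\CB\times\P$. Since each fiber of $\pi$ is a single $T'$-orbit, and a $G\times T'$-orbit is a union of $T'$-orbits in the preimage of a $G$-orbit, this map is in fact a bijection on orbits. Proposition~\ref{ein moment}(i) then gives the finiteness.

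For part (ii), first note that, because the effectively acting group is $G\times T'$, the moment map for the $G\times\BT\times\BT$-action on $T^*\yy_{x,y}$ automatically lands in $\g^*\times\bt_{x,y}^\perp$, which is naturally identified with $\Lie(G\times T')^*$. To invoke Lemma~\ref{muflat} for this effective action, I will check that
\[
\dim \yy_{x,y}\ =\ \dim (G\times T').
\]
We have $\dim \tB_{x,y}=2\dim\tB-\dim\BT_{x,y}=2(\dim\CB+\dim\BT)-\dim\BT=n(n-1)+(n-1)$, hence
$\dim\yy_{x,y}=n(n-1)+(n-1)+(n-1)=n^2+n-2,$
while $\dim G+\dim T'=(n^2-1)+(n-1)=n^2+n-2$. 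Combined with part (i), Lemma~\ref{muflat}(i) then implies flatness.

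The only mildly subtle point, and the one I would be most careful about, is the identification of the effectively acting group and the corresponding target $\g^*\times\bt_{x,y}^\perp$ of the moment map: one must verify that $\BT_{x,y}\sset \BT\times\BT$ does act trivially on $\yy_{x,y}$ (which follows from the very definition $\tB_{x,y}=(\tB\times\tB)/\BT_{x,y}$ together with triviality on $\P$), so that Lemma~\ref{muflat} can be applied in the form stated there, with matching dimensions. Everything else is a direct comparison of dimensions and a transport of orbit structure along the torus torsor $\pi$.
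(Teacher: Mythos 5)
Your argument is correct and is essentially the deduction the paper leaves implicit (the paper only asserts the corollary "easily follows" from Proposition~\ref{ein moment} with no written proof): you pass through the $(\BT\times\BT)/\BT_{x,y}$-torsor $\yy_{x,y}\to\CB\times\CB\times\P$ to get the orbit bijection for part~(i), and for part~(ii) you re-apply Lemma~\ref{muflat} to the effectively acting group $G\times(\BT\times\BT)/\BT_{x,y}$ after verifying the dimension match, exactly parallel to how Proposition~\ref{ein moment}(ii) itself is proved. The dimension count $\dim\yy_{x,y}=n^2+n-2=\dim G+\dim\BT$ and the identification of $\g^*\times\bt_{x,y}^\perp$ with the dual of the Lie algebra of the effective group are both right, so nothing is missing.
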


\subsection{}\label{bbmon}
For any $c\in\C$, let $\D_{\P}^{c}$ denote the sheaf of TDO on
$\P$ with twist `$c$'.
Given $x,y\in W,$ we write
$\D_{x,y}^c:=\D_{x,y}\boxtimes
\D_{\P}^{c}$ for the corresponding TDO on $\yy_{x,y}=\tB_{x,y}\times\P$.
Thus, $(\D_{x,y}^c,\ 
G\times\BT\times\BT)$ is a Harish-Chandra algebra.

We have the notion
of a monodromic $\D^c_{x,y}$-module, resp. monodromic complex,
defined the same way as we have done in \S\ref{hor}, with
the space $\tB_{x,y}$ being now replaced by $\yy_{x,y}$.
Similarly to section \ref{hor}, one 
defines an abelian category
 $\Mon_c(\yy_{x,y})
\sset \Lmod{\D^c_{x,y}},$ of monodromic $\D^c_{x,y}$-modules,
 resp. triangulated category
$\Don_c(\yy_{x,y})
\sset D^b(\Lmod{\D^c_{x,y}}),$ of monodromic complexes.

Corollary \ref{zwei}(i) implies the following
\begin{corollary}\label{hholl}
Any object of $\Mon_c(\yy_{x,y})$ is a holonomic $\D$-module
with regular singularities.
\end{corollary}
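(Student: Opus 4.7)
The plan is to invoke Lemma~\ref{mon} for the reductive group $H := G \times \BT \times \BT$, which acts on $\yy_{x,y}$ with finitely many orbits by Corollary~\ref{zwei}(i), together with the locally trivial TDO $\D^c_{x,y}=\D_{x,y}\boxtimes\D^c_\P$ (local triviality is inherited from each tensor factor). By definition every $M\in\Mon_c(\yy_{x,y})$ is already holonomic, so only regularity at every point of $\yy_{x,y}$ needs to be verified.

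The key intermediate step is to decompose a general monodromic $M$ into (subquotients that are) strictly $(H,\chi)$-monodromic modules for a finite list of characters $\chi$. From the diagonal $G$-equivariance together with the $\BT\times\BT$-monodromicity one obtains an infinitesimal action $\sigma:\Lie H\to \End(M)$ weakly compatible with $\D^c_{x,y}$; the ``twist'' $\tau:=\sigma-\kappa$ is then $\D^c_{x,y}$-linear, vanishes on $\g$ by the strong $G$-equivariance, and acts locally finitely on the abelian factor $\bt\oplus\bt$ by $\D^c_{x,y}$-linear operators. Since $M$ is finitely generated over $\D^c_{x,y}$ and $\tau$ commutes with the $\D^c_{x,y}$-action, only finitely many generalized eigenvalues $\bar\psi:\bt\oplus\bt\to\C$ of $\tau$ occur, and $M$ decomposes as a finite direct sum $M=\bigoplus_{\bar\psi}M_{\bar\psi}$ of $\D^c_{x,y}$-submodules. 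Each summand $M_{\bar\psi}$ carries a finite filtration by kernels of powers of $\tau-\bar\psi$, whose successive quotients are strictly $(H,(0,\bar\psi))$-monodromic in the sense of \S\ref{psi}.

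By Lemma~\ref{mon}, each such subquotient is regular at every point of $\yy_{x,y}$; since regularity of holonomic modules is preserved under extensions, $M$ itself is regular, completing the proof.

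The only delicate point is the finiteness of the generalized eigenspace decomposition. This relies on two things: first, the $\D^c_{x,y}$-linearity of $\tau$, which guarantees that generalized eigenspaces are $\D^c_{x,y}$-submodules; second, the combination of finite generation of $M$ over $\D^c_{x,y}$ with local finiteness of $\tau$, ensuring that a finite set of $\D^c_{x,y}$-generators can be chosen inside a finite-dimensional $\tau$-stable subspace, and hence that only finitely many generalized $\tau$-eigenvalues appear. All remaining work is a direct reduction to the already-proven Lemma~\ref{mon}.
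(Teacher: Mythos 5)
Your proof is correct and takes the same route the paper implicitly intends: the paper dispatches Corollary~\ref{hholl} with the single phrase ``Corollary~\ref{zwei}(i) implies the following,'' meaning one is meant to feed the finiteness of $G\times\BT\times\BT$-orbits on $\yy_{x,y}$ into Lemma~\ref{mon}. You have correctly identified the gap the paper leaves unstated — that Lemma~\ref{mon} is formulated only for $(H,\psi)$-monodromic modules with a \emph{fixed} character $\psi$, whereas a general object of $\Mon_c(\yy_{x,y})$ has only locally finite $\bt\oplus\bt$-monodromy — and your reduction (generalized eigenspace decomposition for the $\D$-linear twist $\tau=\sigma-\kappa$, finite because $M$ is holonomic hence coherent, followed by the $\ker(\tau-\bar\psi)^k$ filtration whose subquotients are strictly $(H,(0,\bar\psi))$-monodromic, then closure of regularity under extensions and direct sums) is the standard and correct way to bridge it.
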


Let $\bt^*_\Z\sset\bt^*$ denote the weight lattice of $\BT$
and, given $x,y\in W$, write $\pr_{x,y}=\pr_x\times\pr_y\times \Id_\P:\
\yy_{x,y}=\tB_{x,y}\times\P\to \CB\times\CB\times\P$ for the natural projection.

For any monodromic complex $\CV\in\Don_c(\yy_{x,y})$
there is a natural monodromy action of
the  fundamental group of the torus $\BT\times\BT$
 in the stalks of 
$(\pr_{x,y})_\idot{\mathcal H}^\hdot(\CV)$,
a sheaf theoretic  direct image of the cohomology of $\CV$.
Note that one may view the
fundamental group of $\BT\times\BT$ as a lattice $\pi_1(\BT\times\BT)\sset\bt\oplus\bt$.

Let $\bar\la,\bar\nu\in\bt^*/\bt^*_\Z$. We say that  $\CV\in\Don_c(\yy_{x,y})$ has
monodromy $(\bar\la,\bar\nu)$ provided, for any element 
$(t_1,t_2)\in\pi_1(\BT\times\BT)\sset \bt\times\bt$, 
 the corresponding  monodromy  $(t_1,t_2)$-action in 
$(\pr_{x,y})_\idot{\mathcal H}^\hdot(\CV)$ is a linear operator of the form $\dis
e^{2\pi\sqrt{-1}(\langle\la,t_1\rangle+\langle\nu,t_2\rangle)}\Id + u,$ where 
$u$ is a nilpotent operator. In this formula, $\langle\la,t_1\rangle,$
resp $\langle\nu,t_2\rangle$, denotes the value at  $t_i\in\bt$
of an arbitrary representative in $\bt^*$ of the corresponding element
 $\bar\la,\bar\nu\in\bt^*/\bt^*_\Z$. 
\vskip 2pt

\npb{Let $\Mon_{\bar\la,\bar\nu,c}(\yy_{x,y})$ be the full abelian
subcategory
of $\Mon_c(\yy_{x,y})$, resp.
 $\Don_{\bar\la,\bar\nu,c}(\yy_{x,y})$  be the full triangulated subcategory
of  $\Don_c(\yy_{x,y})$, whose objects
have monodromy ~$(\bar\la,\bar\nu)$.}
\vskip 2pt

It is clear that the category  $\Don_{\bar\la,\bar\nu,c}(\yy_{x,y})$ is trivial
unless the character $(\bar\la,\bar\nu)$ restricts to the unit
character of the subgroup $\pi_1(\BT_{x,y})\sset \pi_1(\BT\times\BT)$.
Let $\bar\bt_{x,y}^\perp\sset \bt^*/\bt^*_\Z\oplus\bt^*/\bt^*_\Z$
be the set of such characters, i.e., the set pairs $(\bar\la,\bar\nu)$ such
that the linear function $(\la,\nu)$ takes integer values
on the lattice $\pi_1(\BT_{x,y})$, for some
(equivalently, any)  representative $(\la,\nu)\in\bt^*\oplus\bt^*$
of the element $(\bar\la,\bar\nu)$. 

Thus, there is
 a canonical direct sum decomposition
$$
\Mon_c(\yy_{x,y})=\underset{(\bar\la,\bar\nu)\in\bar\bt_{x,y}^\perp}{\mbox{$\bigoplus$}}
\Mon_{\bar\la,\bar\nu,c}(\yy_{x,y}),
\quad\text{resp.}\quad
\Don_c(\yy_{x,y})=\underset{(\bar\la,\bar\nu)
\in\bar\bt_{x,y}^\perp}{\mbox{$\bigoplus$}}
\Don_{\bar\la,\bar\nu,c}(\yy_{x,y}).
$$
 
For any object $\CV\in \Don_c(\yy_{x,y})$, we write
$\CV=\oplus_{\bar\la,\bar\nu}\,\CV^{(\bar\la,\bar\nu)}$ for the 
corresponding direct sum decomposition. 

\subsection{Convolution}\label{conv} Let $\rho\in\bt^*$
denote the half-sum of positive roots.
Associated with any triple $x,y,z\in W$, 
there is a standard convolution functor, cf. \cite{BG}, \S 5:
\beq{convolution}
*:\
\Don_{\bar\la,\bar\nu,c}(\yy_{x,y})\times
\Don_{-\bar\nu, \bar\la',c'}(\yy_{y,z})\too
\Don_{\bar\la,\bar\la',c+c'}(\yy_{x,z}).
\eeq

To define convolution \eqref{convolution}, one writes $\BT_{x,y,z}$ for
the image of
a torus imbedding
$\BT\into \BT\times \BT\times \BT,$ given by
$t\mto x(t)\times y(t)\times z(t).$ Clearly,
 $(\tB\times\tB\times\tB)/\BT_{x,y,z}$ is a smooth
variety and there are 3 natural projections  along various factors
$\tB$,
eg.
 ${\mathsf q}_{x,y}:\ (\tB\times\tB\times\tB)/\BT_{x,y,z}\to
\tB_{x,y}$.
We may extend each of these  morphisms
to a cartisian product with a copy of the
projective space $\P$. This way, we get e.g.
a map ${\mathsf q}_{x,y}\boxtimes\Id_\P:\
[(\tB\times\tB\times\tB)/\BT_{x,y,z}]\times\P\to
\tB_{x,y}\times\P$, and also a  map ${\mathsf q}_y\boxtimes\Id_\P$.

There is  also a natural map
${\mathsf q}_y: 
(\tB\times\tB\times\tB)/\BT_{x,y,z}\onto\tB$, the projection
to the middle factor.
The definition of the TDO $\dd{x}{y}$, see \eqref{ddy},
combined with the canonical isomorphism
$\D_\CB^\opp\cong \om_\CB\o_{\CO_\CB}\D_\CB\o_{\CO_\CB}\om_\CB\inv$
insure that, for any $\dd{x}{y}$-module $M$,
the sheaf ${\mathsf q}_{x,y}^*M$ has a natural right
${\mathsf q}_y^*\D_\tB$-module structure.

In formula \eqref{star} below, we  use simplified notation and write
 ${\mathsf q}_{x,y}$ for ${\mathsf q}_{x,y}\boxtimes\Id_\P$.
With that understood,
the  convolution of any pair of objects 
$\CM\in \Don_{\bar\la,\bar\nu,c}(\yy_{x,y})$ and
$\CN\in \Don_{-\bar\nu,\bar\la',c'}(\yy_{y,z})$ is defined by
\beq{star}\CM * \CN:=(R{\mathsf q}_{x,z})_\bullet\big({\mathsf q}_{x,y}^*\CM\
\stackrel{L}{\bo}_{({\mathsf q}_y^*\D_\tB)\boxtimes\oo_\P}\
{\mathsf q}_{y,z}^*\CN\big).
\eeq

\subsection{}\label{R} Let $w_0\in W$ denote the  element of
maximal
length. The  variety $\tB_{1,w_0}$ will play a special
role.
This variety has a $G$-equivariantly trivialized
 canonical bundle. Note also that, for any $y\in W$, we have
$\tB_{y,w_0y}=\tB_{1,w_0}$, hence $\yy_{y,w_0y}=\yy_{1,w_0}.$

We are going
to introduce, following  \cite{BG}, \S5,  a pair of  pro-objects of the
category $\Lmod{\D_{y,w_0y}}$ that will be important  in subsequent sections.
To this end,  view $\tB_{y,w_0y}$
as a $G\times \BT\times\BT$-variety
where $G$, the first factor,
acts diagonally on  $\tB_{y,w_0y}$ on the left,
and the group $\BT\times\BT$ acts naturally on the right.
Let $ {\scr U}$ be the unique Zariski open dense
$G\times \BT\times\BT$-orbit in $\tB_{y,w_0y}$.
We have the following  diagram involving  natural projections 
and an open imbedding:
\beq{jf}
\xymatrix{
G\backslash{{\scr U}}\ 
&\ {\scr U}\ \ar@{->>}[l]_<>(0.5){h}
\ar@{^{(}->}[r]^<>(0.5){\jmath}&
\ \tB_{y,w_0y}&\yy_{y,w_0y}=\tB_{y,w_0y}\times\P\ar@{->>}[l]_<>(0.5){\pr_1}.
}
\eeq

For any $u\in {\scr U}$, the isotropy group of $u$
with respect to the left $G$-diagonal action is 
a maximal torus in $G$, in particular, it is
connected.
We have an isomorphism $(\BT\times\BT)/\BT_{y,w_0y}\iso\BT,\
(t_1,t_2)\mto t_1.$
The right $\BT\times\BT$-action
on $\scr U$ descends to a well defined
$(\BT\times\BT)/\BT_{y,w_0y}$-action
on the orbit space $G\backslash{ {\scr U}}$
and makes the latter space a $\BT$-torsor.

Define a linear involution
\beq{dag}
(-)^\dag:\ \bt^*\to\bt^*,\en \la\mto\lambda^\dag:=-w_0\lambda.
\eeq

Given  $\la\in \bt^*$, let ${\mathcal J}^\la$ be a rank 1 local
system on the $\BT$-torsor $G\backslash{ {\scr U}}$
 with monodromy $e^{2\pi\sqrt{-1}\la}$.
Thus, $h^*({\mathcal J}^\la)$, cf. \eqref{jf}, is a  local
system on ${\scr U}$ with monodromy $(\la,\la^\dag)\in \bt^*\times\bt^*.$
Further, let $\CE^\infty$ be a  pro-unipotent local system on  $G\backslash{ {\scr U}}$,
 cf. \cite{BG}, page 18. Such a local system is unique
up to isomorphism, and we put $\wh{\mathcal J}^\la:={\mathcal J}^\la\o\CE^\infty.$
All stabilizers of  the
$G$-action in  $ {\scr U}$ being connected,
it follows that $h^*(\wh{\mathcal J}^\la)$ is a
(projective limit of) $G$-equivariant
local systems on $ {\scr U}$.

We use diagram \eqref{jf} to define the following pair of
(projective limits of)
$G$-equivariant holonomic $\D_{y,w_0y}$-modules (cf. also 
\cite[formulas (5.9.2), (5.22)]{BG}):
\beq{rr}
\CR^{\la,\la^\dag}_*:=\pr_1^*\ccirc \jmath_*\ccirc h^*(\wh{\mathcal J}^\la),\en\;\CR^{\la,\la^\dag}_!:=\pr_1^*\ccirc \jmath_!\ccirc h^*(\wh{\mathcal
J}^\la)
\in \textsl{lim}^{\,} \textsl{proj}\,\Mon_{\bar\la,\bar\la^\dag,0}(\yy_{y,w_0y}).
\eeq

For any $x,y\in W$ and $\nu\in\bt^*$,
 convolution with the above objects yields the following 
{\em mutually inverse} (triangulated) equivalences, see \cite{BG} \S 5:
\begin{align}
&\Don_{\bar\nu,-\bar\la,c}(\yy_{x,y})\too
\Don_{\bar\nu,\bar\la^\dag,c}(\yy_{x,w_0y}),\quad
\CM\mapsto \CM*\CR^{\la,\la^\dag}_!\label{rconv}\\
&\Don_{\bar\nu,\bar\la^\dag,c}(\yy_{x,w_0y})\too\Don_{\bar\nu,-\bar\la,c}(\yy_{x,y}),
\quad
\CV\mapsto \CV*\CR^{-\la^\dag,-\la}_*.\nonumber
\end{align}


\section{Mirabolic $\D$-modules}
\subsection{Character $\D$-modules}\label{hor2} Recall
that  $G=SL_n$ and write  $\D_G$ for the sheaf of differential operators on
$G$.

The group $G\times G$ acts on $G$  by
left and right translations,  via $\ (g_1\times g_2):\ g\mto g_1gg_2\inv$.
The differential of that action induces a linear
map from $\g\times\g$ to the Lie algebra of vector fields on $G$.
The latter map extends uniquely to  an associative algebra
homomomorphism $\lr:\ \Ug\otimes(\Ug)^\opp\to\D(G)$.
 
It is clear that the sets
 $\lr(\FZ\o 1),\  \lr(1\o \FZ)$ are contained
in $\D(G)^{G\times G}\sset \D(G)$, the subalgebra of
 {\em bi-invariant} differential operators on $G$. 
In fact, it is easy to see that one has
$\lr(z\otimes1)=\lr(1\otimes \tau(z))$,
where $\tau:\ \Ug\iso(\Ug)^\opp$ is an algebra isomorphism
defined on generators $x\in\g$ by
$\tau(x)=-x$. 
In this way,
one obtains a well defined, injective
 algebra morphism 
$$\lr\ccirc (\Id\times\tau):\
\Ug\otimes_\FZ\Ug\ \into\ \D(G),\quad
u\o u'\mto
\lr(u\o\tau(u')).$$

Next, we put   $\X:=G\times\P$. Recall
that $\D_{\P}^{c}$ stands for the sheaf of TDO on
$\P=\P^{n-1},$ with twist  $c\in\C$. We
let $\dis\dxc:=\D_G\boxtimes\D_{\P}^{c}$
be an associated TDO on $\X$, and write $\D(\P,c):=\Ga(\P,
\D_{\P}^{c}),$ resp.
$\D(\X,c)=\Ga(\X, \dxc)=\Ga(G,\D_G)\o \D(\P,c),$
for the corresponding algebras of global sections.
Let $\zeta$ denote the composite of the following algebra maps
\beq{lrc}
\zeta:\
\xymatrix{
 (\Ug\otimes_\FZ\Ug) \o \D(\P,c)\
 \ar@{^{(}->}[rrr]^<>(0.5){[\lr\ccirc(\Id\o\tau)]\o\Id}&&&
\ \D(G) \o \D(\P,c) \ar@{=}[r]& \D(\X, c).
}
\eeq

Further, we have an obvious imbedding $\upsilon: \FZ \into (\Ug\otimes_\FZ\Ug) \o \D(\P,c),\
z\mto (z\o1)\o 1=(1\o z)\o 1$.
\medskip

Recall  that an action of an algebra $A$  on an $A$-module
$M$ is said to be {\em locally finite} if, for
any $m\in M$, one has $\dim A\cd m<\infty$.

Now, we let $G$ act on itself via the adjoint action, and let $G$ act
diagonally on  $\X=G\times\P$.

\begin{definition} A $\dxc$-module $\CF$ 
 is called  a (mirabolic) {\em character
module} if the following two conditions hold,
cf. \cite{GG}, Remark 5.2, and also \cite{FG}, \S\S 4-5:

\begin{enumerate}

\item $\CF$ is a $G$-equivariant $\D$-module with respect to the $G$-diagonal
action on $\X=G\times\P$.

\item
The left  $\FZ$-action on $\Gamma(\X,\CF)$, via the  imbedding $\zeta\ccirc\upsilon$, is locally finite.
\end{enumerate}
\end{definition}

\npb{Let  $\ccat$ be the full abelian
subcategory
of $\Lmod{\D_{\X}^{c}}$ whose objects are  character
modules.}

\subsection{Spectral decomposition} \label{Xsec}
Given a commutative algebra $A$, write $\max(A)$ for the set of maximal
ideals in $A$. For  $\fa\in\max(A)$,
and an $A$-module $M$, we put
\beq{fa}
M^{(\fa)}:=\{m\in M\mid \exists \ell=\ell(m)\gg0\en
\text{such that}\en \fa^\ell\cdot m=0\}.
\eeq

If, moreover, the $A$-action on $M$ is locally finite, then one
has a canonical $A$-stable {\em spectral decomposition}
\beq{sp}M=\mbox{$\bigoplus$}_{\fa\in\max(A)}\ M^{(\fa)}.\eeq

First, we apply \eqref{sp} in the case where
$A:=\Ut\o\Ut$.  Given $\la\in\bt^*$, let 
$I_\la\sset\Ut$ denote a maximal
ideal generated by the elements $\{x-\la(x),\ x\in\bt\}$.
Similarly,  associated with any $\la\times\nu\in\bt^*\times\bt^*,$
there is a maximal ideal
$I_{\la,\nu}:=I_\la\o\Ut+\Ut\o I_\nu\in\max(\Ut\o\Ut).$
Thus,  for a locally finite $\Ut\o\Ut$-module $M$,
one has a spectral decomposition
$M=\bigoplus_{\la\times\nu\in\bt^*\times\bt^*}\
M^{I_{\la,\nu}}.$

Next, we take
 $A=\FZ$, the center of $\Ug$. Given $\theta\in\bt^*/W$, let
$\FZ_\theta\sset\FZ$ be the maximal ideal corresponding to
$\theta$ via the Harish-Chandra isomorphism $\hcc: \FZ\iso\C[\bt^*/W].$
Thus, for any locally finite $\FZ$-module $M$, one has a spectral
decomposition $\dis M=\bop_{\th \in\bt^*/W}\ M^{(\FZ_\theta)}$.

Similarly,
let $M\in D^b(\Lmod{\FZ})$ be a complex such
the induced $\FZ$-action on $H^\hdot(M)$ is locally finite.
Then, one can show that
there is a canonical  direct sum
decomposition 
$M=\bop_{\th \in\bt^*/W}\ M^{(\th)}$ where
$M^{(\th)}\in D^b(\Lmod{\FZ})$ are such that, for
each $\th$,
one has $H^\hdot(M^{(\th)})=[H^\hdot(M^{(\th)})]^{(\FZ_\theta)}$.

We write
$\FZ_\th^c\sset \D(\X, c)$ for $\zeta\ccirc\upsilon(\FZ_\th)$, the image of
$\FZ_\th$ under the composite imbedding $\zeta\ccirc\upsilon: \FZ\to\D(\X,
c)$, see \eqref{lrc}.
For any character $\D$-module
$\CF\in {\scr C}_c(\X)$, one 
has a vector space decomposition
$H^\hdot(\X,\CF)=\bop_{\th\in \bt^*/W}\ 
H^\hdot(\X,\CF)^{(\FZ_\th^c)}.$

It will be convenient to introduce 
a semi-direct product $\waff:=W\ltimes\bt^*_\Z$,
an (extended) affine Weyl group. The group $\waff$ acts naturally on $\bt^*$ by
affine linear transformations.  Let  $\Th\in \bt^*/\waff$.
It is easy to see that,
for  any
$\CF\in {\scr C}_c(\X)$, the vector space
$$H^\hdot(\X,\CF)\langle\Th\rangle
:=\bop_{\{\th\in \bt^*/W\,\, |\,\, \th\,(\operatorname{mod}\, \waff)=\Th\}}\ 
H^\hdot(\X,\CF)^{(\FZ_\th^c)}$$
is a $\D(\X, c)$-submodule in 
$H^\hdot(\X,\CF)$. Furthermore, there is
a unique $\D$-submodule $\CF\langle\Th\rangle\sset \CF$
such that, one has $H^\hdot(\X,\CF\langle\Th\rangle)=H^\hdot(\X,\CF)\langle\Th\rangle$.

Let ${\scr C}_{\Th,c}(\X)$ be a full subcategory
of ${\scr C}_c(\X)$ whose objects satisfy
$\CF=\CF\langle\Th\rangle.$
This way,
following the strategy of \cite{G}, one obtains
 a canonical spectral decomposition,
cf. \cite{G}, Theorem 1.3.2:
\beq{ccharacter}
{\scr C}_c(\X)=\underset{\Th \in\bt^*/\waff}{\bop}\ {\scr C}_{\Th,c}(\X),
\quad \CF=\underset{\Th \in\bt^*/\waff}{\bop}\ \CF\langle\Th\rangle,
\qquad \CF\langle\Th\rangle\in {\scr C}_{\Th,c}(\X),\ \forall \Th.
\eeq

\npb{Let  $\dcat$, resp.  $\dcatchi$, denote the full triangulated subcategory
of $D^b(\Lmod{\D_{\X}^{c}})$ whose objects
are complexes $\CF^\hdot$ such that for the corresponding
 cohomology sheaves, one has
${\mathcal H}^j(\CF^\hdot)\in \ccat,$
resp. ${\mathcal H}^j(\CF^\hdot)\in \ccat$ and  ${\mathcal H}^j(\CF^\hdot)=[{\mathcal
H}^j(\CF^\hdot)]\langle\Th\rangle,$ for any 
$j\in \Z$, resp.  any $\Th \in\bt^*/\waff$.}
\bigskip

\subsection{The functors $\ch$ and $\hc$}
\label{dva} An important role in what follows will be played by a diagram
\beq{maps}
\xymatrix{
&\ G\times(\tB/\BT)\times\P\ \ar[dl]_<>(0.6){p}\ar[dr]^<>(0.6){q}&\\
{\BX=G\times\P} && \yy_{1,1}
}\eeq
In this diagram, 
the maps $p$ and $q$
are given by $p(g,\tilde{x},l):=(g,l)$ and
$q(g,\tilde{x},l):=(g\tilde{x},\tilde{x}, g l)$, respectively.
The group $G$ acts  diagonally on all cartesian products involved
in the above diagram, and acts on $G$ by
conjugation.  With these $G$-actions, all maps in
\eqref{maps} become $G$-equivariant morphisms.

Recall the notation $\om_y$ for the
pull-back of  $\omega_\CB$, the canonical bundle on the flag manifold,
via the second projection $\tB_{x,y}=(\tB\times\tB)/\BT_{x,y}\to\CB$.
Given a quasi-coherent sheaf $\CV$ on $\yy_{x,y}=\tB_{x,y}\times\P$, we will often
abuse the notation and write $\om^{\pm 1}\o\CV$ for
$\dis(\om^{\pm 1}_y\boxtimes\CO_\P)\o_{_{\CO_{\tB_{x,y}\times\P}}}\CV$.


Now, fix  $\la\in\bt^*$, and let $\bar\la\in\bt^*/\bt_\Z^*$, resp.
$\Th\in \bt^*/\waff$, be the image of $\la$.
We use diagram \eqref{maps} and introduce a pair of  functors,
$\ch$ and $\hc$,
given, for any $\CF\in\dcatchi,$
 resp. $\CV\in \Don_{\bar\la,\bar\la^\dag,c}(\yy_{1,w_0}),$ by the
formulas, cf. \eqref{rconv} and \cite[\S 8]{G},
\begin{align*}
\HC(\CF):=
\big(\om\inv\o q_!p^*(\CF)\big)
\,*\,\CR^{\la,\la^\dag}_![\dim\CB],\qquad
\ch(\CV):=p_*q^!\big(\om\o (\CV*\CR^{-\la^\dag,-\la}_*)\big)[-\dim\CB].
\end{align*}

\begin{proposition}\label{hc_prop} \vi The above formulas give
an adjoint pair, $(\ch,\hc)$, of triangulated functors
$$\xymatrix{
\dcatchi{\hphantom{x}}
\ar@<0.5ex>[rr]^<>(0.5){\,\HC\,}&&
{\hphantom{x}}
\Don_{\bar\la,\bar\la^\dag,c}(\yy_{1,w_0}).\ar@<0.5ex>[ll]^<>(0.5){\,\ch\,}
}
$$

\vii   Any character module $\CF\in{\scr C}_{\Th,c}$ is 
regular holonomic and the functor
 $\ch\ccirc\HC$ contains  $\Id_{\dcat}$, the identity functor,
as a direct summand.
\end{proposition}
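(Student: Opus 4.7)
The proof naturally breaks into two parts of very different character; part (i) is formal, while part (ii) carries all the content. For part (i), I plan to assemble the adjunction from standard six-functor adjunctions of the constituent operations. The map $p$ is smooth proper of relative dimension $\dim\CB$, yielding the adjunction $(p^*, p_*)$; the map $q$ gives $(q_!, q^!)$; tensoring with $\om^{\pm 1}$ are mutually inverse equivalences between modules over the two TDOs that differ by the class of $\om$; and, crucially, by \eqref{rconv} convolution with $\CR^{\la,\la^\dag}_!$ and with $\CR^{-\la^\dag,-\la}_*$ are mutually inverse (hence adjoint) equivalences of the relevant monodromic categories. The shifts $[\pm\dim\CB]$ built into the definitions of $\ch$ and $\HC$ are exactly the corrections needed so that the composite of these adjunctions assembles to the claimed adjoint pair. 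This step is essentially a diagram chase.

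For part (ii), my plan is to establish the direct-summand statement first, and then deduce regular holonomicity from it. Substituting the definitions of $\ch$ and $\HC$ and using associativity of convolution together with the cancellation $\CR^{\la,\la^\dag}_! * \CR^{-\la^\dag,-\la}_* \cong \bone$ implied by \eqref{rconv}, one checks that the twists by $\om^{\pm 1}$ and the shifts $[\pm\dim\CB]$ all cancel, leaving
\[
\ch\HC(\CF) \;\cong\; p_* q^! q_! p^* \CF.
\]
To analyze this, I would compute $q^! q_!$ by base change along the self-fiber product $Z := W \times_{\yy_{1,1}} W$ where $W := G\times(\tB/\BT)\times\P$, whose irreducible components are controlled by the finite $G\times\BT\times\BT$-orbit stratification from Corollary \ref{zwei}(i). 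The diagonal component $W \hookrightarrow Z$ contributes $p_* p^* \CF$ to $\ch\HC(\CF)$, and since $p$ is smooth proper with fibers $\CB$ the projection formula gives $p_* p^* \CF \cong \CF \otimes_\C H^\hdot(\CB)$, which splits $\CF$ off as a direct summand via the inclusion of $H^0(\CB)=\C$. The other strata of $Z$ produce the remaining summands of $\ch\HC$, which is precisely why $\ch\HC$ only \emph{contains} (rather than equals) $\Id$.

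Regular holonomicity of $\CF \in \ccat$ then follows from the direct-summand statement: $\HC(\CF)$ lies in $\Mon_{\bar\la,\bar\la^\dag,c}(\yy_{1,w_0})$, every object of which is regular holonomic by Corollary \ref{hholl}; the functor $\ch$ is built from $p_*$, $q^!$, line-bundle twists, and convolution, all of which preserve regular holonomicity; hence $\ch\HC(\CF)$ is regular holonomic, and so is its direct summand $\CF$. The main obstacle I expect is the explicit decomposition of $Z$ and the verification that the diagonal's contribution yields $\Id$ once the $\om^{\pm 1}$ and mirabolic twist $c$ are correctly tracked through the base-change formula, together with constructing a canonical splitting of the counit $\ch\HC \to \Id$ on the diagonal summand. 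This parallels the analysis in \cite{G} and in Lusztig's character-sheaf setting, but the mirabolic $\P^{n-1}$ factor will introduce new combinatorial features requiring careful handling.
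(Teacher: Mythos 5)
Part (i) of your proposal is essentially correct and matches the formal nature of the paper's argument: the adjunction does assemble from the standard pairs $(p^*,p_*)$, $(q_!,q^!)$, the mutually inverse $\om^{\pm1}$ twists, and the mutually inverse (hence adjoint) convolution equivalences of \eqref{rconv}, with the $[\pm\dim\CB]$ shifts arranged so that the composites cancel. Nothing to object to there.

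For part (ii) your geometric skeleton---reducing to $p_*q^!q_!p^*\CF$ and analyzing $q^!q_!$ via the self-fiber product $Z=W\times_{\yy_{1,1}}W$---is indeed the skeleton of the Mirkovi\'c--Vilonen/Ginzburg argument that the paper invokes, and the final step (once the direct-summand claim is in hand, deduce regular holonomicity of $\CF$ from that of $\ch\HC(\CF)$, the latter via Corollary \ref{hholl} and stability of RH under $p_*,q^!$, twists, and convolution) is logically sound. There is, however, a genuine gap in the middle. The closed diagonal $\Delta_W\hookrightarrow Z$ does not automatically contribute a direct \emph{summand}; the open/closed decomposition of $Z$ by $\Delta_W$ only produces a distinguished triangle of the shape
\[
p^*\CF\longrightarrow q^!q_!p^*\CF\longrightarrow (\text{off-diagonal part})\longrightarrow[1],
\]
and the entire content of the proposition is showing this triangle splits. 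That splitting is exactly what the paper means by "the proof repeats word by word the proof of [MV], Thm.~4.4 \dots\ using Corollary \ref{hholl}": one applies the decomposition theorem, and Corollary \ref{hholl} is what makes it applicable---$q_!p^*\CF$, suitably $\om$-twisted, is a $G\times\BT\times\BT$-monodromic module on $\yy$, hence regular holonomic by that corollary (crucially, this is available \emph{before} one knows $\CF$ itself is RH, so there is no circularity, but one does need to invoke it here, not only at the end). Your proposal acknowledges "constructing a canonical splitting" as "the main obstacle" but does not identify the decomposition theorem plus Corollary \ref{hholl} as the mechanism, nor does it address the relative-position stratification of $Z$ and the (semi)smallness considerations that make the decomposition theorem actually deliver $p^*\CF$ on the nose rather than a shifted IC-extension of it. Until that splitting argument is supplied, the claim that "the other strata of $Z$ produce the remaining summands" is an unproven assertion about the structure of $\ch\HC$, not a consequence of what you have written.
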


The proof of the proposition repeats word by word the proof of Theorem~4.4
  of~\cite{MV} or ~\cite{G},~\S9, using
Corollary \ref{hholl}.

We can now formulate one of the main results of the paper.

\begin{theorem}\label{main}   For any 
regular dominant weight $\la\in\bt^*$ and  $c\neq -1,-2,\ldots,1-n$,
the functor $\HC$ restricts to an
exact functor $\,\HC:\ {\scr C}_{\Theta,c}(\X)\to
\Mon_{\bar\la,\bar\la^\dag,c}(\yy_{1,w_0})$, between 
{\em abelian}
categories.
\end{theorem}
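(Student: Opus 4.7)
The plan is to decompose the composite
\begin{equation*}
\HC(\CF)=\bigl(\om^{-1}\o q_!\, p^*(\CF)\bigr)\,*\,\CR^{\la,\la^\dag}_![\dim\CB]
\end{equation*}
into simpler pieces and verify that each is t-exact on the relevant subcategory. The projection $p\colon G\times\CB\times\P\to\X$ is smooth of relative dimension $\dim\CB$, so $p^*$ is t-exact as a functor of $\D$-modules, and tensoring by the line bundle $\om^{-1}$ is trivially exact. Thus the entire task reduces to the t-exactness of $q_!$ (appropriately shifted) and of the convolution with $\CR^{\la,\la^\dag}_!$.

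For the pushforward step, I would first show that $q_![\dim\CB]$ lands in a single perverse degree on the image of $p^*(\ccat)$. Although $q$ is not proper, the $G$-equivariance together with local $\FZ$-finiteness of character modules pin down the characteristic variety of the output: using the flatness of the moment map and finitely many orbits from Proposition~\ref{ein moment} and Corollary~\ref{zwei}, one bounds the characteristic cycle of $q_!p^*\CF$ from above by a single Lagrangian piece in $T^*\yy_{1,1}$, and then applies a comparison parallel to the argument of Lemma~\ref{SS} (combined with Lemma~\ref{bfunction}-style $b$-function control along the boundary divisor of the open $G\times\BT\times\BT$-orbit) to force concentration in a single degree. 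The output of this step lies in $\Mon_c(\yy_{1,1})$ with the appropriate monodromy type dictated by the $\FZ$-finiteness condition defining $\ccat_{\Th,c}$.

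The main obstacle, and the source of both numerical hypotheses of the theorem, is the t-exactness of the convolution $-*\CR^{\la,\la^\dag}_!\colon \Don_c(\yy_{1,1}) \to \Don_{\bar\la,\bar\la^\dag,c}(\yy_{1,w_0})$. I would tackle this by factoring $\CR^{\la,\la^\dag}_!$ along a reduced expression $w_0=s_{i_1}\cdots s_{i_\ell}$, reducing to a sequence of rank-one averagings, one for each simple reflection. For $\la$ regular dominant, each rank-one step is t-exact: this is a mirabolic counterpart of the classical Beilinson--Bernstein statement that parabolic induction by a dominant weight is t-exact, established in the non-mirabolic setting in~\cite{BG} via the equivalences \eqref{rconv}. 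The hypothesis $c\notin\{-1,-2,\ldots,1-n\}$ enters as the precise condition that no singular behavior along the $\P$-factor spoils this analysis: the excluded values of $c$ are exactly those for which the relevant twisted $\D_\P^c$-modules on $\P^{n-1}$ have non-vanishing higher cohomology along the projections ${\mathsf q}_{x,y}\boxtimes\Id_\P$ of~\S\ref{conv}, which would produce spurious shifts breaking t-exactness. Combining the individual exactness statements, one concludes that $\HC$ restricts to an exact functor $\ccat_{\Th,c}\to\Mon_{\bar\la,\bar\la^\dag,c}(\yy_{1,w_0})$ as claimed.
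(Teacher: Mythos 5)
Your strategy diverges fundamentally from the paper's and, as written, has a serious gap. The paper does \emph{not} show exactness by arguing that each geometric constituent of $\HC$ is $t$-exact. Instead, it invokes Theorem~\ref{hk}(ii), which identifies $\big[R\Gamma(\yy_{1,w_0},\HC(-))\big]^{(\la,\la^\dag)}$ with $[R\Gamma(\X,-)]^{(\th)}$. The right-hand side is manifestly exact on ${\scr C}_{\Th,c}(\X)$, since $\Gamma(\X,-)$ is exact by Beilinson--Bernstein (this is precisely where $c\neq -1,\ldots,1-n$ enters) and the $\th$-spectral projector is exact on locally $\FZ$-finite modules. Since $[\Gamma(\yy_{1,w_0},-)]^{(\la,\la^\dag)}$ is itself an \emph{abelian} equivalence (Beilinson--Bernstein again, using $\la,\la^\dag$ regular dominant), exactness of the composite forces exactness of $\HC$. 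No claim is made about the intermediate operations $q_!$ or $(-)*\CR^{\la,\la^\dag}_!$ being individually $t$-exact, and indeed one should not expect them to be.

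The gap in your argument is precisely this: you assert that $q_![\dim\CB]$ (on the image of $p^*$ from character modules) and the convolution $(-)*\CR^{\la,\la^\dag}_!$ are both $t$-exact, but neither claim is justified nor plausible in the stated generality. The functors \eqref{rconv} are \emph{triangulated} equivalences, not $t$-exact ones; factoring along a reduced word for $w_0$ gives no control on perverse degree, since individual intertwining/averaging steps shift cohomological degrees. Your appeal to characteristic-cycle bounds plus $b$-function control for $q_!$ is only a heuristic and does not give concentration in a single degree. More importantly, the theorem only claims exactness of $\HC$ on the very special subcategory ${\scr C}_{\Th,c}(\X)$; a step-by-step proof would have to explain why this restriction propagates through each stage, and your sketch does not engage with that. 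The paper sidesteps this entirely by transferring to the algebra side via Theorem~\ref{hk}(ii), where the constraint ``$\FZ$-locally finite'' and the $\th$-projection make the exactness transparent. Your proposed mechanism for the numerical hypothesis on $c$ (vanishing of higher cohomology along ${\mathsf q}_{x,y}\boxtimes\Id_\P$) is also not the one actually used; in the paper $c\neq -1,\ldots,1-n$ enters solely through Beilinson--Bernstein exactness on $\X=G\times\P^{n-1}$.
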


This theorem is a `mirabolic analogue' of a result
of Bezrukavnikov, Finkelberg, and Ostrik, ~\cite{BFO}.

Theorem \ref{main} will be deduced from a more precise result,
 Theorem \ref{hk} of the next subsection.

\subsection{}\label{Ysec} 
We will use simplified notation
 $\D(\P,c):=
\Ga(\P,\D^c_\P),$ resp. $\D(\X,c):=\Ga(\X,\ \D_\X^c),$ and
 $\D(\yy_{x,y},c):=\Ga(\yy_{x,y},\ \D^c_{x,y})$, for any $x,y\in W$.

An important role  below will be played by
 the algebra
\beq{buc}\BU_c:=(\Ug\o\Ug)\, \bo_{\FZ\o\FZ}\, (\Ut\o\Ut)\, \o\,
 \D(\P,c).
\eeq
The algebra \eqref{buc} fits into a
 diagram of  algebra maps
\beq{ucc}
\xymatrix{
&&\,\BU_c\,\ar@{->>}[dl]_<>(0.7){\eta=pr\o a\o \Id_{_{\,}}\;}
\ar@{->>}[dr]^<>(0.7){\ \kappa_{x,y}\o\Id}&&\,\Ut\o\Ut\,\ar[ll]_<>(0.5){\eps}\\
\D(\X,c)\ &\ (\Ug\o_\FZ\Ug)\o \D(\P, c)\ar@{_{(}->}[l]_<>(0.5){{}^\zeta}&&
\D(\yy_{x,y},c)^{\BT\times\BT}\ \ar@{^{(}->}[r]^<>(0.5){{}^j}&\ \D(\yy_{x,y},c).
}
\eeq

In this diagram, 
the map $j$ is the natural inclusion, the map $\zeta$
is the isomorphism \eqref{lrc}, and  the map $\kappa_{x,y}$  comes from 
the isomorphism $\kappa$ in   \eqref{lrc}.
Further, the map
$\eta$ is induced by the augmentation $a: \Ut\o\Ut\to\C$
and by the natural projection $pr:\ \Ug\o\Ug\onto\Ug\o_\FZ\Ug$;
the map $\eps$ is the imbedding
by $h\mto 1\o h\o1$.
Thus, the image of the map $\eps$ is contained in the center
of the algebra $\BU_c$.

Below, we use the notation introduced at
the beginning of \S\ref{Xsec}
 and observe that any $\BU_c$-module may be viewed 
as an
$\Ut\o\Ut$-module, via $\eps$.
\vskip 2pt

\npb{Given $\la\times\nu\in\bt^*\times\bt^*,$ let  ${\mathbf
M}\BU_{\la,\nu,c}$ be the full abelian  subcategory  of
$D^b\big(\Lmod{\BU_c}\big)$
whose objects
are $\BU_c$-modules $M$ such that
 one has
$M=M^{(I_{\la,\nu})}$,
resp. $\dul_{\la,\nu,c}$ be the  full triangulated subcategory
of  $D^b\big(\Lmod{\BU_c}\big)$ whose objects
are complexes $M$, of $\BU_c$-modules,  such that
 one has
$H^\hdot(M)=[H^\hdot(M)]^{(I_{\la,\nu})}$.}
\smallskip

For any object $M\in D^b(\Lmod{\BU_c})$ such that 
the $\Ut\o\Ut$-action on $H^\hdot(M)$ is locally
finite, there is a canonical
direct sum decomposition $M=\bigoplus_{\la\times\nu\in\bt^*\times\bt^*}
M^{(\la,\nu)}$ such that $M^{(\la,\nu)}\in\dul_{\la,\nu,c}$ for all $\la,\nu$.

Similarly, 
the composite $j\ccirc (\kappa_{x,y}\o\Id)\ccirc\eps$, in diagram \eqref{ucc},
 makes any $\D_{x,y}^c$-module
an $\Ut\o\Ut$-module.
Using this, one shows that for any 
cosets $\bar\la,\bar\nu\in\bt^*/\bt^*_\Z$ such that
$(\bar\la,\bar\nu)\in \bar\bt_{x,y}^\perp$ and any
monodromic complex $\CV\in
\Don_{\bar\la,\bar\nu,c}(\yy_{1,1}),$ there is a  canonical 
`derived' spectral decomposition: 
\beq{rg_spec}
 R\Ga(\yy_{x,y},\CV)=\underset{(\la,\nu)\,\operatorname{mod}(\bt^*_\Z\oplus\bt^*_\Z)=
(\bar\la,\bar\nu)}\bop\
 R\Ga(\yy_{x,y},\CV)^{(\la,\nu)}
\quad\text{where}\quad R\Ga(\yy_{x,y},\CV)^{(\la,\nu)}\,\in\, \dul_{\la,\nu,c}.
\eeq

Now, recall the maps $p,q$ from diagram \eqref{maps}.
The statement of part (i) of the following theorem
is a straightforward generalization 
of a result due to Hotta-Kashiwara, \cite{HK}, Theorem 1.

\begin{theorem}\label{hk} 
Let $\lambda\in\bt^*$ be a dominant regular  weight,
let $\bar\la$  be the image of
$\la$ under the projection $\bt^*\onto\bt^*/\bt^*_\Z$,
resp. $\th$ be the image of
$\la$ under the projection $\bt^*\onto\bt^*/W$, and $\Th$ be the image of
$\th$ under the projection $\bt^*/W\onto\bt^*/\waff.$
Then, for any  $c\neq -1,-2,\ldots,1-n$, we have
\vskip 2pt

\vi The following  diagram of functors commutes
$$
\xymatrix{
&&\Don_{\bar\la,-\bar\la,c}(\yy_{1,1})\ar[drr]^<>(0.5){\quad p_*q^![-\dim\CB]}
\ar[dll]_<>(0.5){\CV\mto R\Gamma(\yy,\CV)^{(\la,-\la-2\rho)}\quad}&&\\
\dul_{\la,-\la-2\rho,c}
\ar[rrrr]^<>(0.5){M\mto \dxc\stackrel{L}{\bigotimes}_{\BU_c}\, M}
&&&& \dcatchi.
}
$$
\vskip 1pt

\vii There is an isomorphism of functors 
$\dcatchi\to  D^b(\Lmod{\BU_c})$:
$$\big[R\Gamma(\yy_{1,w_0},\
\HC(-))\big]^{(\la,\la^\dag)}\
\cong \
[R\Gamma(\X,-)]^{(\th)}.$$
\end{theorem}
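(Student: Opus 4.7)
The plan is to prove (i) as a monodromic and mirabolic generalization of the Hotta--Kashiwara/Borho--Brylinski localization argument, and to deduce (ii) from (i) via the convolution equivalences of \S\ref{R} and a base change along diagram \eqref{maps}.

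For part (i) I would proceed in three steps. First, exhibit $\dxc$ as a $(\dxc,\BU_c)$-bimodule by identifying it with $p_*q^!\D^c_{1,1}[-\dim\CB]$: the right $\BU_c$-action on $\D^c_{1,1}$ comes from the Borho--Brylinski isomorphism \eqref{bobr2} combined with the map $\eta$ of \eqref{ucc}, and survives the functor $p_*q^!$ since $\BU_c$ acts along directions transverse to the fibers of $p$ and $q$. Second, invoke Beilinson--Bernstein localization in Borho--Brylinski form for the base affine space at dominant regular $\la$, together with Beilinson--Bernstein for $\P^{n-1}$ (valid precisely under the hypothesis $c\neq-1,\ldots,1-n$): these combine to yield an equivalence from $\Don_{\bar\la,-\bar\la,c}(\yy_{1,1})$ onto an appropriate subcategory of $\Lmod{\BU_c}$ via $\CV\mapsto R\Gamma(\yy_{1,1},\CV)^{(\la,-\la-2\rho)}$, with quasi-inverse $M\mapsto \D^c_{1,1}\overset{L}{\o}_{\BU_c}M$. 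Third, apply $p_*q^!$ and commute it with the tensor product over $\BU_c$ to conclude
\begin{equation*}
p_*q^!\CV[-\dim\CB]\ \cong\ \dxc\overset{L}{\o}_{\BU_c}R\Gamma(\yy_{1,1},\CV)^{(\la,-\la-2\rho)},
\end{equation*}
which is the commutativity asserted in the diagram of (i).

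For part (ii), unravel the definition $\HC(\CF)=(\om^{-1}\o q_!p^*\CF)*\CR^{\la,\la^\dag}_![\dim\CB]$. The object $\CN:=\om^{-1}\o q_!p^*\CF$ lies in $\Don_{\bar\la,-\bar\la,c}(\yy_{1,1})$, and convolution with $\CR^{\la,\la^\dag}_!$ is a triangulated equivalence onto $\Don_{\bar\la,\bar\la^\dag,c}(\yy_{1,w_0})$ by \eqref{rconv}. Applying $R\Gamma$ and projecting to the $(\la,\la^\dag)$-spectral component eliminates the convolution factor $\CR^{\la,\la^\dag}_!$, up to the pro-unipotent completion by $\CE^\infty$ which acts as identity on any fixed generalized eigenspace; this produces
\begin{equation*}
[R\Gamma(\yy_{1,w_0},\HC(\CF))]^{(\la,\la^\dag)}\ \cong\ [R\Gamma(\yy_{1,1},\CN)]^{(\la,-\la-2\rho)}[\dim\CB].
\end{equation*}
Base change along diagram \eqref{maps} converts the right-hand side to an expression involving $R\Gamma(G\times\CB\times\P,p^*\CF)$ twisted by $\om^{-1}$, and Borel--Weil--Bott combined with the Harish-Chandra homomorphism $\hcc$ identifies the $(\la,-\la-2\rho)$-spectral component with the $\th$-spectral summand of $R\Gamma(\X,\CF)$, where $\th=\la\ \mathrm{mod}\ W$.

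The main obstacle is step two of part (i): verifying that under the Borho--Brylinski equivalence the monodromy label $(\bar\la,-\bar\la)$ on the sheaf side matches precisely the spectral label $(\la,-\la-2\rho)$ on the module side. The shift by $-2\rho$ in the second coordinate reflects the $\om_y$-twist built into the definition \eqref{ddy} of $\dd{x}{y}$, and tracking it requires careful computation of the $\BT\times\BT$-action on sections of $\D^c_{1,1}$. A secondary difficulty, entering in part (ii), is to verify that the composite $\zeta\circ\upsilon$ of \S\ref{Xsec} correctly translates the $(\la,-\la-2\rho)$-spectral projection on $\BU_c$-modules into the $\th$-spectral projection on $\dcatchi$ used in the very definition of that category.
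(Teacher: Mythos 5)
Your overall skeleton is recognizable, but there are two significant mismatches with the paper's argument, and the second is a genuine gap.

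First, on the role of the convolution $*\CR^{\la,\la^\dag}_!$: you assert that ``applying $R\Gamma$ and projecting to the $(\la,\la^\dag)$-spectral component eliminates the convolution factor, up to the pro-unipotent completion by $\CE^\infty$ which acts as identity on any fixed generalized eigenspace.'' This is not a triviality about pro-unipotence. It is precisely the quasi-isomorphism \eqref{ikey},
$$[\RR\Gamma(\yy_{x,y}, \varpi\o\CV)]^{(\nu,-\la-2\rho)}\ \iso\
\big[\RR\Gamma(\yy_{x,w_0y},\ \CV\, *\,\CR_!^{\la,\la^\dag})\big]^{(\nu,\la^\dag)},$$
which the paper explicitly attributes to Beilinson--Bernstein's generalization of Casselman's submodule theorem (\cite{BB}, Theorem~12). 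Treating it as a one-line observation understates what is actually being used, and more importantly hides the $\varpi$-twist that makes the $(-\la-2\rho)$ label come out right. If you want credit for this step you should cite the intertwining-functor/Casselman input rather than the pro-unipotence of $\CE^\infty$.

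Second, and this is the real gap: your route from there to $[R\Gamma(\X,\CF)]^{(\th)}$ is a vague appeal to ``base change along diagram \eqref{maps}'' plus Borel--Weil--Bott. That is not how the paper proceeds, and it is not clear it can be carried out as stated: the map $q:G\times(\tB/\BT)\times\P\to\yy_{1,1}$ is not a fibration to which a clean base-change/BWB computation immediately applies, and you never specify what you would push or pull where. The paper instead proves the auxiliary isomorphism
$$\big[R\Gamma(\yy_{1,1},\ q_!p^*(-)[\dim\CB])\big]^{(\la,-\la-2\rho)}\ \cong\ [R\Gamma(\X,-)]^{(\th)}$$
by a Yoneda argument: for $M\in\dul_{\la,-\la-2\rho,c}$ with localization $\CM=\D^c_{1,1}\overset{L}{\o}_{\BU_c}M$, one chains $\RR\Hom_{\BU_c}(M,-)$ through the Beilinson--Bernstein equivalence \eqref{BB}, the adjunction $(q_!p^*,\ p_*q^!)$, and crucially \emph{part (i) of the theorem itself} to identify $p_*q^!(\CM)[-\dim\CB]$ with $\dxc\overset{L}{\o}_{\BU_c}M$. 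Your proposal for part (ii) makes no use of part (i) at all, which is a strong signal that something is missing; in the paper's logic, part (i) is exactly what makes the adjunction computation close up. Unless you can supply the details of the asserted base-change and explain how the $(\la,-\la-2\rho)$-component turns into the $\th$-component over $\FZ$ (via $\zeta\circ\upsilon$) without invoking part (i), this step does not constitute a proof.

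For part (i) itself, both you and the paper defer to Hotta--Kashiwara; the paper simply states that the proof of \cite{HK}, Theorem~1 carries over, while you sketch a bimodule reformulation of it. That part is fine, though the claim that $\BU_c$ ``acts along directions transverse to the fibers of $p$ and $q$'' would need to be made precise to actually establish the $(\dxc,\BU_c)$-bimodule structure on $p_*q^!\D^c_{1,1}[-\dim\CB]$.
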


\subsection{Proof of Theorem \ref{main} and  Theorem \ref{hk}}
We are going to
use one general property of the convolution functor
$(-)* \CR_!^{\la,\la^\dag}$,
see \eqref{rconv}, that  can be deduced from
\cite{BB}, Theorem~12.

Let $x,y\in W$ and $\nu,\la\in\bt^*$.
The property says that, 
  in the bounded derived category
of $\Ug\o\Ug\o\Ug$-modules,
one has a  canonical  quasi-isomorphism
\beq{ikey}
[\RR\Gamma(\yy_{x,y}, \varpi\o\CV)]^{(\nu,-\la-2\rho)}\iso
\big[\RR\Gamma(\yy_{x,w_0y},\ \CV\, *\,
\CR_!^{\la,\la^\dag})\big]^{(\nu,\la^\dag)},
\quad\forall \CV\in\Don_{\bar\nu,-\bar\la,c}(\yy_{x,y}).
\eeq

\begin{proof}[Proof of Theorem \ref{hk}]
The proof of part (i) 
 is identical
to the proof of \cite{HK}, Theorem 1, and will be omitted.
We now  prove part (ii) of Theorem \ref{hk}. It will be convenient
to
introduce the following notation:
$\hhc(-):=q_!p^*(-)[\dim\CB]$, resp. $\chh(-):=p_*q^!(-)[-\dim\CB].$
These functors form an adjoint pair.

We prove first  an  auxiliary result saying that:

{\em There is an isomorphism between the following two functors}
$\Don_{\bar\la,-\bar\la,c}(\yy_{1,1})\to D^b(\Lmod{\BU_c})$:
\beq{hhcc}\big[R\Gamma(\yy_{1,1},\
\hhc(-))\big]^{(\la,-\la-2\rho)}\
\cong \
[R\Gamma(\X,-)]^{(\th)}.\eeq

To prove this, observe that
our assumptions on $\la$ and $c$ insure that one has mutually
quasi-inverse 
Beilinson-Bernstein   triangulated equivalences
\beq{BB}
\xymatrix{
\dul_{\la,-\la-2\rho,c}{\hphantom{x}}
\ar@<0.5ex>[rrr]^<>(0.5){\,\D^c_{1,1}{\stackrel{L}\o}_{\BU_c}\ (-)\,}&&&
{\hphantom{x}}
\Don_{\bar\la,-\bar\la,c}(\yy_{1,1})\,.\ar@<0.5ex>[lll]^<>(0.5){\,[\RR\Gamma(\yy_{1,1},-)]^{(\la,-\la-2\rho)}\,}
}
\eeq

Let
$\CF\in \dcatchi$ and $ M\in \dul_{\la,-\la-2\rho,c}$.
Put  $F:=\RR\Gamma(\X,\CF)$ and $\CM:=\D^c_{1,1}{\stackrel{L}\o}_{\BU_c} M\
\in \Don_{\bar\la,-\bar\la,c}(\yy_{1,1}).$ 
We compute

 \allowdisplaybreaks\begin{align*}
\RR\Hom_{\BU_c}\big(M,\ [\RR\Gamma(\yy_{1,1}, \hhc(\CF))]^{(\la,-\la-2\rho)}\big)&=
\RR\Hom_{\D^c_{1,1}}(\CM,\ \hhc(\CF))&\text{by \eqref{BB}}\\
&=\RR\Hom_{\dxc}(\chh(\CM),\ \CF)&\text{by adjunction}\\
&=\RR\Hom_{\dxc}(\dxc
{\stackrel{L}\o}_{\BU_c} M^{(\la,-\la-2\rho)},\ \CF)&\text{by part (i)}\\
&=\RR\Hom_{\BU_c}(M^{(\la,-\la-2\rho)},\ \CF)\\
&=\RR\Hom_{\BU_c}(M^{(\la,-\la-2\rho)},\ F)\\
&=
\RR\Hom_{\BU_c}(M,\ F)=
\RR\Hom_{\BU_c}(M,\ F^{(\th)}),
\end{align*}
where the last two equalities hold since 
$M=M^{(\la,-\la-2\rho)}$.

Thus, we have  established, for any $\CF\in \dcatchi,\ M\in 
\dul_{\la,-\la-2\rho,c},$ a functorial isomorphism:
$$
\RR\Hom_{\BU_c}\big(M,\ \RR\Gamma(\yy_{1,1},
 \hhc(\CF))^{(\la)}\big)=\RR\Hom_{\BU_c}\big(M,\
\RR\Gamma(\X,\CF)^{(\th)}\big).
$$

Such an isomorphism clearly yields an isomorphism of functors 
claimed in \eqref{hhcc}.

To complete the poof of part (ii) of Theorem \ref{hk}(ii), 
 we combine \eqref{hhcc} with the  quasi-isomorphism
\eqref{ikey} for $\CV:=\hhc(\CF)$. This way, using the definition
of the functor $\hc$, we obtain the 
isomorphism of functors stated in part (ii) of the theorem.
\end{proof}

\begin{proof}[Proof of Theorem \ref{main}] We have that
 both $\la$ and $\la^\dag$ are dominant
regular weights and $c\neq -1,-2,\ldots,1-n$.
Therefore, according to the Beilinson-Bernstein theorem,
each of the two functors $\Gamma(\yy_{1,w_0},-)^{(\la,\la^\dag)}:\
\Mon_{\bar\la,\bar{\la}^\dag,c}\to{\mathbf M}\BU_{\la,\la^\dag,c}$
and $\Gamma(\X,-):\ \Lmod{\dxc}\to\Lmod{\D(\X,c)}$
is exact and yields an equivalence of
{\em abelian} categories.
Also, the functor $F\mto F^{(\th)}$ is clearly exact
on the category of $\mathfrak Z$-locally finite modules.

Thus, the isomorphism of functors in Theorem \ref{hk}(ii)
implies that 
$$\big[\Gamma(\yy_{1,w_0},\
\HC(-))\big]^{(\la,\la^\dag)}:\ {\scr C}_{\Th,c}(\X)\to \dul_{\la,\la^\dag,c}$$
is an exact functor. Furthermore, this functor
is a composite of the functor $\HC$ and the
functor $[\Gamma(\yy_{1,w_0},
-)]^{(\la,\la^\dag)}$, which is an equivalence
of the corresponding  abelian categories.

It follows that $\HC$ must itself induce an exact functor
between  abelian categories.
\end{proof}

\section{Mirabolic Harish-Chandra $\D$-module}
\subsection{}
 There is  an especially
important family of  mirabolic character $\D$-modules that was introduced in
 \cite[\S ~7.4]{GG}. To define these $\D$-modules,
recall the map \eqref{lrc} and the subsets
$\FZ_\th^c=\upsilon\zeta(\FZ_\th)\sset \D(\X,c)$, see \S\ref{Xsec}.

\begin{definition}\label{hcmod}
 For any  $(\th, c)\in (\bt^*/W)\times\C,$
we  define a $\dxc$-module called {\em Harish-Chandra $\dxc$-module},
resp. a projective limit of $\dxc$-modules
called {\em generalized}  Harish-Chandra $\D$-module, as follows
$$
\CG^{\th,c}:=\dxc/\big(\dxc\ \g+
\dxc\ \FZ^c_\th\big),
\quad\oper{resp.}\quad\wh\CG^{\th,c}:=\underset{^{m\to\infty}}{\oper{lim\,proj}}\ \dxc/\big(\dxc\ \g+
\dxc\ (\FZ^c_\th)^m\big).
$$
\end{definition}

It is clear that we have $\CG^{\th,c}\in{\scr C}_{\Th,c}$,
where $\Th\in \bt^*/\waff$ is the image of $\th$
under the projection $\bt^*/W\onto\bt^*/\waff$.
Therefore, Proposition \ref{hc_prop} implies that
$\CG^{\th,c}$ is a regular holonomic $\D$-module.

Our goal is to provide a geometric construction of
the  Harish-Chandra $\D$-module $
\CG^{\th,c}$ similar to one
given in \cite{HK1}  in the
classical case.

To this end, let $U$ be
the  unique
Zariski open and dense $G\times \BT\times\BT$-orbit
in $\yy_{1,w_0}$. 
One may mimic definitions of the local systems
${\mathcal J}^\la$ and $\wh{\mathcal J}^\la,$
on ${\mathcal U}$, see \S\ref{R},
and introduce analogous local systems,
more precisely, monodromic $\D^c_U$-modules $\lll^{\la,c}$ and
$\wh{\lll}^{\la,c}$.

Write $j: U\into\yy_{1,w_0}$
for the open imbedding. 

\begin{theorem}\label{kthm}
  Let $\lambda\in\bt^*$ be a
 sufficiently
dominant regular weight, and
let  $\th$ be the image of
$\la$ under the projection $\bt^*\onto\bt^*/W$. Then,  for 
 sufficiently large real  $c\gg 0$,
in $\dcat$ there is an 
isomorphism
\beq{hc_module}
\CG^{\th,c}\
\cong\ \ch(j_!{\lll}^{\la,c}),\quad\oper{resp.}\quad
\wh\CG^{\th,c}\
\cong\ \ch(j_!{\wh{\lll}}^{\la,c}).
\eeq
\end{theorem}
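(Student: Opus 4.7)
The plan is to realize $j_!\lll^{\la,c}$ as a cyclic quotient of $\D_{1,w_0}^c$ by means of Theorem \ref{bthm}, and then to compute the effect of $\ch$ on that quotient by combining Theorem \ref{hk}(i) with the exchange isomorphism \eqref{ikey} and Beilinson--Bernstein localization.

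Fix the character $\psi_\la$ of $\g\oplus\bt\oplus\bt$ determined by $(0,\la,\la^\dag)$ and set
$$
\CM \,:=\, \D_{1,w_0}^c \big/ \D_{1,w_0}^c \cdot \g^{\psi_\la}.
$$
Unwinding the definitions of \S\ref{psi} and \S\ref{R}, the restriction $j^*\CM$ is a rank one $G$-equivariant local system on $U$ of monodromy $(\la,\la^\dag)$, hence isomorphic to $\lll^{\la,c}$. To identify $\CM$ itself with $j_!\lll^{\la,c}$, descend $\CM$ along the $\BT$-torsor $\yy_{1,w_0}\to\CB\times\CB\times\P$ to a twisted $\D$-module for the diagonal $G$-action on the triple flag variety, which has dimension $n^2-1=\dim G$. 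Proposition \ref{ein moment}(i)--(ii) together with Lemma \ref{muflat}(iii) verify hypotheses \eqref{item1}--\eqref{item3} of Theorem \ref{bthm}, and for $\la$ sufficiently dominant the semi-invariance character $\chi_{\LL,x}$ of the $G$-linearized defining line bundle differs from $\phi|_{\g_x}$ at every non-open orbit, so that \eqref{match} holds as well. The second isomorphism in \eqref{concl_thm} then yields $\CM \cong j_!\lll^{\la,c}$ once $c$ has been pushed sufficiently far in the positive direction.

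To compute $\ch(\CM)$, apply the exchange formula \eqref{ikey} with $x=y=1$ and $\CV = \CM * \CR^{-\la^\dag,-\la}_*$; since $*\CR^{\la,\la^\dag}_!$ and $*\CR^{-\la^\dag,-\la}_*$ are mutually inverse by \eqref{rconv}, this yields
$$
\bigl[R\Gamma\bigl(\yy_{1,1},\ \varpi\o(\CM * \CR^{-\la^\dag,-\la}_*)\bigr)\bigr]^{(\la,-\la-2\rho)}
\;\iso\;
\bigl[R\Gamma(\yy_{1,w_0},\,\CM)\bigr]^{(\la,\la^\dag)}.
$$
Theorem \ref{hk}(i) identifies $\ch(\CM) = p_*q^!\bigl(\varpi\o (\CM * \CR^{-\la^\dag,-\la}_*)\bigr)[-\dim\CB]$ with $\dxc\,\bo_{\BU_c}\,[R\Gamma(\yy_{1,w_0},\CM)]^{(\la,\la^\dag)}$. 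For $\la$ dominant regular and $c\gg 0$, Beilinson--Bernstein vanishing places $R\Gamma(\yy_{1,w_0},\CM)$ in degree zero, and the Borho--Brylinski presentation \eqref{bobr2} identifies $\Gamma(\yy_{1,w_0},\CM)$ with an explicit quotient of $\BU_c$. Passing to the $(\la,\la^\dag)$-generalized eigenspace for the right $\bt\oplus\bt$-action converts the monodromy eigencondition, through the Harish-Chandra homomorphism $\hcc$ built into the definition of $\BU_c$, into the central-character condition cutting out $\FZ_\th^c$, giving
$$
\ch(\CM)\;\cong\;\dxc\,\bo_{\BU_c}\bigl(\BU_c\big/(\BU_c\,\g + \BU_c\,\FZ_\th^c)\bigr)
\;=\;\dxc\big/(\dxc\,\g + \dxc\,\FZ_\th^c)\;=\;\CG^{\th,c}.
$$
The generalized version is obtained by replacing $\lll^{\la,c}$ by its pro-unipotent thickening $\wh\lll^{\la,c}$ and $\FZ_\th$ by its powers $\FZ_\th^m$, then taking the projective limit in $m$.

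The main obstacle is the quantitative content of the second step: one must simultaneously bound how dominant $\la$ must be so that condition \eqref{match} holds at each of the finitely many non-open $G$-orbits in $\CB\times\CB\times\P$, and how positive $c$ must be so that the $b$-function argument of Lemma \ref{bfunction} underlying Theorem \ref{bthm}(ii) produces the surjectivity required to conclude $\CM \cong j_!\lll^{\la,c}$ rather than some proper subquotient. These estimates, combined with the Beilinson--Bernstein vanishing thresholds invoked in the last step and the explicit orbit-stabilizer analysis available through Proposition \ref{ein moment}(i), justify the implicit ``sufficiently dominant'' and ``sufficiently large'' qualifiers in the statement.
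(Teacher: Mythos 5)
Your proposal follows the same high-level strategy as the paper (use Theorem \ref{bthm} to identify $\CJ^{\la,\la^\dag,c}_{1,w_0}$ with $j_!\lll^{\la,c}$, then compute $\ch$ by combining the exchange isomorphism \eqref{ikey}, Theorem \ref{hk}(i), Lemma \ref{RGlem}, and Beilinson--Bernstein localization); the paper packages this as the chain Proposition \ref{kwa} $\Rightarrow$ Corollary \ref{cjj} $\Rightarrow$ Lemma \ref{CJJ}, while you fuse the steps. Two details differ from the paper in a harmless but worth-noting way. First, you propose descending along the torsor $\yy_{1,w_0}\to\CB\times\CB\times\P$ before invoking Theorem \ref{bthm}; the paper applies Theorem \ref{bthm} directly on $\yy_{1,w_0}$ with the group $G\times\BT\times\BT$, whose effective dimension $(n^2-1)+(n-1)$ matches $\dim\yy_{1,w_0}$ (Corollary \ref{zwei}), and with the explicit semi-invariant section $s$ and character $\phi=(0,2\rho,2\rho)$ of Lemma \ref{s}. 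Your descent would force $\phi=0$ (since $SL_n$ has no characters), which changes the way the $k$-shift in \eqref{concl_thm} interacts with the monodromy parameters; it is probably fixable, but you do not give the dictionary between the twist on $\CB\times\CB\times\P$ and the triple $(\la,\la^\dag,c)$. Second, you compute global sections on $\yy_{1,w_0}$ after transporting by \eqref{ikey}, whereas the paper, in Proposition \ref{kwa}, carries out the analogous Borho--Brylinski computation directly on $\yy_{1,1}$, where the matching with $\BU_c$ via \eqref{bobr2} and the coproduct $\Delta$ is worked out explicitly.

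There are two genuine gaps. (1) You write that condition \eqref{match} holds ``for $\la$ sufficiently dominant''; this misattributes the role of $\la$. The inequality $\chi_{\lll,x}\neq\phi|_{\g_x}$ depends only on the fixed $G\times\BT\times\BT$-equivariant line bundle $\lll$ and the fixed $\phi$, not on $\la$; it is proved unconditionally in Lemma \ref{tr} via Travkin's one-parameter subgroup argument. The ``sufficiently dominant $\la$ and $c\gg0$'' hypotheses in the theorem arise solely from the $b$-function bound (Lemma \ref{bfunction}) through the parameter $k\gg0$ in Theorem \ref{bthm}(ii). (2) In your final identification $\dxc\bo_{\BU_c}\bigl(\BU_c/(\BU_c\g+\BU_c\FZ_\th^c)\bigr)\cong\CG^{\th,c}$ you silently replace the derived tensor product $\dxc\overset{L}{\o}_{\BU_c}(-)$ from Theorem \ref{hk}(i) by the underived one. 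This Tor-vanishing is exactly what Proposition \ref{kwa} establishes: the map ${}^L\CG^{\th,c}\to\CG^{\th,c}$ is shown to be a quasi-isomorphism using the Koszul resolution \eqref{Kosz} together with flatness of the moment map (Claim \ref{cl}), via Lemma \ref{holl} and Lemma \ref{RGlem}. Without this, the final step of your argument is incomplete. The remaining bookkeeping matching the monodromy ideal $\bt^{\la,\la^\dag}\sset\Ut\o\Ut$ with the central ideal $\FZ_\th$ through the Harish--Chandra homomorphism is plausible but, as written, is a one-line assertion where the paper spends a full subsection.
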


The theorem
 provides
a purely geometric construction of the
perverse sheaf that corresponds to  the Harish-Chandra
$\D$-module $\CG^{\th,c}$
via the Riemann-Hilbert correspondence.

The proof of Theorem \ref{kthm} will occupy sections
\ref{duality_sec}-\ref{hthpf}.

\begin{corollary}\label{proproj} 
With the assumptions of Theorem \ref{main}, we have that
$\wh\CG^{\th,c}$  is a projective (pro)-object of the category ${\scr
C}_c(\X).$
\end{corollary}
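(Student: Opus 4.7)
The plan is to combine the geometric description of $\wh\CG^{\th,c}$ provided by Theorem \ref{kthm} with the adjunction $(\ch,\hc)$ of Proposition \ref{hc_prop}(i) and the exactness of $\hc$ established in Theorem \ref{main}. By the block decomposition \eqref{ccharacter}, to check projectivity of $\wh\CG^{\th,c}$ in $\ccat$ it suffices to check projectivity inside the single block $\scr C_{\Th,c}(\X)$ containing it, where $\Th$ is the image of $\th$ under $\bt^*/W\onto\bt^*/\waff$. So my goal is to show that the functor $\CF\mto\Hom_{\scr C_{\Th,c}(\X)}(\wh\CG^{\th,c},\CF)$ is exact on this abelian category.

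First, applying Theorem \ref{kthm} yields a (pro-)isomorphism $\wh\CG^{\th,c}\cong\ch(j_!\wh{\lll}^{\la,c})$, valid for sufficiently dominant regular $\la$ mapping to $\th$ and sufficiently large real $c$. Passing to the appropriate limit of the adjunction from Proposition \ref{hc_prop}(i), for any $\CF\in\scr C_{\Th,c}(\X)$ we obtain a natural identification
\[
\Hom(\wh\CG^{\th,c},\CF)\ \cong\ \Hom\big(j_!\wh{\lll}^{\la,c},\ \HC(\CF)\big),
\]
where the right-hand Hom is taken in the (pro-)abelian category of monodromic $\D^c_{1,w_0}$-modules with monodromy $(\bar\la,\bar\la^\dag)$. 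Next, I would invoke Theorem \ref{main}, which says precisely that under the stated hypotheses on $\la$ and $c$ the functor $\HC\colon \scr C_{\Th,c}(\X)\to \Mon_{\bar\la,\bar\la^\dag,c}(\yy_{1,w_0})$ is \emph{exact} between abelian categories. Thus the plan reduces to proving the following abelian-category claim: $j_!\wh{\lll}^{\la,c}$ is a projective pro-object of $\Mon_{\bar\la,\bar\la^\dag,c}(\yy_{1,w_0})$.

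For this projectivity, I would argue as follows. Since $j$ is an open imbedding, $j_!$ is left adjoint to $j^*$, so for any $\CM\in\Mon_{\bar\la,\bar\la^\dag,c}(\yy_{1,w_0})$ one has $\Hom(j_!\wh{\lll}^{\la,c},\CM)=\Hom_U(\wh{\lll}^{\la,c},j^*\CM)$, and $j^*$ is exact. Every object of $\Mon_{\bar\la,\bar\la^\dag,c}$ restricts on $U$ to a (twisted) $G$-equivariant local system with prescribed semisimplification of the $\BT\times\BT$-monodromy, and via the quotient $h\colon\scr U\to G\backslash\scr U$ in \eqref{jf} this reduces to the category of pro-unipotent tensor deformations of $\mathcal J^\la$ on the $\BT$-torsor $G\backslash\scr U$. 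The pro-object $\wh{\lll}^{\la,c}$ is built, by the same recipe as in \cite{BG} and \S\ref{R}, from the universal pro-unipotent local system $\CE^\infty$ twisted by $\mathcal J^\la$; this universal object corepresents the exact ``fiber at a base point'' functor on pro-unipotent local systems. Hence $\Hom_U(\wh{\lll}^{\la,c},j^*\CM)$ is given by taking the fiber of $j^*\CM$ at a fixed base point of $\scr U$ (with the induced action of the pro-unipotent completion of the fundamental group), which is an exact functor. Composition of the three exact functors $\CF\mto\HC(\CF)\mto j^*\HC(\CF)\mto\text{fiber}$ therefore proves the claim, and consequently $\wh\CG^{\th,c}$ is projective in $\scr C_{\Th,c}(\X)\sseq\ccat$.

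The main obstacle, in my view, is not the formal adjunction manipulations but the pro-object bookkeeping in the last paragraph: one has to make sure that Theorem \ref{kthm}, the adjunction of Proposition \ref{hc_prop}, and the projectivity of $\wh{\lll}^{\la,c}$ are all interpreted consistently as statements about pro-objects (i.e. as genuine inverse limits of the finite-step constructions $\wh\CG^{\th,c}_m:=\dxc/(\dxc\,\g+\dxc\,(\FZ^c_\th)^m)$ on the one side, and finite pro-unipotent quotients of $\CE^\infty$ on the other), and that ``projectivity'' is understood in the appropriate pro-sense. Once this identification is in place, the argument is essentially formal.
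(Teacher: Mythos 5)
Your proposal is correct and follows essentially the same route as the paper: both invoke Theorem \ref{kthm} to identify $\wh\CG^{\th,c}$ with $\ch(j_!\wh{\lll}^{\la,c})$, both use the $(\ch,\hc)$ adjunction together with the exactness of $\hc$ from Theorem \ref{main} to reduce the problem to projectivity of $j_!\wh{\lll}^{\la,c}$ in $\Mon_c(\yy_{1,w_0})$, and both then observe that $\wh{\lll}^{\la,c}$ is a projective pro-object in $G$-equivariant local systems on $U$ so that $j_!$ (left adjoint to the exact $j^*$) carries it to a projective. Your version is somewhat more explicit than the paper's about the block decomposition \eqref{ccharacter}, the corepresentability of the fiber functor, and the pro-object bookkeeping, but these are elaborations of the same argument rather than a different one.
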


\begin{proof}[Proof of Corollary] In general, let
${P}$ be a projective (pro)-object of the
abelian category ${{\mathsf{Mon}}_c(\yy_{1,w_0})}$ such that the complex
$\ch({P})$ is concentrated in degree zero.

We claim that  $\Hom_{\dxc}(\ch({P}),\ -)$
is an exact
functor on the category ${\scr C}_c(\X)$. To check this,
we use  adjunction and obtain
$$\Hom_{\dxc}(\ch({P}),\ \CM)=\Hom_{\D^c_\yy}({P},\ \hc(\CM)),
\qquad\forall\CM\in  {\scr C}_c(\X).
$$

Here,  the functor $\CM\mto \hc(\CM)$ is exact by Theorem
\ref{main}, and the functor
$\Hom_{\D^c_\yy}({P},-)$ is exact since ${P}$ is projective.
Thus, we have proved our claim. We conclude that
$\ch({P})$ is a projective (pro)-object of the category ${\scr
C}_c(\X).$

To complete the proof of the corollary,
 we observe that ${\wh{\lll}}^{\lambda,c}$ is a projective (pro)-object
in the category of $G$-equivariant local systems on  $U$, the
open $G\times\BT\times\BT$-orbit.
Therefore, $j_!{\wh{\lll}}^{\lambda,c}$ 
is a projective (pro)-object
of the category ${{\mathsf{Mon}}_c(\yy_{1,w_0})}$. The result  follows.
\end{proof}

\subsection{}
\label{duality_sec} We begin with a general setting.

Let $X$ be a smooth variety and
let $\D$ be a TDO on $X$. Let ${\mathfrak G}$ be an  $m$-dimensional
filtered Lie algebra equipped with a filtration preserving Lie algebra morphism
${\mathfrak G}\to \D,\,u\mto\arr{u},$ into
 (not necessarily first order) twisted
differential operators. 
Let $\D{\mathfrak G}$
be the left ideal in $\D$ generated by the image of ${\mathfrak G}$.

We recall the following version
of a result due to G. Schwarz \cite{Sc}, \S8, and  M. Holland, \cite{H},
Proposition 2.4,
independently.

\begin{lemma}\label{holl} 
Assume that 
the image of $\gr{\mathfrak G}\to\gr\D,$   the  associated graded
morphism,
 gives
a regular sequence  in $\gr\D$.
Then, the natural map
\begin{equation}\label{holl_iso}\gr\D/\gr\D\gr{\mathfrak G}\onto \gr(\D/\D{\mathfrak G}),
\end{equation}
is a bijection. Moreover, the standard Chevalley-Eilenberg complex
associated with  the action of the Lie algebra ${\mathfrak G}$
 on $\D$ by right multiplication,
\begin{equation}
\label{Koszul}
0\to\D\o\LLa^m{\mathfrak G}\to\D\o\LLa^{m-1}{\mathfrak G}\to\ldots\to
\D\o\LLa^2{\mathfrak G}\to\D\o{\mathfrak G}\to\D\to0,
\end{equation}
is a free $\D$-module resolution of $\D/\D{\mathfrak G}$, a left $\D$-module.
\end{lemma}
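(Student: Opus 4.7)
My strategy is to put a natural filtration on the Chevalley--Eilenberg complex~\eqref{Koszul} and reduce to a standard Koszul resolution over $\gr\D$, where the regular-sequence hypothesis applies directly.

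First, I would filter the complex $C_\bullet := \D\otimes\LLa^\bullet{\mathfrak G}$ by setting $F_pC_k := \sum_{i+j=p} F_i\D\otimes F_j\LLa^k{\mathfrak G}$, where $F_\bullet\LLa^k{\mathfrak G}$ is induced from the filtration on ${\mathfrak G}$. The differential of \eqref{Koszul} splits as $d = d_K + d_L$, where $d_K$ is right multiplication by one of the factors and $d_L$ is the Lie bracket correction. Since $\gr\D$ is commutative and $uv - vu = [u,v]$ drops one filtration step in $\D$, the bracket part $d_L$ strictly decreases filtration degree while $d_K$ preserves it. Consequently $\gr d = \gr d_K$, and the associated graded complex $(\gr C_\bullet,\gr d)$ is exactly the classical Koszul complex on $\gr\D$ built from right multiplication by a set of generators of $\gr{\mathfrak G}$.

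By the regular-sequence hypothesis this graded Koszul complex is acyclic in positive degree, with $H^0 = \gr\D/\gr\D\cdot\gr{\mathfrak G}$. A standard spectral-sequence argument --- the filtration $F_\bullet C_\bullet$ being exhaustive and bounded below on each $C_k$ --- then transfers both conclusions to $C_\bullet$ itself: positive-degree acyclicity, and the identification $\gr H^0(C_\bullet)\cong\gr\D/\gr\D\cdot\gr{\mathfrak G}$. Since $H^0(C_\bullet) = \D/\D{\mathfrak G}$ by direct inspection of the last two terms of \eqref{Koszul}, we obtain the claimed free $\D$-module resolution, and the identification of associated gradeds is precisely the bijection~\eqref{holl_iso} (surjectivity of that map is tautological; injectivity comes from the spectral sequence).

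The main obstacle will be the filtration bookkeeping: verifying that $d_L$ strictly lowers filtration, that the induced filtration on $\LLa^\bullet{\mathfrak G}$ interacts correctly with $F_\bullet\D$ when ${\mathfrak G}$ sits in strictly positive filtration (as the lemma explicitly allows, since the image in $\D$ need not consist of first-order operators), and that the resulting spectral sequence converges. Once this setup is in place, the core of the argument is the classical fact that a regular sequence has trivial Koszul homology outside degree zero.
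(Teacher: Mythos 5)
The paper does not prove this lemma; it is stated as a known result with references to G.~Schwarz and M.~Holland, so there is no in-paper argument to compare against. Your proposal is the standard filtered-complex proof found in those sources, and its overall architecture (filter the Chevalley--Eilenberg complex, identify the associated graded with a Koszul complex on $\gr\D$, invoke regularity, then run a d\'evissage or spectral-sequence argument using that the filtration on each $C_k$ is exhaustive and bounded below) is correct.

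One step is justified incorrectly, though the conclusion you need is true in the intended setting. You claim $d_L$ strictly drops filtration ``since $uv-vu=[u,v]$ drops one filtration step in $\D$.'' But the term $d_L$ produces $a\otimes[u_i,u_j]\wedge\cdots$, an element of $\D\otimes\LLa^{k-1}{\mathfrak G}$, and $[u_i,u_j]$ contributes to the filtration degree of this element as an element of $\mathfrak G$, not of $\D$. The commutativity of $\gr\D$ tells you that the image $\arr{[u_i,u_j]}$ drops a filtration step in $\D$, but says nothing about the filtration degree of $[u_i,u_j]$ in $\mathfrak G$. What you actually need is the condition $[F_p{\mathfrak G},F_q{\mathfrak G}]\sset F_{p+q-1}{\mathfrak G}$, i.e., that $\gr{\mathfrak G}$ is an abelian Lie algebra. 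This is implicit in the lemma's setup (it holds whenever the filtration on $\mathfrak G$ is the one pulled back from $\D$ along an injective map, and in all of the paper's applications: $\g$ in degree~$1$, and an abelian $\fH_\th\sset\FZ$ commuting with $\g$), but it is a hypothesis on $\mathfrak G$, not a consequence of commutativity of $\gr\D$, and should be stated as such. With this point made explicit, the remainder --- Koszul acyclicity from the regular-sequence hypothesis, positive-degree acyclicity of $C_\bullet$ by induction on filtration degree, surjectivity of \eqref{holl_iso} being formal and injectivity following from acyclicity of $\gr C_\bullet$ in degree~$1$ --- is sound.
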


The algebra morphism $\gr{\mathfrak G}\to\gr\D$ is induced by a moment map
$\mu: T^*X\to (\gr{\mathfrak G})^*$.
The condition of the lemma that the image of $\gr{\mathfrak G}$ be a
regular sequence may be reformulated as a requirement that
 the Koszul complex
\begin{equation}
\label{rezolventa}
0\to\CO_{T^*X}\otimes\LLa^m(\gr{\mathfrak G})
\to\CO_{T^*X}\otimes\LLa^{m-1}(\gr{\mathfrak G})\to\ldots\to\CO_{T^*X}\otimes\LLa^1(\gr{\mathfrak G})\onto
\CO_{T^*X}\to0,
\end{equation}
be a resolution of the  structure
sheaf $\CO_{\bmu^{-1}(0)},$ by free $\CO_{T^*X}$-modules.
 The latter condition 
is also equivalent to the  condition that
$\mu\inv(0),$ the scheme-theoretic zero
fiber of the map $\mu$, be a complete intersection.
\subsection{}\label{RGsec}
Let $\la,\nu\in\bt^*$.
We may view the pair $(\la,\nu)$ as a  linear function
$\la\times\nu:\ \bt\oplus\bt\to\C$ and write
$\bt^{\la,\nu}\sset \D_{x,y}^c$ for the
subspace associated with the $\BT\times\BT$-action on
$\yy_{x,y}$ and with the character $\la\times\nu$, as 
explained in \S\ref{psi}. Let
$\D_{x,y}^c\bt^{\la,\nu}\sset \D_{x,y}^c$ be the corresponding left
ideal.

We observe  that one actually has an equality
$\D_{x,y}^c\bt^{\la,\nu}=\D_{x,y}^c$ unless
the linear function $\la\times\nu$ annihilates
the subspace $\bt_{x,y}\sset \bt\oplus\bt,$ i.e.
unless we have $\dis\la\times\nu\,\in\, \bt_{x,y}^\perp\sset \bt^*\oplus\bt^*$.
It is clear that the latter holds iff there exists
an element $\gamma\in\bt^*$ such that
$\la=x(\gamma)$ and $\nu=-y(\gamma).$

For any $c\in\C$ and  $x,y\in W,$  we introduce
a family of left $\D_{x,y}^c$-modules
\beq{CV}\CV^{\la,\nu,c}_{x,y}:=
\D_{x,y}^c\big/(\D_{x,y}^c\,\g+
\D_{x,y}^c\,\bt^{\la,\nu})\ \in\,\Mon_c(\yy_{x,y}),
\qquad\la\times\nu\in \bt_{x,y}^\perp.
\eeq
It is clear that   $\CV^{\la,\nu,c}_{x,y}$ is a $G$-equivariant
 regular holonomic $\D$-module, by
Corollary \ref{zwei}(i).
\medskip

Next,  put $D_{x,y}:=\Gamma(\yy_{x,y},\
\D^c_{x,y}).$

 \begin{lemma}\label{RGlem} For any
$(\la,\nu)\in\bt_{x,y}^\perp,$ one has:
$\dis\ R\Gamma(\yy_{x,y},\ \CJ^{\la,\nu,c}_{x,y})\simeq
D_{x,y}/(D_{x,y}{}^{\,}\g+D_{x,y}{}^{\,}\bt^{\la,\nu}).$ 
\end{lemma}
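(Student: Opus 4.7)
The plan is to build a Koszul-type resolution of $\CV^{\la,\nu,c}_{x,y}$ using Lemma \ref{holl}, and then take derived global sections. First I identify the right Lie algebra. Since $(\la,\nu)\in\bt_{x,y}^\perp$ and the subalgebra $\bt_{x,y}\subset\bt\oplus\bt$ acts trivially on $\yy_{x,y}$, the twisted map $u\mapsto \vec u-(\la\times\nu)(u)$ from $\bt\oplus\bt$ into $\D^c_{x,y}$ descends to the quotient $(\bt\oplus\bt)/\bt_{x,y}$; its image is precisely $\bt^{\la,\nu}$. Setting $\mathfrak{G}:=\g\oplus(\bt\oplus\bt)/\bt_{x,y}$, one has $\D^c_{x,y}\mathfrak{G}=\D^c_{x,y}\g+\D^c_{x,y}\bt^{\la,\nu}$, and the associated graded map $\gr\mathfrak{G}\to\gr\D^c_{x,y}\cong\CO_{T^*\yy_{x,y}}$ is given by the components of the moment map $T^*\yy_{x,y}\to\g^*\times\bt_{x,y}^\perp$.

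By Corollary \ref{zwei}(ii) this moment map is flat, so its zero fiber is a complete intersection in $T^*\yy_{x,y}$; equivalently, the symbols of $\g$ and $\bt^{\la,\nu}$ form a regular sequence in $\gr\D^c_{x,y}$. Lemma \ref{holl} then furnishes the Chevalley--Eilenberg complex
\[
0\to \D^c_{x,y}\o\LLa^{\dim\mathfrak{G}}\mathfrak{G}\to\cdots\to \D^c_{x,y}\o\mathfrak{G}\to \D^c_{x,y}\to \CV^{\la,\nu,c}_{x,y}\to 0
\]
as a free $\D^c_{x,y}$-module resolution of $\CV^{\la,\nu,c}_{x,y}$.

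Next I apply $R\Gamma(\yy_{x,y},-)$. The key ingredient is the acyclicity $H^{>0}(\yy_{x,y},\D^c_{x,y})=0$, i.e.\ that $R\Gamma(\yy_{x,y},\D^c_{x,y})=D_{x,y}$ is concentrated in degree zero. Granting this, each term of the resolution is $\Gamma$-acyclic, so the hypercohomology spectral sequence collapses and yields a quasi-isomorphism
\[
R\Gamma(\yy_{x,y},\CV^{\la,\nu,c}_{x,y})\ \simeq\ \big[D_{x,y}\o\LLa^{\bullet}\mathfrak{G}\big],
\]
with the Chevalley--Eilenberg differential for $\mathfrak{G}$ acting on $D_{x,y}$ by right multiplication. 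Its $0$-th cohomology is exactly the quotient $D_{x,y}/(D_{x,y}\g+D_{x,y}\bt^{\la,\nu})$, and vanishing of higher cohomology follows from the fact that the sheaf Koszul complex was already a resolution (concentrated in degree zero) and that the passage to global sections was exact, by acyclicity.

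The main obstacle is the acyclicity statement $H^{>0}(\yy_{x,y},\D^c_{x,y})=0$, which plays the role of a Borho--Brylinski theorem for the horocycle-type space $\yy_{x,y}=\tB_{x,y}\times\P$. I would establish it by filtering $\D^c_{x,y}$ by order and using the factorization of the projection $\yy_{x,y}\to\CB\times\CB\times\P$ through the $(\BT\times\BT)/\BT_{x,y}$-torsor $\pr_{x,y}$, reducing to the cohomology of $p_*\CO_{T^*\yy_{x,y}}$ and thereby to a standard computation on the cotangent bundle of a product of (partial) flag varieties; the isomorphism \eqref{bobr2} identifying the torus-invariant global sections serves as a model for the argument.
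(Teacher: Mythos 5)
Your argument is essentially the paper's: a Koszul/Chevalley--Eilenberg resolution furnished by Lemma \ref{holl} (justified by flatness of the moment map on $T^*\yy_{x,y}$, Corollary \ref{zwei}(ii)), followed by applying $R\Gamma(\yy_{x,y},-)$ termwise using the $\Gamma$-acyclicity of $\D^c_{x,y}$. The one point where the paper is more concrete is the vanishing $R^{>0}\Gamma(\yy_{x,y},\D^c_{x,y})=0$: it reduces to the associated graded as you propose, but then explicitly invokes the Grauert--Riemenschneider theorem for the cotangent-bundle cohomology, which your sketch gestures at but leaves unnamed.
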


\begin{proof}
We  use  the Koszul complex
\eqref{Koszul} in the case
where $X=\yy_{x,y}.$ The Koszul
 complex is acyclic, by Proposition \ref{ein moment}(ii) and Lemma \ref{holl}.
It is well known that one has
$R^\ell\Gamma(\yy_{x,y},\
\D^c_{x,y})=0,$
for any $\ell>0$, since this is the case for the
associated graded algebra, thanks to the
Grauert-Riemenschnider theorem.
Thus,  the Koszul complex
\eqref{Koszul} provides a {\em left} $\Gamma$-acyclic resolution 
of $\CJ^{\la,\nu,c}_{x,y}$. Computing
$R\Gamma(\yy_{x,y},\ \CJ^{\la,\nu,c}_{x,y})$ via this resolution
yields the result.
\end{proof}

\begin{corollary}
\label{cjj}  
Assume $\la\in\bt^*$ is regular, and $c\ne-1,-2,\ldots,1-n$.  
We have isomorphisms:
$$\CJ^{\lambda,-\lambda,c}_{1,1}\ \simeq\
\CJ^{\lambda,\lambda^\dag,c}_{1,w_0}\,
*\,\CR^{-\la^\dag,-\la}_*,
\quad\operatorname{resp.}\quad
\CJ^{-\lambda,\lambda,-c}_{1,1}\ \simeq \
\CJ^{-\lambda,-\la^\dag,-c}_{1,w_0}\,*\,\CR^{\la^\dag,\la}_!.
$$
\end{corollary}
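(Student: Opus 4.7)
The plan is to reduce both isomorphisms to comparisons of $\BU_c$-modules via Beilinson-Bernstein; I concentrate on the first, with the second following by the parallel argument after swapping $\jmath_!\leftrightarrow\jmath_*$ in the construction \eqref{rr} of $\CR$ and replacing $(\la,c)$ by $(-\la,-c)$. Applying the inverse equivalence in \eqref{rconv}, the first claim is equivalent to
\beq{reform-cjj}
\CJ^{\la,-\la,c}_{1,1}\,*\,\CR^{\la,\la^\dag}_!\;\simeq\;\CJ^{\la,\la^\dag,c}_{1,w_0}
\eeq
in $\Mon_{\bar\la,\bar\la^\dag,c}(\yy_{1,w_0})$. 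Since both $\la$ and $\la^\dag=-w_0\la$ are dominant regular under the stated hypotheses, the Beilinson-Bernstein equivalence \eqref{BB} and its analogue on $\yy_{1,w_0}$ reduce \eqref{reform-cjj} to an isomorphism of $\BU_c$-modules obtained from the appropriate spectral components of global sections.

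For the right-hand side of \eqref{reform-cjj}, Lemma \ref{RGlem} gives
$\Gamma(\yy_{1,w_0},\CJ^{\la,\la^\dag,c}_{1,w_0}) = D_{1,w_0}/(D_{1,w_0}\g+D_{1,w_0}\bt^{\la,\la^\dag})$; via the surjection $\kappa_{1,w_0}\o\Id$ in \eqref{ucc} this is identified with the $\BU_c$-quotient $\BU_c/(\BU_c\g+\BU_c I_{\la,\la^\dag})$. For the left-hand side, property \eqref{ikey} applied with $(x,y)=(1,1)$ and $\nu=\la$ gives
\beq{ikey-applied}
[R\Gamma(\yy_{1,w_0},\CJ^{\la,-\la,c}_{1,1}*\CR^{\la,\la^\dag}_!)]^{(\la,\la^\dag)}\iso [R\Gamma(\yy_{1,1},\varpi\o\CJ^{\la,-\la,c}_{1,1})]^{(\la,-\la-2\rho)},
\eeq
and tensoring by $\varpi$ shifts the right $\bt$-weight of $\CJ^{\la,-\la,c}_{1,1}$ from $-\la$ to $-\la-2\rho$, so a second application of Lemma \ref{RGlem} identifies the right-hand side of \eqref{ikey-applied} with $\BU_c/(\BU_c\g+\BU_c I_{\la,-\la-2\rho})$.

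It thus suffices to exhibit an isomorphism of $\BU_c$-modules $\BU_c/(\BU_c\g+\BU_c I_{\la,-\la-2\rho}) \cong \BU_c/(\BU_c\g+\BU_c I_{\la,\la^\dag})$. The fibred-product structure $\BU_c=(\Ug\o\Ug)\o_{\FZ\o\FZ}(\Ut\o\Ut)\o\D(\P,c)$ is crucial here: on a quotient on which the diagonal copy of $\g$ acts as zero, the central character of $\FZ$ is pinned down by the left $\bt$-weight $\la$ (via Harish-Chandra), which determines the right $\bt$-weight only up to the dot-action of $W$ on $\bt^*$. The direct computation $w_0\cd(-\la-2\rho) = w_0(-\la-\rho)-\rho = -w_0\la-w_0\rho-\rho = -w_0\la = \la^\dag$ (using $w_0\rho=-\rho$) shows $-\la-2\rho$ and $\la^\dag$ lie in the same $W$-dot-orbit, so the two ideals coincide modulo $\BU_c\g$, completing the proof of \eqref{reform-cjj}. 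The principal obstacle is precisely this last identification: one must verify that the $\varpi$-shift of $-2\rho$ arising in \eqref{ikey-applied} matches exactly the $\rho$-shift built into the Harish-Chandra isomorphism, so that the resulting $W$-dot-equality of central characters on $\FZ\o\FZ$ lifts to an equality of the corresponding ideal quotients of the full fibred product $\BU_c$.
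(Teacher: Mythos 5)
Your proposal mirrors the paper's own proof of Corollary \ref{cjj}: both reduce to the same chain --- Lemma \ref{RGlem}, the quasi-isomorphism \eqref{ikey}, and the Beilinson--Bernstein equivalence --- and your $W$-dot-orbit computation $w_0\cdot(-\la-2\rho)=\la^\dag$ is exactly the ``simple direct calculation'' that the paper invokes without spelling out (the paper treats the second isomorphism while you treat the first, but the two are related by the sign/$\jmath_!\leftrightarrow\jmath_*$ swap you indicate). The one correction worth making is that the final comparison of $\BU_c/(\BU_c\g+\BU_c I_{\la,-\la-2\rho})$ with $\BU_c/(\BU_c\g+\BU_c I_{\la,\la^\dag})$ cannot be literally an isomorphism of $\BU_c$-modules --- the $\Ut\o\Ut$-weights on the two sides are $(\la,-\la-2\rho)$ versus $(\la,\la^\dag)$ and so differ --- but it is an isomorphism of $\Ug\o\Ug\o\Ug$-modules (which is precisely how \eqref{ikey} is phrased), and this suffices because the Beilinson--Bernstein functor on $\yy_{1,w_0}$ applies to the $(\la,\la^\dag)$-spectral component, where the $\Ut\o\Ut$-action is then fixed on both sides.
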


\begin{proof} We only prove the second isomorphism,
the proof of the other one being similar. 
To do this, we first verify, by a simple direct calculation
that
one has a natural isomorphism of $\Ug\o\Ug\o\Ug$-modules
$$D^{-c}_{1,1}/(D^{-c}_{1,1}{}^{\,}\g+D^{-c}_{1,1}{}^{\,}\bt^{-\la,\la})
\cong
D^{-c}_{1,w_0}/(D^{-c}_{1,w_0}{}^{\,}\g+D^{-c}_{1,w_0}{}^{\,}\bt^{-\la,-\la^\dag}).
$$

Hence, using Lemma \ref{RGlem} we may rewrite
the above isomorphism
as follows
$$[R\Gamma(\yy_{1,1},\ \CJ^{-\la,\la,-c}_{1,1})]^{(-\la,\la)}
\cong
[R\Gamma(\yy_{1,w_0},\ \CJ^{-\la,-\la^\dag,-c}_{1,w_0})]^{(-\la,-\la^\dag)}.$$

Further, by \eqref{ikey}, the object
 on the right hand side above is
isomorphic, in the bounded derived category of
$\Ug\o\Ug\o\Ug$-modules, to
$[R\Gamma(\yy_{1,1},\ \CJ^{-\lambda,-\lambda^\dag,-c}_{1,w_0}\,
*\,\CR^{\la^\dag,\la}_!)]^{(-\la,\la)}$.

 Therefore,
the functor $[R\Gamma(\yy_{1,1},-)]^{(-\la,\la)}$
takes $\D$-modules
$\CJ^{-\la,\la,-c}$ and $\CJ^{-\lambda,-\lambda^\dag,-c}_{1,w_0}\,
*\,\CR^{\la^\dag,\la}_!$ to isomorphic objects.
The result now follows from the Beilinson-Bernstein theorem.
\end{proof}

\subsection{}
We keep the setting of section \ref{hor}
and let the Weyl group act on $\bt^*$ via the
dot-action. We use the identification $\Spec\FZ\cong\bt^*/W,$ provided
by the Harish-Chandra isomorphism. 
 
The canonical isomorphism $\tau: \Ug\to(\Ug)^\opp$
restricts to an automorphism $\tau: \FZ\to\FZ$. Write
$\tau: \Spec\FZ\to\Spec\FZ$ for the induced automorphism.
In terms of  the Harish-Chandra isomorphism
$\Spec\FZ\cong\bt^*/W,$ one can write
$\tau(W\cdot \la)=W\cdot(-\la-2\rho).$

Recall the notation from Diagram \eqref{maps}.

\begin{proposition}\label{kwa}
Assume $\la\in\bt^*$ is regular, and $c\ne-1,-2,\ldots,1-n$.
Write $\th\in\bt^*/W$ for the image of $\la$.  Then,
we have an isomorphism
\beq{cjjf}
\CG^{\th,c}=p_*q^!(\CJ^{\la,-\la,c}_{1,1})[-\dim\CB].
\eeq
\end{proposition}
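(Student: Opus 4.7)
The proof hinges on combining Theorem~\ref{hk}(i) with Lemma~\ref{RGlem} to reduce the assertion to an algebraic identity on the level of global sections, and then identifying the resulting quotient of $\BU_c$ with the presentation of $\CG^{\th,c}$ via the map $\eta$ of diagram~\eqref{ucc}.

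The first step is to apply the commutative triangle of Theorem~\ref{hk}(i) to the monodromic holonomic $\D^c_{1,1}$-module $\CV := \CJ^{\la,-\la,c}_{1,1}$, an object of $\Mon_{\bar\la,-\bar\la,c}(\yy_{1,1})$. Commutativity gives an isomorphism in $\dcatchi$
$$p_*q^!(\CJ^{\la,-\la,c}_{1,1})[-\dim\CB]\ \simeq\ \dxc\stackrel{L}{\otimes}_{\BU_c}
R\Gamma(\yy_{1,1},\CJ^{\la,-\la,c}_{1,1})^{(\la,-\la-2\rho)}.$$
By Lemma~\ref{RGlem}, the higher cohomology vanishes and $R\Gamma(\yy_{1,1},\CJ^{\la,-\la,c}_{1,1})$ is canonically identified with the module $D_{1,1}/(D_{1,1}\g + D_{1,1}\bt^{\la,-\la})$, where $D_{1,1}=\Gamma(\yy_{1,1},\D^c_{1,1})$.

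The next step is to show that, viewed as a $\BU_c$-module via $\kappa_{1,1}\o\Id$ of~\eqref{ucc}, the $(\la,-\la-2\rho)$-generalized eigenspace of this module is isomorphic to $\BU_c/(\BU_c\g + \BU_c\eps(I_{\la,-\la-2\rho}))$. The apparent mismatch between the weight $(\la,-\la)$ imposed on the D-module side and the weight $(\la,-\la-2\rho)$ selected on the algebra side is absorbed by the canonical-bundle twist $\varpi_y$ used in the definition~\eqref{ddy} of $\D^c_{x,y}$: right-$\BT$ acts on $\varpi_1$ with weight $-2\rho$, which shifts the Borho-Brylinski image on global sections accordingly, so the monodromy class $(\bar\la,-\bar\la)$ picks out precisely the weight $(\la,-\la-2\rho)$ of $\eps(\Ut\o\Ut)$ inside $\BU_c$. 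Tensoring with $\dxc$ over $\BU_c$ and using the map $\eta=pr\circ a\circ\Id$ of~\eqref{ucc}, which factors through the embedding $\zeta:(\Ug\o_\FZ\Ug)\o\D(\P,c)\hookrightarrow\D(\X,c)$, sends the diagonal $\g\sset\BU_c$ to the diagonal $\g\sset\dxc$, while the image of the central ideal of weight $(\la,-\la-2\rho)$ is identified with $\FZ^c_\th$. Here the two central characters coming from the two copies of $\FZ\sset\Ug\o\Ug$ coincide after passage to $\Ug\o_\FZ\Ug$, thanks to the relation $\lr(z\o1)=\lr(1\o\tau(z))$ of \S\ref{hor2} and the identity $\tau(W\!\cdot\!\la)=W\!\cdot\!(-\la-2\rho)$: the $-2\rho$ shift on the second factor is precisely what is needed for consistency in $\Ug\o_\FZ\Ug$. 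Consequently,
$$\dxc\,\otimes_{\BU_c}\,\BU_c/(\BU_c\g+\BU_c\eps(I_{\la,-\la-2\rho}))\ \cong\ \dxc/(\dxc\g+\dxc\FZ^c_\th)\ =\ \CG^{\th,c}.$$

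The main obstacle is to verify that the derived tensor product on the right-hand side of the first displayed equation coincides with the underived one on this particular module, so that the resulting isomorphism holds in $\ccat$ and not merely in $\dcatchi$. For regular dominant $\la$ and $c\notin\{-1,\ldots,1-n\}$, this follows from the Beilinson-Bernstein equivalence~\eqref{BB}, which makes $\dxc\otimes_{\BU_c}(-)$ exact on the $(\la,-\la-2\rho)$-generalized eigenspace. Alternatively, one verifies Tor-vanishing directly using the Koszul resolution of Lemma~\ref{holl}, whose acyclicity is ensured by flatness of the moment map established in Corollary~\ref{zwei}(ii).
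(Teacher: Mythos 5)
Your proof follows essentially the same route as the paper's: apply the commutative triangle of Theorem~\ref{hk}(i) to $\CJ^{\la,-\la,c}_{1,1}$, identify the global sections via Lemma~\ref{RGlem}, and then recognize the resulting $\BU_c$-module presentation as the one defining $\CG^{\th,c}$. The paper packages the algebraic identification through the intermediate object $\CU_{\th,\tau(\th),c}/\CU_{\th,\tau(\th),c}\Delta(\g)$ and the complex ${}^L\CG^{\th,c}$, whereas you work directly with $\BU_c/(\BU_c\g+\BU_c\eps(I_{\la,-\la-2\rho}))$; this is a harmless difference of bookkeeping, and your explicit discussion of the $-2\rho$ shift arising from the $\varpi_y$-conjugation in~\eqref{ddy} makes the eigenspace selection more transparent than in the paper.

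However, the justification in your last paragraph is imprecise. The Beilinson--Bernstein equivalence~\eqref{BB} concerns the functor $\D^c_{1,1}\stackrel{L}{\otimes}_{\BU_c}(-)$ on $\yy_{1,1}$, not $\dxc\stackrel{L}{\otimes}_{\BU_c}(-)$ on $\X=G\times\P$, so it does not directly yield exactness of the latter. Your alternative via the Koszul resolution of Lemma~\ref{holl} is the right idea, but the relevant flatness input is not Corollary~\ref{zwei}(ii) (which concerns the moment map on $T^*\yy_{x,y}$); what is needed is flatness of the moment map $T^*\X\to(\gr(\g\oplus\fH_\th))^*$, which is the content of Claim~\ref{cl} in the proof of Proposition~\ref{twist} (i.e.\ \cite{GG}, Proposition~2.5). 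With that reference in place, the Koszul argument gives the required Tor-vanishing and the isomorphism does hold in $\ccat$ rather than merely in $\dcatchi$.
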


\begin{proof}
 Given $\th\in\Spec\FZ,$ we introduce the notation
$\CU_\th:=\Ug/\Ug\cdot\FZ_\th$. Further, write
 $\CU_{\th,\tau(\th),c}:=\CU_\th\otimes\CU_{\tau(\th)}\otimes\CU_c$. We
introduce a coproduct $\Delta$, an algebra map defined on generators as follows
$$\Delta:\
\Ug\too (\Ug)^{\otimes3}\onto
\CU_{\th,\tau(\th),c},\quad
x\mapsto1\otimes1\otimes x+1\otimes x\otimes1+x\otimes1\otimes1.
$$

Clearly, we have $\CG^{\th,c}=\D_{G,\th}
\otimes_{\CU_{\th,\tau(\th)}}
\left(\CU_{\th,\tau(\th),c}/
\CU_{\th,\tau(\th),c}
{}^{\,}\Delta(\g)\right)$,
where  $\D_{G,\th}:=\D_G/\D_G\,\lr(\FZ_\th\o1).$
Observe further that the $\D$-module $\CG^{\th,c}$ is the top (zeroth) cohomology module of the 
complex
\begin{equation}\label{RG}{}
^L\CG^{\th,c}:=\D_{G,\th}
\stackrel{L}{\otimes}_{\CU_{\th,\tau(\th)}}
\left(\CU_{\th,\tau(\th),c}/
\CU_{\th,\tau(\th),c}
{}^{\,}\Delta(\g)\right).
\end{equation}

To complete the proof, we must show, in view of Theorem ~\ref{hk}(i), that
the canonical morphism $\dis ^L\CG^{\th,c}\to\CG^{\th,c}$
is a quasi-isomorphism.

To this end, recall that the canonical line bundle $\omega_\CB$ is isomorphic 
to the line bundle $\CO_\CB(-2\rho)$.
Thus, we must prove
$R\Gamma(\CB\times\CB\times\P,\CJ^{\la,-\la-2\rho,c}_{1,1})=
\CU_{\th,\tau(\th),c}/
\CU_{\th,\tau(\th),c}{}^{\,}\Delta(\g)$.
But this last isomorphism holds by
Lemma \ref{RGlem}, and we are done.
\end{proof}

\subsection{} We are going to
reduce  the proof of Theorem 
\ref{kthm} to Theorem \ref{bthm} of Section 2.

To this end, recall that associated with any
integral weight $\nu\in\BT^*_\Z$, there
is a $G$-equivariant line bundle $\oo(\nu)$ on $\CB$.
We put $\lll:=\CO_{\tB_{1,w_0}}\boxtimes\CO_\P(n),$
a $G\times\BT\times\BT$-equivariant line bundle
on $\yy_{1,w_0}$.
Further,
we introduce a character
 $\phi:=(0,2\rho,2\rho)\in\g^*\times\bt_{1,w_0}^\perp\sset\g^*\times\bt^*\times\bt^*.$ 

The following result  shows, in particular,
that the open imbedding $j: U\into \yy_{1,w_0}$ is an affine  morphism.

\begin{lemma}\label{s}  
There is a $\phi$-semi-invariant
(with respect to the
$G\times\BT\times\BT$-action)
section $s\in \Gamma(\yy_{1,w_0}, \lll),$
such that one has $\ s\inv(0)=\yy_{1,w_0}\sminus U.$
\end{lemma}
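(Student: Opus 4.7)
The plan is to build $s$ as an explicit product of canonical $G$-invariant Pl\"ucker-type sections, one for each irreducible divisor of the complement of $U$. Two preliminary observations will streamline the argument: the projection $\pi:\yy_{1,w_0}\to\CB\times\CB\times\P$ is a torsor under the quotient torus $(\BT\times\BT)/\BT_{1,w_0}$, so $U=\pi^{-1}(U^\circ)$ for the unique open $G$-orbit $U^\circ$ supplied by Proposition~\ref{ein moment}(i); and a global section of $\lll$ of $\BT\times\BT$-weight $(\mu,\mu^\dag)$ on $\yy_{1,w_0}$ corresponds, via $\pi$, to a section of $\CO_\CB(\mu)\boxtimes\CO_\CB(\mu^\dag)\boxtimes\CO_\P(n)$ on $\CB\times\CB\times\P$. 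Hence it suffices to produce a $G$-invariant section of $\CO_\CB(2\rho)\boxtimes\CO_\CB(2\rho)\boxtimes\CO_\P(n)$ that vanishes precisely on the complement of $U^\circ$.

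Next I would describe $U^\circ$ concretely: a triple $(F_\bullet,F'_\bullet,l)$ lies in $U^\circ$ iff the two flags are in general position (equivalently $F_j\cap F'_{n-j}=0$ for $j=1,\dots,n-1$) and, in the resulting decomposition $\C^n=L_1\oplus\cdots\oplus L_n$ with $L_i:=F_i\cap F'_{n-i+1}$, the line $l$ has nonzero component in each $L_i$, equivalently $l\not\subset F_{i-1}+F'_{n-i}$ for every $i=1,\dots,n$. The codimension-one complement thus splits into two families of irreducible divisors: $D_j$ ($j=1,\dots,n-1$), where general position of flags fails at level $j$, and $E_i$ ($i=1,\dots,n$), where the line-incidence condition fails at level $i$. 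Using the $SL_n$-invariant volume form $\vol\in(\Lambda^n\C^n)^*$, I would construct canonical $G$-invariant defining sections: the Pl\"ucker vectors $[F_j]\in\Lambda^j\C^n$ and $[F'_{n-j}]\in\Lambda^{n-j}\C^n$ pair to give $\Delta_j:=\vol([F_j]\wedge[F'_{n-j}])\in\Gamma(\CB\times\CB,\,\CO(\omega_j)\boxtimes\CO(\omega_j^\dag))$, whose zero locus is $D_j$; and $s_i:=\vol([F_{i-1}]\wedge[F'_{n-i}]\wedge l)$ defines a section of $\CO_\P(1)\boxtimes\CO(\omega_{i-1})\boxtimes\CO(\omega_{n-i})$ vanishing on $E_i$.

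Finally, set $s:=\prod_{j=1}^{n-1}\Delta_j\cdot\prod_{i=1}^n s_i$. The first $\CB$-weight sums to $\sum_{j=1}^{n-1}\omega_j+\sum_{i=1}^n\omega_{i-1}=\rho+\rho=2\rho$; via the involution $\omega_j^\dag=\omega_{n-j}$ the second $\CB$-weight is also $2\rho$; the $\P$-weight is $\CO_\P(n)$. Consequently the pullback of $s$ to $\yy_{1,w_0}$ is a section of $\lll$ of $G\times\BT\times\BT$-weight $\phi=(0,2\rho,2\rho)$. Each factor is nonzero on $U^\circ$, so $s$ is nonzero on $U$, while $\{s=0\}\subset\bigcup_j D_j\cup\bigcup_i E_i=\CB\times\CB\times\P\setminus U^\circ$, which yields the set-theoretic equality $s^{-1}(0)=\yy_{1,w_0}\setminus U$. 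The one piece needing care is that when the flags degenerate, the wedge $[F_{i-1}]\wedge[F'_{n-i}]$ can vanish identically in $l$, so $s_i$ picks up extra vanishing along some $D_j$; however this only contributes extra multiplicity on $D_j$ and does not introduce new irreducible components, so the set-theoretic zero locus of $s$ is exactly as claimed.
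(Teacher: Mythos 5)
Your section $s$ is, up to a reindexing, identical to the one in the paper: your $\Delta_j$ are the paper's $s_j$ for $j=1,\dots,n-1$, and your $s_i$ (for $i=1,\dots,n$) are the paper's $\varsigma_{i-1}$ (for $i-1=0,\dots,n-1$). The weight bookkeeping is the same. What differs is how the set-theoretic identity $s^{-1}(0)=\yy_{1,w_0}\setminus U$ is justified: the paper invokes the Magyar--Weyman--Zelevinsky classification of $G$-orbits on $\CB\times\CB\times\P$ (cited in Proposition~\ref{ein moment}) to identify the codimension-one orbits with the zero loci of the individual factors, whereas you give an explicit description of the open orbit $U^\circ$ --- flags in general position, $l$ cyclic for the induced grading $\C^n=\oplus L_i$ --- and observe directly that every point outside $U^\circ$ kills at least one factor. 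Your route is more self-contained (it never needs the codimension-one stratum to exhaust the boundary), but it shifts the burden to the claim that this locus \emph{is} the open orbit; that claim is true (the diagonal stabilizer of a pair of opposite flags is a maximal torus, acting on $\P^{n-1}$ with open orbit the cyclic lines), but you assert rather than verify it. Your remark that $s_i$ picks up extra vanishing along the flag-degeneracy divisors, contributing multiplicity but no new components, is a correct refinement that the paper does not make explicit.
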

\begin{proof}
 Let $V=\C^n$ and let $\omega_i,\ i=1,\ldots,n-1$,
stand for the fundamental weights of $G$. We will write $\omega_0=0$.
Fix a volume functional $\vol: \LLa^nV\to\C$.

The line bundle
$\oo({\omega_1})$ on $\CB$ descends to the
line bundle $\CO_\P(1)$ on $\P$, 
and the space of its global
sections is canonically isomorphic to $V^*$. Let $s_i$ stand for the global section of 
$\oo({\omega_i})\boxtimes\oo({\omega_{n-i}})$ which sends $v_i\otimes
v_{n-i}\in \LLa^iV\otimes\LLa^{n-i}V=\Gamma[\CB\times\CB,
\oo({\omega_i})\boxtimes\oo({\omega_{n-i}})]^*$ to the volume of
$v_i\wedge v_{n-i}$.
Let $s_i$ be its lift to a global section of 
$\oo({\omega_i})\boxtimes\oo({\omega_{n-i}})\boxtimes\CO_\P$.

Let $\varsigma_j,\,0\leq j\leq n-1,$ be the global section
of $\oo({\omega_j})\boxtimes\oo({\omega_{n-1-j}})\boxtimes\CO_\P(1)$
such that, for any
$$v_j\otimes v_{n-1-j}\otimes
v\in\LLa^jV\otimes\LLa^{n-1-j}V\otimes
V=\Gamma(\CB\times\CB\times\P,
\oo({\omega_j})\boxtimes\oo({\omega_{n-1-j}})\boxtimes\CO_\P(1))^*,
$$
we have that $\langle \varsigma_j,v_j\otimes v_{n-1-j}\otimes
v\rangle=\vol(v_j\wedge v_{n-1-j}\wedge v).$

 Finally, we denote by $s$ the
global section $s_1\ldots s_{n-1}\varsigma_0\ldots\varsigma_{n-1}$ of
the product $\oo({2\rho})\boxtimes\oo({2\rho})\boxtimes\CO(n)\simeq
\omega^{-1}_{\CB\times\CB\times\P}$ of the
above line bundles on $\CB\times\CB\times\P$.

Now, we use an explicit classification of $G$-diagonal orbits in
$\CB\times\CB\times\P$ given in \cite[\S 2.11]{MWZ}.
The  classification shows
that any codimension one $G$-orbit
in $\yy_{1,w_0}$ is equal (locally) to  the zero locus
of either  one of the sections $s_i$ or of one of the sections
$\varsigma_j$. One deduces that 
 the set $(\CB\times\CB\times\P)\sminus
s^{-1}(0)$
is the open $G$-diagonal orbit in $\CB\times\CB\times\P$.
The statement of the lemma easily follows from this.
\end{proof}

\begin{remark}\label{masaki} It is likely that there is an explicit closed expression,
as a product of linear factors,
for the $b$-function associated with the section $s$ of Lemma \ref{s}.
 We expect that such an expression may be obtained
by adapting  arguments used by M. Kashiwara in \cite{K3}. The knowledge
of the roots of the $b$-function  would make it possible to give
explicit sharp bounds on the parameter `$c$' which are necessary and sufficient 
for the statement of Theorem \ref{kthm}
to hold true.
\end{remark}

\begin{lemma}\label{tr} Let $\lll$ and $\phi$ be as in Lemma
\ref{s}. For any $x\in \yy_{1,w_0}\sminus U$,
using the notation of formula \eqref{match},
we have  $\chi_{\lll,x} \neq \phi|_{\g_x}$.
\end{lemma}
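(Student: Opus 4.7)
The plan is to perform a local analysis at a generic point of each irreducible component of $\yy_{1,w_0}\setminus U$, reducing the inequality of characters to the assertion that the stabilizer of such a point acts non-trivially on the conormal line of that component; this last assertion is then checked using the explicit description of the divisor of $s$ given in the proof of Lemma~\ref{s}.

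More precisely, I would fix $x\in\yy_{1,w_0}\setminus U$, take $D$ to be the unique irreducible component of $\yy_{1,w_0}\setminus U$ through $x$ (at the cost of passing to a generic point, which suffices since condition \eqref{match} is a condition about all $x$ in the complement and can be checked generically on each component), and let $a\geq 1$ be the order of vanishing of $s$ along $D$. Since the $G\times\BT\times\BT$-action has finitely many orbits, the completed local ring at $x$ admits a local equation $t$ for $D$ that is $G_x$-semi-invariant (up to higher order in $t$) with character $\chi_t:G_x\to\C^\times$. Writing $s=t^a\cdot s_0$ with $s_0$ a local non-vanishing section of $\lll$, and applying $g\in G_x$ to both sides of $g^*s=\phi(g)\cdot s$, one compares the leading $t^a$ coefficients to obtain
\[
\phi(g)\;=\;\chi_t(g)^a\cdot\chi_{\lll,x}(g),\qquad\forall g\in G_x,
\]
which on Lie algebras gives the identity
\[
\chi_{\lll,x}-\phi\big|_{\g_x}\;=\;-a\cdot d\chi_t\;\in\;\g_x^*.
\]
Via the identification of $d_xt$ with a generator of the conormal line $N^*_{D/\yy_{1,w_0}}\big|_x$, the character $d\chi_t$ is precisely the character of $\g_x$ on this conormal line. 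Thus $\chi_{\lll,x}\ne\phi|_{\g_x}$ is equivalent to the assertion that $\g_x$ acts non-trivially on $N^*_{D/\yy_{1,w_0}}\big|_x$.

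For the second step, I would enumerate the irreducible codimension-one $G\times\BT\times\BT$-stable divisors in $\yy_{1,w_0}$. By Proposition~\ref{ein moment}(i) and the proof of Lemma~\ref{s}, they are (pullbacks from $\CB\times\CB\times\P$ of) the $2n-1$ zero divisors of the sections $s_1,\ldots,s_{n-1},\varsigma_0,\ldots,\varsigma_{n-1}$, each being the closure of a single $G$-orbit with an explicit representative built from standard flags and a vector in $V=\C^n$. For each such $D$ I would pick a convenient base point, describe $\g_x$ (it contains a non-trivial central one-parameter subgroup of the Levi of a suitable parabolic), and exhibit an element of $\g_x$ scaling the vanishing coordinate (i.e., the defining minor $\vol(v_i\wedge v_{n-i})$ or $\vol(v_j\wedge v_{n-1-j}\wedge v)$) with a non-zero weight. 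This shows the conormal character is non-zero in every case, completing the proof.

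The main obstacle is the case-by-case verification in Step 2; the cases follow a uniform pattern, since the normal direction to each $D$ corresponds to infinitesimally breaking a linear-algebraic coincidence among vectors in $V$, and the stabilizer of a generic representative always contains a one-parameter subgroup that rescales precisely that coincidence with a non-trivial weight.
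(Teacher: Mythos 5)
The paper's proof is quite different and much shorter: it invokes Travkin's result \cite[Lemma 7]{T}, which produces, for \emph{each} point $x\in\yy_{1,w_0}\sminus U$, a one-parameter subgroup $\gamma\colon\C^\times\to G_x$ into the $SL_n$-component of the isotropy group and a positive integer $m$ with $\chi_{\lll,x}\ccirc\gamma(z)=z^m$. Since $\phi=(0,2\rho,2\rho)$ vanishes on the $\g=\sv_n$-summand of $\g\oplus\bt\oplus\bt$, one has $\phi\ccirc\gamma\equiv1$, so the two characters already disagree on the one-dimensional subalgebra $d\gamma(\C)\subset\g_x$.

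Your argument, by contrast, has a genuine gap. The reduction to generic points of the irreducible components of $\yy_{1,w_0}\sminus U$ is not justified: inequality \eqref{match} is required at \emph{every} boundary point, including those in $G\times\BT\times\BT$-orbits of codimension at least two, where the isotropy algebra $\g_x$ jumps and the character $\chi_{\lll,x}$ changes discontinuously, and nothing you write propagates the inequality from a generic $x_0\in D$ to a special $x\in\overline D$ with strictly larger stabilizer. Relatedly, the local identity $\chi_{\lll,x}-\phi|_{\g_x}=-a\,d\chi_t$ is derived assuming a single component of the boundary divisor passes through $x$; at a point lying on several components the right-hand side becomes a sum $\sum_i a_i\,d\chi_{t_i}$, which could a priori vanish even when each individual conormal character is nonzero. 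Finally, the case-by-case check in your Step 2 is only sketched; carrying it out would be considerably more laborious than citing Travkin's lemma, which was proved precisely to give the uniform statement needed here.
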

\begin{proof} Fix a point  $x\in \yy_{1,w_0}\sminus U$,
and let $G_x$ be its isotropy group in $G$.
Travkin has shown in \cite[Lemma 7]{T} that there
exists
a 1-parameter subgroup $\gamma:\C^\times\to G_x$
and an integer $m>0$
such that, for any $z\in \C^\times,$ one has
$\chi_{\lll,x}\ccirc\gamma(z)=z^m$.
On the other hand, since
$\phi:=(0,2\rho,2\rho)$, we have $\phi \ccirc\gamma(z)=1$.
We conclude that $\chi_{\lll,x}\ccirc\gamma \neq \phi\ccirc\gamma$,
hence $\chi_{\lll,x}\neq \phi|_{\g_x}$.
\end{proof}

\subsection{Proof of Theorem \ref{kthm}}\label{hthpf}
For any $\la\in\bt^*$, the sheaf
$\lll^{\la,c}:=j^*\CJ^{\lambda,\lambda^\dag,c}_{1,w_0}$
is
 a locally free rank one $G$-equivariant $\CO_U$-module, i.e. a line
bundle on ~$U$.

\begin{lemma}\label{CJJ}
For dominant
 enough $\lambda,$ and $c\gg0$, the canonical
morphisms below induce isomorphisms
$$
\CJ^{\lambda,\lambda^\dag,c}_{1,w_0}\iso 
j_*j^*\CJ^{\lambda,\lambda^\dag,c}_{1,w_0},
\quad\operatorname{resp.}\quad
j_!j^!\CJ^{\lambda,\lambda^\dag,c}_{1,w_0}\iso
\CJ^{\lambda,\lambda^\dag,c}_{1,w_0}.
$$
\end{lemma}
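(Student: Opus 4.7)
The plan is to deduce the lemma from Theorem \ref{bthm}(ii) applied to the smooth variety $X=\yy_{1,w_0}$ equipped with the natural action of the connected reductive group $H := G \times (\BT\times\BT)/\BT_{1,w_0}$, which is the effective quotient of the $G\times\BT\times\BT$-action (the subgroup $\BT_{1,w_0}$ acts trivially on $\tB_{1,w_0}$ by construction). One verifies the dimension equality $\dim H = \dim\yy_{1,w_0}$ via $\dim\tB_{1,w_0} = 2\dim\tB - \dim\BT = \dim G$ and $\dim\P = n-1 = \dim\BT$. The finiteness of $H$-orbits follows from Corollary \ref{zwei}(i), and $\D^c_{1,w_0}$ is a locally trivial TDO on $\yy_{1,w_0}$.

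Next I would verify the remaining geometric hypotheses of Theorem \ref{bthm}(ii) with the $H$-equivariant line bundle $\lll = \CO_{\tB_{1,w_0}} \boxtimes \CO_\P(n)$ and character $\phi = (0,2\rho,2\rho)$ from Lemma \ref{s}. Using $w_0(2\rho)=-2\rho$, one checks that $\phi$ annihilates $\bt_{1,w_0}$ and hence descends to a character of $\Lie H = \g\oplus\bt$, while the section $s$ of Lemma \ref{s} is $\phi$-semi-invariant with vanishing locus $\yy_{1,w_0}\sminus U$, as required. The isotropy inequality $\chi_{\lll,x}\neq\phi|_{\fh_x}$ for $x\in\yy_{1,w_0}\sminus U$ follows a fortiori from Lemma \ref{tr}, since $\g_x\sseq\fh_x$ implies that a character discrepancy on the smaller subalgebra persists on the larger.

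It remains to identify the target module $\CJ^{\la,\la^\dag,c}_{1,w_0}$ with some $\D_k/\D_k\fh^{\psi+k\phi}$ to which Theorem \ref{bthm}(ii) applies. Conjugation by $\lll^{\otimes k}$ is trivial along $\tB_{1,w_0}$ and shifts the twist along $\P$ by $nk$, while simultaneously shifting the natural $\fh$-character by $k\phi$. Given the target $(\la,c)$ and any integer $k$, I would take the base TDO $\D^{c_0}_{1,w_0}$ with $c_0 := c - nk$ (up to sign convention) and the base character $\psi := (0,\la-2k\rho,\la^\dag-2k\rho)$, giving a canonical identification $\D_k/\D_k\fh^{\psi+k\phi} \cong \CJ^{\la,\la^\dag,c}_{1,w_0}$.

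The first isomorphism of the lemma then follows from Theorem \ref{bthm}(ii) applied at $k \ll 0$, and the second follows from Theorem \ref{bthm}(ii) applied at $k \gg 0$ (combined with $j^! = j^*$, which holds for the open embedding $j$). The threshold on $|k|$ is governed by the $b$-function of $j^*\CJ^{\la,\la^\dag,c}_{1,w_0}$ via Lemmas \ref{bfunction}, \ref{zabyl}, and \ref{SS}; the assumptions that $\la$ is sufficiently dominant and $c \gg 0$ ensure that the appropriate extremal values of $k$ are attainable. The main obstacle is the careful bookkeeping of the twists in the identification step, together with the verification that the $k$-extremality required by Theorem \ref{bthm}(ii) is indeed compatible with the stated positivity conditions on $(\la,c)$.
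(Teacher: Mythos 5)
Your proposal follows essentially the same route as the paper: apply Theorem \ref{bthm}(ii) to $\yy_{1,w_0}$ with the line bundle $\lll$ and semi-invariant section $s$ from Lemma \ref{s}, verify condition \eqref{match} via Lemma \ref{tr}, and track the shift of twist and $\fh$-character under conjugation by $\lll^{\otimes k}$ to identify the resulting $\D_k/\D_k\fh^{\psi+k\phi}$ with $\CJ^{\la,\la^\dag,c}_{1,w_0}$. The paper records the same argument more tersely, writing out the two explicit formulas (with the sign of $k$ flipped in one of them so both are stated for $k\gg0$) that your bookkeeping step supplies.
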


\begin{proof} We apply
Theorem~\ref{bthm}
to  $X:=\yy_{1,w_0}$ and to the section $s$ from Lemma \ref{s}.
Condition \eqref{match} of Theorem~\ref{bthm}
holds thanks to Lemma \ref{tr}.
Thus,  Theorem~\ref{bthm} says that, given $\lambda\in\bt^*,\
c\in\C,$ for all integers $k\gg0$, one has
$$\CJ^{-\lambda-k\rho,\ -\la-k\rho,\ -c-n}_{1,w_0}\simeq
j_{*}\lll^{-\lambda-k\rho,\ -\la-k\rho,\ -c-kn}_{1,w_0},
\quad\operatorname{resp.}\quad
\CJ^{\lambda,\ \la^\dag+k\rho,\ c+kn}_{1,w_0}\simeq j_!\lll^{\lambda,\ \la^\dag+k\rho,\ c+kn}_{1,w_0}.
$$
This proves the lemma.
\end{proof}
\smallskip

We can now complete the proof of Theorem \ref{kthm}.
To this end, we combine
 Corollary \ref{cjj} with Lemma \ref{CJJ}.
We deduce that, for all sufficiently dominant
 enough $\lambda\in\bt^*$ and $c\gg0$,
one has
$$
\CJ^{-\lambda,\lambda,-c}_{1,1}\ \simeq \
(j_*\lll^{-\lambda,-\la^\dag,-c}_{1,w_0})*\CR^{\la^\dag,\la}_!,
\quad\operatorname{resp.}\quad
\CJ^{\lambda,-\lambda,c}_{1,1}\ \simeq\
(j_!\lll^{\lambda,\lambda^\dag,c}_{1,w_0})
*\CR^{-\la^\dag,-\la}_*.
$$

Thus, using the isomorphism in \eqref{cjjf}, we obtain
$$
\CG^{\th,c}=p_*q^!(\CJ^{\la,-\la,c}_{1,1})[-\dim\CB]
=p_*q^!(\varpi\o
(j_*\lll^{-\lambda,-\la^\dag,-c}_{1,w_0}*\CR^{\la^\dag,\la}_!))
=\ch(j_*\lll^{-\lambda,-\la^\dag,-c}_{1,w_0}),
$$
and the theorem is proved.\qed

\section{Further properties of the  mirabolic Harish-Chandra
$D$-module}

\subsection{}\label{le} Let $X$ be a smooth manifold and
let $\Delta:\ X\to X\times X$ be the diagonal embedding. 
Given  a class
$\chi\in H^2(X, \Om^{1,2}_X)$, we put
$\D_{\chi,-\chi}:=\D_\chi\boxtimes\D_{-\chi}$, a TDO  on $X\times X$.
In the notation
of ~\cite{K}, 2.8, we have that $\Delta^\sharp\D_{\chi,-\chi}$ is the sheaf
of  {\em non-twisted} differential operators on $X$. 
The category 
$\D_\chi\dash\module\dash\D_\chi,$ of $\D_\chi\dash\D_\chi$-bimodules,
is equivalent
to the category of $\D_\chi\boxtimes\D_{\omega-\chi}$-modules.

The pushforward
 $\D_{\chi,-\chi}$-module, $\Delta_*\CO_X$, viewed as a left $\D_\chi$-module,
is canonically isomorphic to $\D_\chi\otimes_{\CO_X}\omega^{-1}_X$, 
 see ~\cite{K}, 2.11. On the other hand, $\D_\chi$,
the diagonal $\D_\chi\dash\D_\chi$-bimodule
viewed as a left $\D_\chi\boxtimes\D_{\omega-\chi}$-module,
is canonically isomorphic to 
$\Delta_*\CO_X\otimes \pr_2^*\omega_X$ where $\pr_2: X\times X\to X$ is the
 second projection.

Given a holonomic $\D_\chi$-module $\CF$, the complex 
${R{\scr H}_{\!}om}_{\D_\chi}(\CF,\D_\chi)$ 
has the only cohomology in degree $d=\dim X$, so it is quasi-isomorphic to 
${{\scr E}xt}{}^d_{\D_\chi}(\CF,\D_\chi)$.
The right action of $\D_\chi$ on itself gives rise to the right action
of $\D_\chi$ on ${{\scr E}xt}{}^{d}_{\D_\chi}(\CF,\D_\chi)$.
Thus we have a $\D_\chi^\opp=\D_{\omega-\chi}$-module
${{\scr E}xt}{}^{d}_{\D_\chi}(\CF,\D_\chi)$, and we define
the $\D_{-\chi}$-module $\BD(\CF):=\omega^{-1}_X\otimes_{\CO_X}
{{\scr E}xt}{}^{d}_{\D_\chi}(\CF,\D_\chi)$.

Fix a Lie algebra $\g$  of dimension $m$,
with {\em modular} character  $\delta(-)=\Tr\ad$.
We use the notation  of \S~\ref{psi}, and 
write $\mu: T^*X\to \g^*$ for the moment map.

\begin{lemma}\label{duality_lem} Assume that 
the moment map  $\mu: T^*X\to \g^*$ is flat. Then, we have
a natural isomorphism
$$R{{\scr H}_{\!}om}_\D(\D/\D{}^{\,}\g^\psi,\D)\cong 
\D^\opp/\D^\opp{}^{\,}\g^{\delta-\psi}[-\dim X].$$
\end{lemma}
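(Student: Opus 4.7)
The plan is to construct a concrete free resolution of $\D/\D\g^\psi$ using the Chevalley--Eilenberg complex, dualize it termwise, and then identify the resulting cochain complex with a shift of the analogous Chevalley--Eilenberg resolution of $\D^\opp/\D^\opp\g^{\delta-\psi}$. First, flatness of the moment map $\mu:T^*X\to\g^*$ means that $\mu^{-1}(0)$ is a complete intersection in $T^*X$, i.e.\ the image of $\gr\g\to\gr\D$ is a regular sequence. Hence by Lemma~\ref{holl}, the $\psi$-twisted Chevalley--Eilenberg complex
$$K_\bullet:\ 0\to \D\o\Lambda^m\g\to \D\o\Lambda^{m-1}\g\to\cdots\to\D\o\g\to\D\to 0,$$
with $m:=\dim\g=\dim X$, is a locally free resolution of $\D/\D\g^\psi$ by left $\D$-modules, whose rightmost map sends $D\o u\mapsto D\cdot(\arr u-\psi(u))$.

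Next, because each term $\D\o\Lambda^k\g$ is a free left $\D$-module, $R\mathcal{H}om_\D(\D/\D\g^\psi,\D)$ is computed termwise by $\mathcal{H}om_\D(K_\bullet,\D)$, which equals the cochain complex
$$K^\bullet:\ 0\to\D\to\D\o\g^*\to\cdots\to\D\o\Lambda^m\g^*\to 0,$$
placed in degrees $0,\dots,m$, with the remaining $\D$-action given by right multiplication on $\D$. Thus $K^\bullet$ is a complex of right $\D$-modules, i.e.\ of left $\D^\opp$-modules, and its differential is the $\psi$-twisted Chevalley--Eilenberg coboundary built from right multiplication by $\arr u$ and the coadjoint action of $\g$ on $\Lambda^\bullet\g^*$.

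To finish, I would use the canonical Lie-algebraic Poincar\'e duality isomorphism $\Lambda^k\g^*\iso\Lambda^{m-k}\g\o\det\g^*$ to reindex $K^\bullet$. Since the adjoint action of $\g$ on $\det\g$ is by the modular character $\delta$, the coadjoint action on $\det\g^*$ is by $-\delta$. Combining this with the fact that passing from left to right multiplication by $\arr u$ turns the original $\psi$-twisted Koszul differential into one twisted by $-\psi$ (now over $\D^\opp$), the total character twist appearing in the reindexed complex is $(-\psi)+(-(-\delta))=\delta-\psi$. Thus, after the shift $[-m]$, the reindexed $K^\bullet$ becomes the Chevalley--Eilenberg free resolution of $\D^\opp/\D^\opp\g^{\delta-\psi}$; taking $H^0$ yields the desired isomorphism.

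The main obstacle is carrying out this last identification precisely: one must match signs carefully in the dualized Koszul differential after the $\Lambda^k\g^*\cong\Lambda^{m-k}\g\o\det\g^*$ swap, and verify that the combined character shift is exactly $\delta-\psi$ rather than some other combination. This is essentially the standard "twisted Calabi--Yau of dimension $m$ with twist $\delta$" property of $U\g$ transplanted to $\D$; once the Koszul resolution is granted by flatness of $\mu$, the local calculation on $\D$ reduces to the enveloping-algebra case because $\D$ is locally isomorphic to $\D_X$ and the Chevalley--Eilenberg differential depends only on the Lie algebra structure of $\g$ and its character $\psi$.
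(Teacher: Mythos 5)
Your proposal is correct and follows essentially the same route as the paper's proof: apply Lemma~\ref{holl} to get the Chevalley--Eilenberg/Koszul resolution from flatness of $\mu$, dualize it termwise, use the identification $\Lambda^p\g^*\cong\Lambda^{m-p}\g\otimes\Lambda^m\g^*$ to reindex, and read off the answer (the paper checks acyclicity by passing to the associated graded and then computes the cokernel of the last arrow, which is exactly the $H^m$ you extract after identifying the reindexed complex as a Koszul complex for the twist $\delta-\psi$).
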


\begin{proof} By assumptions, we may apply Lemma \ref{holl}
to the Lie algebra ${\mathfrak G}:=\g$, all placed in filtration degree
1,
and to the Lie algebra map $\g\to\D,\, u\mto \arr{u}-\psi(u)\cdot 1.$
We conclude that the corresponding complex \eqref{Koszul}
provides a free $\D$-module resolution of $\D/\D{}^{\,}\g^\psi$.

Clearly, we have
$${{\scr H}_{\!}om}_\D(\D\otimes\LLa^p\g,\D)\cong
\D^\opp\otimes\LLa^p\g^*\cong
\D^\opp\o\LLa^{m-p}\g\otimes\LLa^m\g^*,
\qquad \forall p=0,\ldots.$$
Thus, using resolution \eqref{Koszul}, we deduce
that the object
$R{{\scr H}_{\!}om}_\D(\D/\D{}^{\,}\g^\psi,\D)$ may be representated by the complex
$$
\D^\opp\otimes\LLa^m\g\otimes\LLa^m\g^*\to
\D^\opp\otimes\LLa^{m-1}\g\otimes\LLa^m\g^*\to\ldots\to
\D^\opp\otimes\g\otimes\LLa^m\g^*\to
\D^\opp\otimes\LLa^m\g^*.
$$

But this complex is acyclic in positive degrees
since the corresponding  associated graded complex is
nothing but the resolution \eqref{rezolventa},
tensored by $\LLa^m\g^*$. Further, the cokernel of
the rightmost map $\D^\opp\otimes\g\otimes\LLa^m\g^*\to
\D^\opp\otimes\LLa^m\g^*,$ in the above complex, is
equal to $\D^\opp/\D^\opp{}^{\,}\g^{\delta-\psi}.$
The result follows.
\end{proof}

\begin{corollary} For any $\lambda,\nu\in\bt^*$ and $c\in\C,$
there is a canonical isomorphism 
$$\BD(\CJ_{\lambda,\nu,c})\simeq\omega^{-1}_{\CB\times\CB\times\P}
\otimes_{\CO_{\CB\times\CB\times\P}}\CJ_{-\lambda-2\rho,-\nu-2\rho,-c-n}.
\eqno\Box$$
\end{corollary}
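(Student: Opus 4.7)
The plan is to apply Lemma \ref{duality_lem} directly to $X = \yy_{1,1}$, with the TDO $\D = \D^c_{1,1}$ and the $(\dim\g + 2\dim\bt)$-dimensional Lie algebra $\mathfrak{G} = \g \oplus \bt \oplus \bt$ acting via the Harish-Chandra structure coming from the left $G$-diagonal action and the right $\BT \times \BT$-action on $\yy_{1,1}$. The relevant character is $\psi = (0,\la,\nu): \mathfrak{G} \to \C$, and by definition $\D/\D\mathfrak{G}^{\psi}$ is precisely $\CJ_{\la,\nu,c} = \CJ^{\la,\nu,c}_{1,1}$.

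The hypothesis of Lemma \ref{duality_lem} is the flatness of the combined moment map $T^*\yy_{1,1} \to \g^* \times \bt_{1,1}^{\perp}$, which is exactly Corollary \ref{zwei}(ii). Moreover, the modular character $\delta = \Tr\ad$ of $\mathfrak{G}$ vanishes: $\g = \sv_n$ is semisimple, and $\bt \oplus \bt$ is abelian, so each factor contributes $0$. Lemma \ref{duality_lem} then yields
\[
R{\scr H}om_{\D^c_{1,1}}(\CJ^{\la,\nu,c}_{1,1},\,\D^c_{1,1}) \;\cong\; (\D^c_{1,1})^\opp\big/\bigl((\D^c_{1,1})^\opp\,\g + (\D^c_{1,1})^\opp\,\bt^{-\la,-\nu}\bigr)[-\dim\yy_{1,1}],
\]
concentrated in degree $\dim\yy_{1,1}$, so the same description holds for the top Ext sheaf.

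The remaining task, which is the main technical step, is to identify $(\D^c_{1,1})^\opp$ with the appropriate twisted TDO so that the right-hand side is recognized as $\CJ^{-\la-2\rho,-\nu-2\rho,-c-n}_{1,1}$ up to the canonical-bundle twist built into the definition of $\BD$. Applying the canonical isomorphism $\D_\chi^\opp \cong \D_{\omega - \chi}$ factor by factor in $\D^c_{1,1} = \D_{1,1} \boxtimes \D^c_\P$: the $\P$-factor produces the shift $c \mapsto -c-n$ because $\omega_\P = \CO_\P(-n)$, while the $\tB_{1,1}$-factor, together with conjugation by the pullback of $\omega_{\CB\times\CB}$, shifts the monodromy parameters by $(-2\rho,-2\rho)$ because $\omega_\CB = \CO_\CB(-2\rho)$. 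Combining these twists with the $-\psi$ appearing on the right side of Lemma \ref{duality_lem} and with the $\omega^{-1}_{\CB\times\CB\times\P}$ introduced in the definition of $\BD(\CF) = \omega^{-1}_X \o {\scr E}xt^{\dim X}_\D(\CF,\D)$, we arrive exactly at $\omega^{-1}_{\CB\times\CB\times\P}\otimes \CJ_{-\la-2\rho,-\nu-2\rho,-c-n}$.

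The main obstacle is purely the bookkeeping of TDO-twists: making sure that the conjugations by $\varpi_y$ in the definition \eqref{ddy} of $\D_{x,y}$, the shift from $\omega_\P$ in the $\P$-factor, the modular-character shift $\delta - \psi = -\psi$ from Lemma \ref{duality_lem}, and the $\omega^{-1}$-twist in $\BD$ combine coherently to produce the advertised parameters $(-\la-2\rho,-\nu-2\rho,-c-n)$. Once the conventions are set, this is a short calculation, and the content of the corollary is genuinely the application of the duality lemma combined with Corollary \ref{zwei}(ii).
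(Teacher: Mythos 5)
Your overall strategy --- apply Lemma \ref{duality_lem} and then carry out the TDO-twist bookkeeping --- is exactly what the paper intends (the corollary is marked $\Box$, i.e.\ left as an immediate consequence of that lemma), and your identification of the vanishing modular character is correct. However, there is a genuine inconsistency in the way you feed data into the lemma. You take $\mathfrak{G}=\g\oplus\bt\oplus\bt$, of dimension $\dim\g+2\dim\bt$. But the right $\BT\times\BT$-action on $\yy_{1,1}$ factors through $(\BT\times\BT)/\BT_{1,1}$, so the structure map $\mathfrak{G}\to\D^c_{1,1}$ kills the subalgebra $\bt_{1,1}\sset\bt\oplus\bt$. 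For this $\mathfrak{G}$ the moment map is $T^*\yy_{1,1}\to\g^*\times\bt^*\times\bt^*$, whose image lies in the proper subspace $\g^*\times\bt_{1,1}^\perp$; that map is not flat (it is not even dominant), and the Koszul complex \eqref{Koszul} attached to the full $\mathfrak{G}$ is not a resolution. You then invoke Corollary \ref{zwei}(ii), but that corollary asserts flatness of the moment map $T^*\yy_{1,1}\to\g^*\times\bt_{1,1}^\perp$, which is the moment map of the \emph{quotient} Lie algebra $\g\oplus(\bt\oplus\bt)/\bt_{1,1}$, of dimension $\dim\g+\dim\bt=n^2+n-2=\dim\yy_{1,1}$, not $\dim\g+2\dim\bt$. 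As written, the hypothesis of Lemma \ref{duality_lem} is simply not satisfied, and your stated dimension also puts the Ext in the wrong degree.

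The fix is harmless but must be made explicit: run Lemma \ref{duality_lem} with the effective Lie algebra $\g\oplus(\bt\oplus\bt)/\bt_{1,1}$ (noting that $\D^c_{1,1}\,\mathfrak{G}^\psi$ only sees $\psi$ through its restriction to this quotient, which is defined precisely when $(\la,\nu)\in\bt_{1,1}^\perp$), or, more cleanly, descend from the monodromic picture on $\yy_{1,1}$ to the TDO picture on $\CB\times\CB\times\P$, where the only Lie algebra involved is the diagonal copy of $\g=\sv_n$, with $\dim\g=n^2-1=\dim(\CB\times\CB\times\P)$, and the flatness hypothesis is Proposition \ref{ein moment}(ii). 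In the latter picture your sketch of the bookkeeping is essentially already complete: for $\chi=(\la,\nu,c)$ the opposite TDO $\D_\chi^\opp\cong\D_{\omega-\chi}$ has twist $(-\la-2\rho,-\nu-2\rho,-n-c)$ since $\omega_{\CB\times\CB\times\P}=\CO_\CB(-2\rho)\boxtimes\CO_\CB(-2\rho)\boxtimes\CO_\P(-n)$, the modular character vanishes, and the extra $\omega^{-1}_{\CB\times\CB\times\P}$ comes from the definition of $\BD$.
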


\subsection{Duality for the  Harish-Chandra $\D$-module.}\label{tri}
Recall the automorphism $\tau: \FZ\to\FZ$ induced
by the  isomorphism $\tau: \Ug\to(\Ug)^\opp$, see \S\ref{RGsec}.

\begin{proposition}
\label{twist} 
For any $(\th,c)\in\Spec\FZ\times\C,$ there is a canonical isomorphism
$$\BD(\CG^{\th,c})\simeq\omega^{-1}_\P\otimes_{\CO_\P}
\CG^{\tau(\th),-c-n};$$
\end{proposition}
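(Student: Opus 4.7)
The plan is to compute $\BD\CG^{\th,c}$ directly from the cyclic presentation $\CG^{\th,c} = \D^c_\X/(\D^c_\X\g + \D^c_\X\FZ^c_\th)$ via a Koszul-type resolution, in the spirit of Lemma~\ref{duality_lem}. The Chevalley-Eilenberg-Koszul complex associated to the Lie algebra $\g = \sln$ together with a chosen regular sequence of $n-1$ generators of the maximal ideal $\FZ_\th\subset\FZ$ provides a candidate free $\D^c_\X$-resolution of $\CG^{\th,c}$ built from exactly $\dim\X = n^2+n-2$ relations.

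Granting the resolution, one applies $R{\scr H}om_{\D^c_\X}(-,\D^c_\X)$ and extracts the top cohomology to obtain
\[
{\scr E}xt^{\dim\X}_{\D^c_\X}(\CG^{\th,c},\D^c_\X) \cong (\D^c_\X)^{\opp}\big/\bigl((\D^c_\X)^{\opp}\g + (\D^c_\X)^{\opp}\FZ^c_\th\bigr).
\]
The modular characters of $\g$ (which is semisimple) and of the abelian generators of $\FZ_\th$ both vanish, so no character-shift appears. The next step is to invoke the canonical TDO isomorphism $(\D^c_\X)^{\opp}\cong\D^{-c-n}_\X$, available because $\omega_\X = \omega_G\boxtimes\omega_\P\cong\omega_\P$ (as $\omega_G$ is trivial for the semisimple group $G=SL_n$). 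Under this isomorphism the bi-invariant operator $\lr(z)$ on $G$ is sent to $\lr(\tau(z))$, using the relation $\lr(z\otimes 1) = \lr(1\otimes \tau(z))$ for $z \in \FZ$ arising from the anti-involution $\tau:\Ug\iso(\Ug)^{\opp}$; consequently $\FZ^c_\th = \lr(\FZ_\th)$ maps to $\lr(\tau(\FZ_\th)) = \FZ^{-c-n}_{\tau(\th)}$, so the right-hand side becomes $\CG^{\tau(\th),-c-n}$ as a left $\D^{-c-n}_\X$-module. Applying the $\omega_\X^{-1}=\omega_\P^{-1}$ twist built into the definition of $\BD$ finally produces
\[
\BD\CG^{\th,c} = \omega_\P^{-1}\otimes\CG^{\tau(\th),-c-n},
\]
as desired.

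The main obstacle is justifying the Koszul resolution in the first step: Lemma~\ref{holl} requires that the symbols of $\g$ and of the chosen generators of $\FZ_\th$ form a regular sequence in $\CO_{T^*\X}$, equivalently that a suitable combined moment map be flat. The bare moment map $\mu:T^*\X\to\g^*$ is not flat because $\dim\X\neq\dim G$, so Lemma~\ref{muflat} does not apply directly on $\X$. The way around this is to lift to the horocycle space $\yy_{1,1}$ via Proposition~\ref{kwa}: Corollary~\ref{zwei}(ii) supplies flatness of the extended moment map $T^*\yy_{1,1}\to\g^*\oplus\bt_{1,1}^\perp$, giving the Koszul resolution of $\CJ^{\la,-\la,c}_{1,1}$ that underlies the Corollary to Lemma~\ref{duality_lem}, and the duality statement can then be transported back to $\X$ via the exact functor $p_*q^![-\dim\CB]$.
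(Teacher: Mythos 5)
Your first two paragraphs reproduce the paper's strategy: set $\V:=\g\oplus\fH_\th$, where $\fH_\th\subset\FZ_\th$ is a finite-dimensional space of free generators of the center viewed as an abelian Lie subalgebra of $\Ug$, so that $\dim\V=\dim\X=n^2+n-2$; build the corresponding Chevalley--Eilenberg--Koszul resolution of $\CG^{\th,c}$ over $\dxc$; apply $R{\scr H}_{\!}om_{\dxc}(-,\dxc)$; and track the anti-involution $\tau$ to identify $(\dxc)^\opp$ with $\D^{-c-n}_\X$ and $\FZ^c_\th$ with $\FZ^{-c-n}_{\tau(\th)}$. This matches the paper.

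The gap is in your third paragraph. You are right that the relevant hypothesis is the flatness of the combined moment map $\mu:T^*\X\to(\gr\V)^*$, and that Lemma~\ref{muflat} cannot supply it on $\X$ since $\dim\X\neq\dim G$; but you then conclude that the flatness is unavailable and must be manufactured elsewhere. In fact it \emph{does} hold on $\X$: this is exactly Claim~\ref{cl} of the paper, which is a reformulation of \cite{GG}, Proposition~2.5 (where the map $\mu$ appears under the name $\bmu\times\boldsymbol{\pi}:T^*\X\to\g\times\C^{(n)}$). Once this external input is in place, Lemma~\ref{holl} applies directly to the presentation of $\CG^{\th,c}$ on $\X$, and no passage through $\yy_{1,1}$ is needed.

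The detour you propose is also not a complete substitute as written. Exactness of $p_*q^![-\dim\CB]$ does not imply that it intertwines $\BD$: using $\BD\ccirc p_!\cong p_*\ccirc\BD$ and $\BD\ccirc q^!\cong q^*\ccirc\BD$, one gets $\BD(p_*q^!(-))\cong p_*q^*(\BD(-))$, which differs from $p_*q^!(\BD(-))$ by a shift and a line-bundle twist controlled by the relative dimension of $q$, and this has to be matched carefully against the $\omega$-twist already present in $\BD(\CJ_{\lambda,\nu,c})\simeq\omega^{-1}\otimes\CJ_{-\lambda-2\rho,-\nu-2\rho,-c-n}$ as well as against the $\omega^{\pm1}$-factors and the convolutions with $\CR_!$, $\CR_*$ built into $\ch$ and $\hc$. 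One must also check that the index $-\la-2\rho$ that appears on the dual side corresponds to $\tau(\th)$ under the Harish-Chandra isomorphism; this is true since $\tau(W\cdot\la)=W\cdot(-\la-2\rho)$, but it is another thing to verify. None of this is automatic, so even if the detour can be made to work it would be considerably longer than the paper's direct appeal to Claim~\ref{cl}.
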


\begin{proof}
We choose and fix a finite dimensional vector
subspace $\fH_\th\sset \FZ_\th$ that freely generates
the center, i.e., such the the imbedding $\fH\into \FZ_\th$
induces an algebra isomorphism $\Sym\fH_\th\iso\FZ.$
We will view the vector space $\fH_\th$ as an abelian
Lie subalgebra of the  enveloping algebra $\Ug$.

We put $\V:=\g\oplus \fH_\th$, and view this
direct sum as a filtered Lie algebra,
the direct sum of the Lie algebra $\g$, placed in filtration
degree 1, and $\fH_\th$, and abelian
Lie algebra equipped with filtration induced by the standard filtration on the 
 enveloping algebra $\Ug$.

We imbed $\Ug\into \D(G)$
as left-invariant differential operators.
This imbedding restricts to a filtration preserving 
map $\fH_\th\to\D(G)$. Combining this map with the
natural map $\g\to\D(\X,c)$, induced by the
$G$-action of the group $G$ on itself by left translations,
we get  a filtration preserving Lie algebra
map $\V=\g\oplus \fH_\th\to\D(\X,c)$.

\begin{claim}\label{cl}
The corresponding moment map
$\mu: T^*\X\to (\gr\V)^*$ is flat.
\end{claim}

This claim is a reformulation of \cite{GG}, Proposition 2.5;
the map denoted by $\bmu\times\boldsymbol{\pi}:
T^*\X\to \g\times\C^{(n)}$ in {\em loc cit}  is nothing
but the moment map
$\mu$, of the claim above.

By Claim \ref{cl}, we are in a position to apply Lemma \ref{holl}.
According to the latter, 
the complex
\begin{equation}\label{Kosz}
0\to\dxc\otimes\LLa^{\operatorname{top}}(\g\oplus\fH_\th)
\to\ldots\to
\dxc\otimes\LLa^1(\g\oplus\fH_\th)\to
\dxc\to 0,
\end{equation}
provides a resolution of the left $\dxc$-module
 $\CG^{\th,c}=\dxc/\dxc(\g\oplus\fH_\th).$

We now complete the proof of Proposition \ref{twist}(i).
The algebra  $\Gamma(\P,\D_{\P}^{c})$ of global sections  is the quotient
algebra $\CU_c,$ of $\Ug$. The anti-involution $\tau:\ \Ug\to(\Ug)^\opp$
induces an isomorphism $\tau:\ \CU_c^\opp\simeq\CU_{-n-c}$.
Recall that $\omega_\P=\CO_\P(-n)$. The canonical isomorphism
$(\D_{\P})^\opp\simeq\D_{\P,-n-c}$ coincides with $\tau$ at the level of
global sections. 

 We choose a left-invariant nonvanishing
top degree differential form $\beta$ on $G$. This form is also
right-invariant, and it trivializes the canonical bundle $\omega_{G}$.

The algebra $\D(G)$ is isomorphic to the smash product
$\BC[G]\ltimes\Ug$ (we embed $\Ug$ into $\D(G)$ as left-invariant
differential operators).
Using the trivialization of $\omega_{G}$ we get a canonical isomorphism $\D(G)^\opp\simeq\D(G)$.
For $h\in\BC[G]$, and $x\in\g\subset\Ug$, the action of the
anti-involution is described as $h\otimes1\mapsto h\otimes1,\
1\otimes x\mapsto-1\otimes x$. In particular, the anti-involution 
restricts to $\tau$ on $\Ug$. So we keep the name $\tau$ for the 
anti-involution of $\D(G)$. More generally, for any vector field
$ v$ on $G$ we have 
$\tau( v)=- v+\frac{{\operatorname{Lie}}_ v\beta}{\beta}$, 
cf.~\cite{K},~2.7.1. In particular, if $ v$ is a {\em right}-invariant
vector field, we have $\tau( v)=- v$. Moreover, the adjoint action of 
$G$ on itself gives rise to the embedding $\ad:\ \g\hookrightarrow
\D(G)$, and we have $\tau(\ad(x))=-\ad(x)=\ad(-x)$.

To compute 
$\BD(\CG^{\th,c})$, the dual of the Harish-Chandra module,
we  use the free 
$\dxc$-resolution ~\eqref{Kosz}.
Choose a trivialization
$\LLa^\Top(\g\oplus\fH_\th)\simeq\BC$ of the one dimensional vector
space $\LLa^\Top(\g\oplus\fH_\th)$. Hence we obtain a perfect
pairing 
$$\LLa^k(\g\oplus\fH_\th)\times\LLa^{n^2+n-2-k}(\g\oplus\fH)
\to\BC.
$$

We apply the functor
${\scr H}^{\!}om_{\dxc}(-,\dxc)$ to the resolution ~\eqref{Kosz}. Thus,
we see that the object
$R{\scr H}^{\!}om_{\dxc}(\CG^{\th,c},\dxc)$ is represented
by the following complex of {\em right}
$\dxc$-modules
$$
0\to\LLa^{\operatorname{top}}(\g\oplus\fH)\otimes\dxc
\to\ldots\to
\LLa^1(\g\oplus\fH)\otimes\dxc\to
\dxc
$$
arising from the action of $\g\oplus\fH\subset\dxc$
on $\dxc$ by the {\em left} multiplication.
The above complex is acyclic everywhere except 
the rightmost term, by Claim \ref{cl} again.

Combining this description of the right $\dxc$-module
${{\scr E}xt}{}^{n^2+n-2}_{\dxc}(\CG^{\th,c},
\dxc)$ with the above description of the isomorphism
$$\tau\boxtimes\tau:\ (\D_\X^c)^\opp=\D_G^\opp\boxtimes
(\D_{\P}^{c})^\opp\simeq\D_G\boxtimes\D_{\P,-n-c}=\D_{\X,-n-c}$$
we see that 
$$
(\tau\boxtimes\tau)\left({{\scr E}xt}{}^{n^2+n-2}_{\dxc}
(\CG^{\th,c},\dxc)\right)\simeq\CG^{\tau(\th),-n-c}
$$

We conclude that $\dis
\BD(\CG^{\th,c})\simeq\CO_\P(n)\otimes_{\CO_\P}
\CG^{\tau(\th),-n-c}.
$ The proposition is proved.
\end{proof}

From Claim \ref{cl}, using Lemma \ref{holl}, we deduce
\begin{corollary} For any $(\th,c)\in \Spec\FZ\times\C,$
the characteristic cycle $[SS(\CG^{\th,c})]$ equals $[\mu\inv(0)]$,
the
cycle of the scheme-theoretic zero fiber of the moment map $\mu: T^*\X\to (\gr\V)^*$.\qed
\end{corollary}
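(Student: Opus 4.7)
The plan is to identify $\gr\CG^{\th,c}$ with the structure sheaf of the scheme-theoretic zero fiber $\mu^{-1}(0)$ and then read off the characteristic cycle. I would start from the presentation $\CG^{\th,c} = \dxc/\dxc\cdot\V$ with $\V := \g\oplus\fH_\th$ established in the proof of Proposition \ref{twist}, together with its natural filtration ($\g$ in degree one, $\fH_\th$ inheriting the order filtration from $\Ug$). By Claim \ref{cl} the associated moment map $\mu: T^*\X\to(\gr\V)^*$ is flat; since $T^*\X$ and $(\gr\V)^*$ are both smooth, flatness is equivalent to $\mu^{-1}(0)$ being a complete intersection of codimension $\dim\V$, equivalently to the symbols $\{\sigma(\arr{u}) : u\in\V\}$ forming a regular sequence in $\gr\dxc\cong p_*\CO_{T^*\X}$.

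With this regular sequence in hand, I would apply Lemma \ref{holl} to produce both a Koszul resolution of $\CG^{\th,c}$ by free $\dxc$-modules and the bijection \eqref{holl_iso}, which in the present situation takes the form of a canonical isomorphism $\gr\CG^{\th,c}\iso p_*\CO_{\mu^{-1}(0)}$. Crucially, $\CO_{\mu^{-1}(0)}$ here is equipped with its full scheme-theoretic structure, since the defining ideal is generated explicitly by the symbols of the elements of $\V$. The filtration on the cyclic module $\CG^{\th,c}$ induced from the order filtration on $\dxc$ is automatically good, so the characteristic cycle $[SS(\CG^{\th,c})]$ coincides with the support cycle $[\ssup(\gr\CG^{\th,c})]$; combined with the equidimensionality of $\mu^{-1}(0)$ (another consequence of flatness), this yields the desired equality $[SS(\CG^{\th,c})] = [\mu^{-1}(0)]$.

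The proof is essentially a direct consequence of the two cited results, with no serious obstacle. The only point worth being careful about is the matching of multiplicities rather than merely supports, and this is supplied by the fact that Lemma \ref{holl} produces a genuine resolution whose associated graded computes $\gr\CG^{\th,c}$ on the nose, thereby respecting the scheme structure on $\mu^{-1}(0)$ rather than just its reduction.
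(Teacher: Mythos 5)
Your proof is correct and follows exactly the paper's approach: the paper's one-line justification ("From Claim \ref{cl}, using Lemma \ref{holl}, we deduce...") is precisely the chain flatness $\Rightarrow$ regular sequence of symbols $\Rightarrow$ isomorphism \eqref{holl_iso} identifying $\gr\CG^{\th,c}$ with $p_*\CO_{\mu^{-1}(0)}$ $\Rightarrow$ equality of cycles, which you spell out faithfully, including the point that \eqref{holl_iso} respects the scheme structure on $\mu^{-1}(0)$ and hence matches multiplicities and not merely supports.
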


\subsection{Reminder on affine Hecke algebras}\label{eee}
Recall that $\BT=(\C^\times)^{n-1}$ stands for the  abstract Cartan torus of the group $SL_n$.
Let $\TV$ denote
the dual torus, so that
$\Hom(\TV,\C^\times)=\Hom(\C^\times,\BT)$
is a lattice in $\bt$.
The symmetric group $W=\syn$ acts naturally on $\BT,\ \TV$,
hence also on $\C[\TV]$, a  Laurent polynomial ring.
Let $\C[\TV/ W]=\C[\TV]^ W\sset\C[\TV] $ denote the subalgebra
of $ W$-invariants.

Associated with any $\Th\in\TV/W$, there is
a maximal ideal in  $\C[\TV]^W$.
We let $\BI_\Th\sset\C[\TV]$ denote the ideal
generated by that
maximal ideal of the subalgebra $\C[\TV]^W$.
The quotient,
$R_\Th:=\C[\TV]/\BI_\Th,$ called  `{\em coinvariant algebra}',  is a vector space
of dimension $n!$ that comes equipped with the regular representation of the group
$ W$.

Given a complex number $q\in\C^\times,$
let $\hec$ be the  affine Hecke algebra of type $\mathbf{A}_{n-1},$ 
modelled on  $ W\ltimes \Hom(\TV,\C^\times)$,
an affine Weyl group. Thus, there is a standard  commutative subalgebra
$\C[\TV]\sset \hec,$
the Bernstein subalgebra, such that 
the corresponding $ W$-invariants,
$\C[\TV]^ W\sset\C[\TV]\sset\hec,$ form the center
of  the  affine Hecke algebra.
There is a  natural  $\C[\TV]^ W$-linear action 
of the algebra $\hec$ on  $\C[\TV]$
via so-called Demazure-Lusztig operators, cf. eg. \cite{CG}, ch.~7.
In particular, for any  $\Th\in\TV/W$, the coinvariant algebra
$R_\Th=\C[\TV]/\BI_\Th$ inherits an  $\hec$-module structure.

Let $\treg\sset \BT=(\C^\times)^{n-1}$ be the
complement of the big diagonal, the subset of 
points with pairwise distinct coordinates.
Let  $\xreg\sset\X=SL_n\times\P,$ be an open subset formed
by the pairs $(g,\ell),$ such that $g\in G$, is a regular semisimple element,
and such that 
the line $\ell$ is {\em cyclic}
for $g$, i.e., such that we have $\C[g]\ell=\C^n$. 
We write $\supp: \xreg\onto\treg/ W,\ (g,\ell)\mto\Spec(g),$ 
for the map that assigns to a pair $(g,\ell)$
the unordered $n$-tuple of eigenvalues of the matrix $g$.

The fundamental group of the space $\treg/W$ is known to be  the  affine braid group $\baf$.
Thus, choosing a base point $x\in \treg/W$, for any $q\in\C^\times,$ we have a diagram
\beq{bafmap}
a:\ \pi_1(\treg/ W,x)=\baf\too\hec,
\eeq

Given a pair $(\Th,q)\in\TV/W\times\C^\times,$
let $a^*(R_\Th)$ be the pull-back of
the  $\hec$-module $R_\Th$ via the map
 \eqref{bafmap}.
Associated with  $a^*(R_\Th)$,
one has a local system on $\treg/ W$,
and we write  $\CR_{\Th,q}$ for  the pull-back of
the latter local system to $\xreg$ via the map $\supp$.

\subsection{Monodromy conjecture}
According to \cite{FG}, Proposition 3.2.3, there
is a canonical rational $G$-invariant section $\bff$, of the line bundle
 $\omega_\X^{\o 2},$ such that $\bff$ has neither zeros nor poles on 
the open set $\xreg$. 
Observe further that, for any $c\in\C$ and any differential operator
$u$ on $\xreg$, the formula $\bff^{-c}\ccirc u\ccirc \bff^c$ gives
a well defined  twisted differential operator
$\bff^{-c}\ccirc u\ccirc \bff^c\in \D^{2nc}_\X|_{\xreg}$. It follows that
the assignment $u\mto \bff^{-\frac{c}{2n}}\ccirc u\ccirc \bff^{\frac{c}{2n}}$
induces an isomorphism $\D_\X|_{\xreg}\iso \D^c_\X|_{\xreg}$, of  TDO.

We conclude that, given a $\D^c_\X$-module $\CM$, one may view 
$\CM|_{\xreg},$ the restriction of $\CM$ to the open set $\xreg$,
as a $\D_\xreg$-module via the above isomorphism of  TDO.

The  map
  $\bt^*\onto\bt^*/\bt^*_\Z=\TV$
induces  a canonical projection
$\bt^*/W\onto\TV/W$.

We call a complex number $c\in\C$ `{\em good}' if it is
not a negative rational number of the form $c=-p/m$ where
$2\leq m\leq n,\ 1\leq p\leq m,$ and $(p,m)=1$.

\begin{conjecture}\label{factor} Let $c\in\C$ be good.
 We  put $q:=\exp(2\pi ic)$,
and let $\Th\in \TV/W$ be the image of
an element $\th\in\bt^*/W$ under the canonical projection.

Then,  the locally constant sheaf
on $\xreg$ associated with  the $\D$-module $\CG^{\th,c}|_{\xreg}$
via the Riemann-Hilbert correspondence
is isomorphic to $\CR_{\Th,q}$.
\end{conjecture}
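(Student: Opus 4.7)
\emph{Strategy.} The plan is to combine the geometric presentation
$\CG^{\th,c}=p_*q^!(\CJ^{\la,-\la,c}_{1,1})[-\dim\CB]$ of Proposition~\ref{kwa}
with a direct monodromy computation on $\xreg$. The key observation is that
$\supp:\xreg\to\treg/W$ is a fibration with connected fibers, so $\pi_1(\xreg)$
surjects onto $\baf=\pi_1(\treg/W)$; hence any local system on $\xreg$
pulled back from $\treg/W$ is determined by its $\baf$-monodromy. It thus
suffices to prove that $\CG^{\th,c}|_{\xreg}$ is a local system, that its
monodromy factors through $\pi_1(\xreg)\onto\baf$, and that the resulting
$\baf$-representation is the pullback via the map $a$ of \eqref{bafmap} of the
Hecke module $R_\Th$.

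\emph{Fiberwise description.} I would restrict the correspondence \eqref{maps}
to $\xreg$. The fiber of $p$ over a point $(g,\ell)\in\xreg$ is $\CB$, and
since $g$ is regular semisimple the set of $g$-fixed flags is a principal
$W$-torsor of cardinality $n!$. Using flatness of the moment map
(Lemma~\ref{muflat}) and the explicit monodromic structure of
$\CJ^{\la,-\la,c}_{1,1}$, one should show that
$p_*q^!(\CJ^{\la,-\la,c}_{1,1})[-\dim\CB]|_{\xreg}$ is concentrated in
cohomological degree zero, and that its stalk at $(g,\ell)$ is canonically
identified with the coinvariant algebra $R_\Th$, with $W$ acting through the
$W$-action on $g$-fixed flags. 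This already matches the rank of $\CR_{\Th,q}$
and its underlying $W$-module structure.

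\emph{Monodromy.} To upgrade the $W$-module structure to an $\hec$-module
structure, I would analyze two types of loops in $\baf$. For a simple braid of
two neighboring eigenvalues of $g$, reduce by a transverse slice near the
codimension-one stratum $\{t_i=t_{i+1}\}\subset\treg/W$ to a local model on
$SL_2\times\P^1$; there one should check by hand that the half-monodromy equals
a Demazure--Lusztig operator with parameter $q=\exp(2\pi ic)$, the factor $q$
arising from the twist along the $\P$-factor and the cyclicity condition
ensuring that the local geometry is non-degenerate. For the lattice part
$\Hom(\C^\times,\BT)\subset\baf$, corresponding to looping the eigenvalues of
$g$ around $0\in\C^\times$ by a cocharacter, the monodromy is produced by the
$\BT\times\BT$-monodromy of $\CJ^{\la,-\la,c}_{1,1}$, which encodes the
character $\Th\in\TV/W$ as the image of $\th\in\bt^*/W$ under the canonical
projection $\bt^*/W\onto\TV/W$.

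\emph{Main obstacle.} The principal difficulty is carrying out the
$SL_2\times\P^1$ local calculation cleanly and showing that it yields the
exact Demazure--Lusztig formula rather than merely an equivalent
two-dimensional representation of the type-$\mathbf A_1$ Hecke algebra. The
goodness hypothesis on $c$ is expected to enter here: by Remark~\ref{masaki},
it corresponds to the absence of bad roots of the $b$-function associated with
the section $s$ of Lemma~\ref{s}, which is exactly what prevents
$\CJ^{\la,-\la,c}_{1,1}|_{\xreg}$ from degenerating in a way that would either
destroy the rank-$n!$ assertion or obstruct the factorization of the monodromy
through $a$. A secondary task is checking that the isomorphism of
Conjecture~\ref{factor} is canonical, which should follow from a choice of
base point and a coherent normalization of the $g$-fixed flag identification
transported by the braid group action.
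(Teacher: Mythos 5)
The statement you are attempting to prove is explicitly labeled a \emph{conjecture} in the paper, and the paper does not prove it; there is no ``paper's own proof'' to compare against. What the paper does, in the discussion following Conjecture~\ref{factor}, is reduce the conjecture to a question about Knizhnik--Zamolodchikov connections: the Hamiltonian reduction $\BH(\CG^{\th,c})=\Gamma(\X,\CG^{\th,c})^{\sln}$ carries an $e\sH_\kappa e$-module structure, and for good $c$ the Bezrukavnikov--Etingof Morita equivalence transports this to the $\sH_\kappa$-module $\CP_{\th,c}=\sH_\kappa\otimes_{\C[\TV]\rtimes W}R_\Th$ of \eqref{CP}; describing the de Rham local system of $\CG^{\th,c}|_\xreg$ is then declared equivalent to computing the monodromy of $\KZ(\CP_{\th,c})$, i.e.\ of the KZ connection in the representation $R_\Th$. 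The paper then cites Cherednik and Opdam for the monodromy of that connection, but notes explicitly that those results cover only ``sufficiently general'' $c$, not all good $c$ --- hence the status of conjecture.

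Your proposal takes a genuinely different, purely geometric route. Rather than reducing to Cherednik algebras, you restrict the correspondence \eqref{maps} and the presentation $\CG^{\th,c}=p_*q^!(\CJ^{\la,-\la,c}_{1,1})[-\dim\CB]$ of Proposition~\ref{kwa} to $\xreg$, argue that the fibers of $p$ meet the support of $q^!\CJ^{\la,-\la,c}_{1,1}$ in the $n!$ flags fixed by a regular semisimple $g$, and then try to read off the Hecke-module structure from local monodromy. This is attractive in that it bypasses the Morita machinery and the unproven KZ-monodromy input, so if it worked it would actually prove more than the paper claims. However it leaves two real gaps beyond the $SL_2\times\P^1$ local model you already flag. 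First, to factor the monodromy through $\pi_1(\xreg)\onto\baf$ you must show that the restriction of $\CG^{\th,c}$ to the fibers of $\supp$ has trivial monodromy; this is essentially the assertion that $\CG^{\th,c}|_\xreg$ is pulled back from $\treg/W$, which is exactly what the paper's Hamiltonian-reduction setup supplies and which does not follow from the fibration statement alone. Second, your explanation of the ``good'' hypothesis via Remark~\ref{masaki} is speculative and does not match the paper: the goodness condition on $c$ comes from \cite{BE} (Morita equivalence of $\sH_\kappa$ and $e\sH_\kappa e$), not from roots of a $b$-function, and there is no claim in the paper that those two conditions coincide; in your geometric approach the role of goodness would have to be rederived from scratch. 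So the proposal is an interesting alternative strategy, not a proof, and it should not be presented as recovering an argument the paper contains.
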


Let $\mathsf{H}_\kappa$ be the trigonometric Cherednik algebra
with parameter $\kappa=c/n$,
and let $e\mathsf{H}_\kappa e$ denote the
corresponding spherical subalgebra, cf. \cite[\S5]{FG}.
The condition that $c$ be good insures, by  \cite{BE},
that the algebras  $\mathsf{H}_\kappa$ and
$e\mathsf{H}_\kappa e$ are  Morita equivalent, cf. \cite[Proposition
3.1.3]{FG}.

Recall further that, according to \cite{GG} and \cite{FG},
the space $\BH(\CG^{\th,c}):=\Gamma(\X,\CG^{\th,c})^{\sln}$,
called the {\em Hamiltonian reduction} of the
$\D$-module $\CG^{\th,c}$,
has a natural $e\mathsf{H}_\kappa e$-module structure.
It is clear that describing the de Rham local
system of the $\D$-module
$\CG^{\la,c}|_{\xreg}$ amounts to
studying the monodromy of the corresponding
$\D$-module
$\D(\treg/ W)\bigotimes_{e\mathsf{H}_\kappa e}\BH(\CG^{\th,c}),$
on $\treg/ W.$

Assume that $c$ is good, so the above mentioned  Morita equivalence
holds. Then,
the latter problem is  equivalent
 to a similar
problem for the $\sH_\kappa$-module, $\CP_{\th,c}$,
that  corresponds to the $e\mathsf{H}_\kappa e$-module $\BH(\CG^{\th,c})$
via the  Morita equivalence. 

The $\sH_\kappa$-module $\CP_{\th,c}$ has been
computed in \cite{GG}, Lemma 7.5 in the rational case. In the
trigonometric
setting of the present section, the
corresponding result reads
\begin{equation}\label{CP}\CP_{\th,c}=
\hh_\kappa\otimes_{\C[\TV]\rtimes W} R_\Th,
\end{equation}
is the $\hh_\kappa$-module induced
from the representation $R_\Th$, of the subalgebra
$\C[\TV]\rtimes W.$ The corresponding local system
on  $\treg/ W$, comes from an $ W$-equivariant
$\D$-module,
$\KZ(\CP_{\th,c})$, on $\treg$,
cf. \cite{GGOR} for details about
the functor $\KZ$. The latter $\D$-module is nothing 
but the Knizhnik-Zamolodchikov connection in the
trivial vector bundle 
on $\treg$ associated with the representation $R_\Th;$
the formula for the connection can be found e.g. in
\cite{FV}, \S 3.1.
 
Thus, we conclude that our original
problem about the monodromy of the
Harish-Chandra $\D$-module $\CG^{\th,c}|_\xreg$
is equivalent to a similar problem 
for  the Knizhnik-Zamolodchikov connection in
the representation $R_\Th.$

For general enough $c\in\C$, the monodromy of
the Knizhnik-Zamolodchikov connection that arises from
\eqref{CP} has been studied in
\cite{Ch1}, Theorem 3.3; \cite{Ch2}, Theorem 3.6, and also
 \cite{Op}, Corollary ~6.9. The results in {\em loc cit}
confirm that our Conjecture \ref{factor} does hold
for sufficiently general parameters $c\in \C$.

\begin{remark} Let
$\dis \CP'_c=\hh_\kappa\otimes_{\C[\TV]\rtimes W} \C[ W],$
be an $\hh_\kappa$-module induced from the
regular representation of the group $ W$,
equipped with the `trivial' $\C[\TV]$-action, via the morphism $\C[\TV]\to\C,\, P\mto ~P(1)$.

We note that an analogue of our
 monodromy conjecture, with
  the  $\hh_\kappa$-module in \eqref{CP} being replaced
by the  $\hh_\kappa$-module $\CP'_c$, is known to be {\em false}, in general.
\end{remark}


\setcounter{equation}{0}
\vskip -10mm
\small{

}
\bigskip
\medskip
\footnotesize{\pb{{\bf M.F.}: IMU, IITP,
 and State University Higher School of Economics,
Dept. of Mathematics, 20 Myasnitskaya st.
Moscow 101000 Russia;\\
\hphantom{x}\enspace\en {\tt fnklberg@gmail.com}}}
\bigskip

\footnotesize{\pb{{\bf V.G.}: 
Department of Mathematics, University of Chicago, 
Chicago IL 60637, USA;\\
\hphantom{x}\en\en {\tt ginzburg@math.uchicago.edu}}}

\end{document}